\renewcommand*\env@matrix[1][*\c@MaxMatrixCols c]{%
  \hskip -\arraycolsep
  \let\@ifnextchar\new@ifnextchar
  \array{#1}}
\DeclareSymbolFont{bbold}{U}{bbold}{m}{n}
\DeclareSymbolFontAlphabet{\mathbbold}{bbold}
\g@addto@macro\bfseries{\boldmath}
\newcommand{\comment}[1]{} % comment-environment (for multi-line comments)
\newcommand{\IN}{\mathbb{N}}
\newcommand{\IP}{\mathbb{P}}
\newcommand{\IQ}{\mathbb{Q}}
\newcommand{\IZ}{\mathbb{Z}}
\newcommand{\inv}{^{-1}}
\newcommand{\tensor}{\otimes}
\newcommand{\mf}[1]{\mathfrak{#1}}
\newcommand{\mc}[1]{\mathcal{#1}}
\newcommand{\ve}{\varepsilon}
\newcommand{\units}[1]{U(#1)}
\renewcommand{\d}{\partial}
\DeclarePairedDelimiter{\abs}{\lvert}{\rvert}
\DeclarePairedDelimiter{\erz}{\langle}{\rangle}
\DeclarePairedDelimiter{\merz}{[}{]}
\newcommand{\nset}[2][1]{\set{#1,\ldots,#2}}
\newcommand{\set}[1]{\left\{#1\right\}}
\newcommand{\implication}[2]{\item[\normalfont\textit{(#1)}$\Rightarrow$\textit{(#2)}:]}
\DeclareMathOperator{\LSat}{LSat}
\DeclareMathOperator{\im}{im}
\DeclareMathOperator{\Quot}{Quot}
\DeclareMathOperator{\Tor}{Tor}
\DeclareMathOperator{\chara}{char}
\DeclareMathOperator{\GKdim}{GKdim}
\DeclareMathOperator{\trdeg}{tr.deg}
\let\PROOF=\proof
\renewcommand\proof{\PROOF[\textbf{Proof:}]} % customized proof-environment
\declaretheorem[name=Theorem,numberwithin=section]{thm}
\declaretheorem[name=Proposition,sibling=thm,refname={proposition,propositions}]{prop}
\declaretheorem[name=Lemma,sibling=thm]{lemma}
\declaretheorem[name=Corollary,sibling=thm]{cor}
\declaretheorem[name=Definition,sibling=thm,style=definition]{definition}
\declaretheorem[name=Remark,sibling=thm,style=definition]{rem}
\declaretheorem[name=Example,sibling=thm,style=definition]{ex}
\let\MOD=\mod
\renewcommand{\mod}[1]{\!\!\MOD{#1}}
\title{Left saturation closure for Ore localizations}
\author{Johannes Hoffmann and Viktor Levandovskyy}
\begin{document}

\maketitle

\tableofcontents

\addsec{Abstract}

In this paper, we introduce the notion of $\LSat$, the left saturation closure of a subset of a module at a subset of the base ring, which generalizes multiple important concepts related to Ore localization.
We show its significance in finding a saturated normal form for left Ore sets as well as in characterizing the units of a localized ring.

Furthermore, $\LSat$ encompasses the notion of local closure of submodules and ideals from the realm of algebraic analysis, where it describes the result of extending a submodule or ideal from a ring to its localization and contracting it back again.

\section{Introduction}

Localizing a commutative ring $R$ at a multiplicative set $S$ is an important and well-understood tool in the study of commutative rings.
For non-commutative domains, the concept of Ore localization introduced by {\O}ystein Ore (\cite{ore}) is a generalization that retains most of the properties of classical localization at the cost of additional requirements for the set $S$: apart from being a multiplicative set, we need $S$ to satisfy the left Ore condition, that is, for any $r\in R$ and $s\in S$ we need to have $Sr\cap Rs\neq\emptyset$.

The starting point for our work was the following phenomenon that already appears in the commutative setting: consider the polynomial ring $R=K[x]$ over a field $K$ and the multiplicative set $S$ consisting of all non-negative powers of $x^2$.
In the localization $S\inv R$, we have the element $\frac{x}{x^2}$, which, intuitively, should be the same as $\frac{1}{x}$.
But the latter is not a well-defined object in $S\inv R$ since $x\notin S$.
To solve this problem we can consider the set $\tilde{S}$ of all non-negative powers of $x$ and the associated localization $\tilde{S}\inv R$, where we indeed have $\frac{x}{x^2}=\frac{1}{x}$.
It is easy to see that the localizations $S\inv R$ and $\tilde{S}\inv R$ are isomorphic, but the latter one seems to be a better description of the actual denominators occurring in the localization.

This gave rise to the following question: given a left Ore set $S$ in a (non-commutative) domain $R$, is there a standardized representation of the localization $S\inv R$ via another Ore set $\tilde{S}\supseteq S$ such that $S\inv R\cong\tilde{S}\inv R$ and the phenomenon above does not manifest in $\tilde{S}\inv R$?
From the example we can already read off a necessary condition: the set $\tilde{S}$ should be saturated in the sense that if $st\in\tilde{S}$, then also $s,t\in\tilde{S}$.

For commutative rings, the answer to this question has been known for a while: Zariski and Samuel (\cite{zariski_samuel}) showed that the set $\tilde{S}$ consisting of all elements of $R$ that divide an element of $S$ satisfies the requirements stated above.
Moreover, they noted that the units in $S\inv R$ are exactly the fractions with a numerator from $\tilde{S}$, which shows that our original question is connected to the problem of identifying the units of the localization.

More recently, Bavula (\cite{bavula}) demonstrated that taking the inverse image of the unit group of the localization under the canonical embedding of the base ring yields a saturated Ore set that describes the same localization.
While this essentially answers the question stated above, we were interested in a more constructive description of this set and set out to uncover the theory behind it.

In this paper, after briefly recalling the construction and some important properties of Ore localization, we introduce the general concept of left saturation, more precisely, we define $\LSat_T(P)$, the left $T$-saturation of $P$, where $T$ is a subset of $R$ and $P$ is a subset of a left $R$-module $M$.
This rather technical notion encompasses at least two important concepts connected to Ore localization that emerge when we specialize the parameters $T$ and $P$.

The first application considers $\LSat(S):=\LSat_R(S)$, the left saturation closure of a left Ore set $S$.
We show that $\LSat(S)$ indeed has the desired properties stated above, in particular, given $S$, $\LSat(S)$ gives us full insight into the structure of the localization $S\inv R$ as we have a complete description of the denominators as well as the units.

The second application arises when we consider $P^S:=\LSat_S(P)$ for a left Ore set $S$ and a submodule $P$.
Then $P^S$ is called (left) $S$-closure (or local closure) of $P$.
This object appears prominently in algebraic analysis, where it describes the result of extending a submodule or ideal from a ring to its localization and contracting it back again.

Throughout this paper, we will give examples to illustrate the concepts.

\section{Preliminaries}

All rings are assumed to be associative, unital and non-trivial (i.e., not the zero ring).
We will need the following properties:

\begin{definition}
	Let $S$ be a subset of a ring $R$.
	We call $S$
	\begin{itemize}
		\item
			a \emph{multiplicative set} if $1_R\in S$, $0_R\notin S$ and for all $s,t\in S$ we have $st\in S$.
		\item
			\emph{left saturated} (resp. \emph{right saturated}) if for all $s,t\in S$, $st\in S$ implies $t\in S$ (resp. $s\in S$).
		\item
			\emph{saturated} if $S$ is both left and right saturated.
	\end{itemize}
\end{definition}

Recall that a ring is called a \emph{domain} if for all $a,b\in R$, $a\cdot b=0$ implies $a=0$ or $b=0$.
Any subset (not containing zero) of a domain has a multiplicative superset which is minimal with respect to inclusion:

\begin{definition}
	Let $R$ be a domain.
	The \emph{multiplicative closure} or \emph{monoid closure} of a subset $M$ of $R\setminus\set{0}$ is $\merz{M}:=\set{\prod_{i=1}^{n}m_i\mid n\in\IN_0,m_i\in M}$, where the empty product equals $1_R$.
	%If $M=\set{m_1,\ldots,m_k}$, we simply write $\merz{m_1,\ldots,m_k}=\merz{M}$.
\end{definition}

To decide whether one multiplicative set is contained in another, it suffices to check this on generators:

\begin{lemma}\label{inclusions_of_multiplicatively_closed_sets_on_generators}
	Let $S,T$ be multiplicative sets in a ring $R$.
	Furthermore, let $I$ be an arbitrary index set and $s_i\in S$ for all $i\in I$ such that $S=\merz{\set{s_i\mid i\in I}}$.
	Then $S\subseteq T$ if and only if $s_i\in T$ for all $i\in I$.
\end{lemma}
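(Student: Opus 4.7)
The plan is to handle the two implications separately, with the forward direction being essentially immediate and the backward direction being a straightforward induction on the length of a monoid word.

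For the forward direction, I would simply note that each $s_i$ lies in $S$ by assumption, so if $S\subseteq T$ then $s_i\in T$ for every $i\in I$. This requires no further argument.

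For the backward direction, assume $s_i\in T$ for all $i\in I$. Take an arbitrary $s\in S$. By the hypothesis $S=\merz{\set{s_i\mid i\in I}}$, there exist $n\in\IN_0$ and indices $i_1,\ldots,i_n\in I$ such that $s=\prod_{j=1}^{n}s_{i_j}$. I would then split into two cases according to $n$. If $n=0$, the empty product convention gives $s=1_R$, which lies in $T$ because $T$ is a multiplicative set. If $n\geq 1$, then each factor $s_{i_j}$ lies in $T$ by assumption, and since $T$ is closed under multiplication (again by being a multiplicative set), a trivial induction on $n$ yields $s=\prod_{j=1}^{n}s_{i_j}\in T$. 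Hence $S\subseteq T$.

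I do not expect any genuine obstacle here; the statement is essentially a restatement of the universal property of the monoid closure $\merz{\cdot}$, and the only subtlety worth mentioning explicitly is the $n=0$ case, which is why one needs $1_R\in T$ (guaranteed by $T$ being multiplicative) rather than merely closure of $T$ under products. It is also worth noting that the assumption $0_R\notin T$ plays no role in this argument, so the lemma would remain true for any subset $T$ containing $1_R$ and closed under multiplication.
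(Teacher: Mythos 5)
Your proof is correct and is exactly the routine argument the paper had in mind (the lemma is in fact stated without proof there): the forward direction is immediate from $s_i\in S$, and the backward direction is induction on the length of the monoid word, with the $n=0$ case covered by $1_R\in T$. Your closing observation that only $1_R\in T$ and closure under products are needed is accurate and harmless.
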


Recall that an element $u$ of a ring $R$ is called \emph{invertible} or a \emph{unit} if there exists $v\in R$ such that $uv=1=vu$.
The set $U(R)$ of all units of $R$ forms a multiplicative group and is therefore a multiplicative set in $R$.

\begin{lemma}\label{unit_group_is_saturated}
	Let $R$ be a domain.
	Then $U(R)$ is saturated.
\end{lemma}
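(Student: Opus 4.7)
The plan is to verify saturation directly from the definitions by showing that if $s,t\in R$ satisfy $st\in U(R)$, then $s\in U(R)$ and $t\in U(R)$; this establishes both left and right saturation at once. I would start by letting $w:=(st)\inv$, so that the two identities $s(tw)=1$ and $(ws)t=1$ hold. This immediately exhibits $tw$ as a right inverse of $s$ and $ws$ as a left inverse of $t$, reducing the problem to the standard fact that in a domain a one-sided inverse is automatically two-sided.

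For the upgrade to a two-sided inverse, I would argue as follows. From $s(tw)=1$, right-multiplying by $s$ gives $s(tw)s=s$, hence $s\bigl((tw)s-1\bigr)=0$. Since $st\in U(R)$ implies $st\neq 0$ and thus $s\neq 0$ (using that $R$ is a domain), cancellation forces $(tw)s=1$, so $tw$ is a genuine two-sided inverse of $s$. The analogous computation $(ws)t\cdot t=t$ followed by $\bigl(t(ws)-1\bigr)t=0$ and $t\neq 0$ yields $t(ws)=1$, showing $t\in U(R)$ as well.

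The main (quite mild) obstacle is simply to identify where the domain hypothesis enters, since saturation of $U(R)$ can fail in arbitrary rings where one-sided units exist that are not units. Beyond that, the argument is a short bookkeeping of left/right inverses and an application of the cancellation property of domains.
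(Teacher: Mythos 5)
Your argument is correct and follows essentially the same route as the paper: extract a right inverse of $s$ and a left inverse of $t$ from $(st)\inv$, then use that in a domain a one-sided inverse is two-sided (the paper simply invokes this fact, which it elsewhere attributes to domains being Dedekind-finite, while you spell out the cancellation argument). Note only the small typo in the second computation: the intermediate identity should read $t\cdot(ws)t=t$ rather than $(ws)t\cdot t=t$, which is what your subsequent line $\bigl(t(ws)-1\bigr)t=0$ actually uses.
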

\begin{proof}
	Let $a,b\in R$ such that $a\cdot b\in U(R)$.
	Then there exists $u\in U(R)$ such that $abu=1=uab$, thus $a$ is right-invertible and $b$ is left-invertible.
	Since $R$ is a domain this implies that $a,b\in U(R)$.
\end{proof}

In a commutative domain, we can localize at any multiplicative subset, whereas in the non-commutative setting, we additionally need the Ore condition:

\begin{definition}
	Let $S$ be a subset of a domain $R$.
	\begin{itemize}
		\item
			We say that $S$ satisfies the \emph{left Ore condition} in $R$ if for all $s\in S$ and $r\in R$ there exist $\tilde{s}\in S$ and $\tilde{r}\in R$ such that $\tilde{s}r=\tilde{r}s$.
		\item
			We call $S$ a \emph{left Ore set} in $R$ if it is a multiplicative set and satisfies the left Ore condition in $R$.
	\end{itemize}
\end{definition}

If $S$ satisfies the left Ore condition, then any finite selection of elements from $S$ has a common left multiple in $S$.
This is utilized to find common denominators when localizing at $S$.

\begin{rem}
	Let $K$ be a field and $\mc{D}=K\erz{x,\partial\mid\partial x=x\partial+1}$ the \emph{first Weyl algebra} over $K$.
	This Noetherian domain will be the main source of examples for this paper, because already in this mildly non-commutative situation we can observe all occurring phenomena.
	In $\mc{D}$, we work with the \emph{Euler operator} $\theta:=x\partial$ and the $\IZ$-grading induced by setting $\deg(x)=-1$ and $\deg(\partial)=1$.
\end{rem}

\begin{ex}\label{examples_of_left_Ore_sets_in_first_weyl_algebra}
	Consider the following sets in $\mc{D}$:
	\begin{itemize}
		\item
			For any $f\in K[x]\cup K[\partial]$, $\merz{f}$ is a left Ore set (\cite{HL17}, Lemma 4.3).
		\item
			$\merz{\theta}=\merz{x\partial}$ is not a left Ore set, since for any $l\in\IN_0$, by the forthcoming \Cref{theta_proposition} we have that $\theta^lx=x(\theta+1)^l$, which can never be of the form $f\theta^k$ for any $f\in\mc{D}$ and a given $k\in\IN$.
		\item
			If $\chara(K)=0$, then $\Theta:=\merz{\theta+\IZ}$ is a left Ore set in $\mc{D}$, but neither left nor right saturated.
			The last part is obvious from the fact that $x\partial=\theta\in\Theta$, but $x\notin\Theta$ as well as $\partial\notin\Theta$.
			The first part will be proven in multiple steps in the forthcoming \Cref{Theta_is_left_Ore}.
	\end{itemize}
\end{ex}

Given two left Ore sets $S,T$ in $R$, their product $ST:=\set{st\mid (s,t)\in S\times T}$ satisfies the left Ore condition, but is not a multiplicative set in general (\cite{skoda_2006}, 6.9).
The multiplicative closure of the product, however, retains the left Ore property:

\begin{lemma}\label{merz_of_union_of_Ore_sets_is_Ore}
	Let $R$ be a domain and $J$ a non-empty index set such that $S_j\subseteq R$ is a left Ore set in $R$ for every $j\in J$.
	Then $T:=[\bigcup_{j\in J}S_j]$ is a left Ore set in $R$.
\end{lemma}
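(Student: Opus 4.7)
The plan is to observe that $T$ is a multiplicative set essentially by construction, and then derive the left Ore condition for $T$ from the Ore condition of each $S_j$ by induction on the length of a factorization. Concretely, $1_R \in T$ as the empty product and $T$ is closed under multiplication by definition of multiplicative closure; moreover $0_R \notin T$ because $R$ is a domain and each $S_j$ is multiplicative (so $0_R \notin S_j$), hence any finite product of elements from $\bigcup_j S_j$ is nonzero. This handles the ``multiplicative set'' requirement.

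For the Ore condition, let $t \in T$ and $r \in R$. By definition of $[\cdot]$, write $t = s_1 s_2 \cdots s_n$ with $s_i \in S_{j_i}$ for some $j_i \in J$. I would proceed by induction on $n$. For $n = 0$ we have $t = 1_R$ and can take $\tilde{t} = 1_R, \tilde{r} = r$; for $n = 1$ the claim is exactly the left Ore condition for $S_{j_1}$. For the inductive step I apply the Ore condition successively to the factors from the right: first use the Ore condition of $S_{j_n}$ to find $\tilde{s}_n \in S_{j_n}$ and $r_1 \in R$ with $\tilde{s}_n r = r_1 s_n$, then use $S_{j_{n-1}}$ on $r_1$ to obtain $\tilde{s}_{n-1} \in S_{j_{n-1}}$ and $r_2 \in R$ with $\tilde{s}_{n-1} r_1 = r_2 s_{n-1}$, and so on. Composing gives
\[
\tilde{s}_1 \tilde{s}_2 \cdots \tilde{s}_n \, r = r_n \, s_1 s_2 \cdots s_n = r_n t,
\]
and the left factor $\tilde{s}_1 \cdots \tilde{s}_n$ lies in $T$ because each $\tilde{s}_i \in S_{j_i} \subseteq T$ and $T$ is multiplicatively closed. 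This produces the witnesses required by the Ore condition for $T$.

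The step I consider most delicate is the bookkeeping in the iterated application of the Ore condition: at each stage one has to carefully replace the current ``right-hand'' element $r_{k}$ using the Ore condition for the appropriate $S_{j_{n-k}}$, so that after all $n$ steps the accumulated left factor is a genuine product of elements of $T$ and the accumulated right-hand product is exactly $t$ in its original order. Apart from this indexing, everything is formal; no Ore set other than the individual $S_j$ is needed, and the result does not require $J$ to be finite since only finitely many indices $j_{i_1},\dots,j_{i_n}$ appear in any given factorization of $t$.
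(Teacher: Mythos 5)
Your proof is correct and follows exactly the approach the paper indicates (induction on the number of factors from the sets $S_j$ appearing in an element of $T$, with the details delegated to Krause--Lenagan); your iterated application of the Ore condition from the rightmost factor inward is precisely that induction unwound, and the verification that $T$ is a multiplicative set is handled correctly as well.
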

\begin{proof}
	The proof works by induction over the number of factors from the sets $S_j$ that appear in an element of $T$.
	Details can be found in \cite{krause_lenagan}. %4.1
\end{proof}

\begin{ex}\label{merz_of_x_and_d_is_left_Ore}
	The set $S=\merz{x,\partial}=\merz{\merz{x}\cup\merz{\partial}}$ is a left Ore set in $\mc{D}$ by \Cref{examples_of_left_Ore_sets_in_first_weyl_algebra} and \Cref{merz_of_union_of_Ore_sets_is_Ore}, but neither left nor right saturated: we have $\partial(x\partial-1)=x\partial^2\in S$ and $(x\partial-1)x=x^2\partial\in S$, but $x\partial-1$ is not contained in $S$.
\end{ex}

\section{Ore localization of domains}

In this section we recall the most important notions and known results of the technique of Ore localization.

\begin{definition}\label{definition_left_ore_localization}
	Let $S$ be a multiplicative set in a domain $R$.
	A ring $R_S$ together with an injective homomorphism $\varphi:R\rightarrow R_S$ is called a \emph{left Ore localization} of $R$ at $S$ if:
	\begin{enumerate}[(i)]
		\item
			For all $s\in S$, $\varphi(s)$ is a unit in $R_S$.
		\item
			For all $x\in R_S$, we have $x=\varphi(s)\inv\varphi(r)$ for some $s\in S$ and $r\in R$.
	\end{enumerate}
\end{definition}

It can be shown that a left Ore localization of $R$ at $S$ exists if and only if $S$ is a left Ore set in $R$; in this case the localization is unique up to isomorphism.
The following gives an explicit construction:

\begin{thm}[Ore, 1931]\label{thm_construction}
	Let $S$ be a left Ore set in a domain $R$ and $S\inv R:=(S\times R)/\sim$, where the equivalence relation $\sim$ is given by
	\[
		(s_1,r_1)\sim(s_2,r_2)\quad\Leftrightarrow\quad
		\exists~\tilde{s}\in S,\exists~\tilde{r}\in R:\tilde{s}s_2=\tilde{r}s_1\text{ and }\tilde{s}r_2=\tilde{r}r_1.
	\]
	Together with the operations
	\[
		+:S\inv R\times S\inv R\rightarrow S\inv R,\quad
		(s_1,r_1)+(s_2,r_2):=(\tilde{s}s_1,\tilde{s}r_1+\tilde{r}r_2),
	\]
	where $\tilde{s}\in S$ and $\tilde{r}\in R$ satisfy $\tilde{s}s_1=\tilde{r}s_2$, and
	\[
		\cdot:S\inv R\times S\inv R\rightarrow S\inv R,\quad
		(s_1,r_1)\cdot(s_2,r_2):=(\tilde{s}s_1,\tilde{r}r_2),
	\]
	where $\tilde{s}\in S$ and $\tilde{r}\in R$ satisfy $\tilde{s}r_1=\tilde{r}s_2$, $(S\inv R,+,\cdot)$ becomes a ring with $0_{S\inv R}=(1_R,0_R)$ and $1_{S\inv R}=(1_R,1_R)$.
\end{thm}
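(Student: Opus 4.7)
The plan is to verify the four things that the statement bundles together: that $\sim$ is an equivalence relation, that $+$ and $\cdot$ are well-defined, that they satisfy the ring axioms, and that the designated elements serve as $0$ and $1$. I would work throughout with the concrete representatives $(s,r) \in S \times R$ and lean repeatedly on the left Ore condition to produce common left multiples whenever I need to bring two fractions onto a common denominator.

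First I would tackle the equivalence relation. Reflexivity follows by taking $\tilde{s}=s_1$ and $\tilde{r}=1_R$; symmetry is immediate because the defining equations are symmetric in the two pairs. The genuine content is transitivity: given $(s_1,r_1)\sim(s_2,r_2)$ via some $(a,b)\in S\times R$ and $(s_2,r_2)\sim(s_3,r_3)$ via some $(c,d)\in S\times R$, the idea is to apply the left Ore condition to the pair $(ba, cs_2)$ or rather to the elements $as_2, cs_2\in S$ in order to find $u\in S$, $v\in R$ with $ua=vc$, and then check that $(us_2 a, us_2 b)$ combined with $(vs_2 c, vs_2 d)$ witness $(s_1,r_1)\sim(s_3,r_3)$ after verifying the required identities from the given relations and the fact that $R$ is a domain (so that equalities can be cancelled on the right). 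This is where the Ore condition does the heavy lifting.

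Next I would show the operations are well-defined. There are two independent issues. First, for a fixed pair of representatives, the definition uses auxiliary elements $\tilde{s}\in S$ and $\tilde{r}\in R$ produced by the Ore condition, and these are not unique; I would pick two such choices $(\tilde{s},\tilde{r})$ and $(\tilde{s}',\tilde{r}')$, apply the Ore condition once more to $\tilde{s},\tilde{s}'\in S$ to obtain a common left multiple, and check that both outputs lie in the same $\sim$-class. Second, I would replace each input $(s_i,r_i)$ by an equivalent pair and, using the witnesses supplied by $\sim$ together with a fresh application of the Ore condition, verify that the sum and product are unchanged up to $\sim$. The multiplication case additionally requires noting that $S$ is multiplicative so that $\tilde{s}s_1\in S$ really is a denominator. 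I expect this well-definedness argument, together with transitivity of $\sim$, to be the main obstacle, since both involve juggling several simultaneous applications of the Ore condition and require using that $R$ is a domain to cancel.

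Once the operations are well-defined, the ring axioms reduce to manipulations of representatives after bringing everything to common denominators. For associativity and distributivity I would first bring the three fractions involved onto a single common left denominator using \Cref{merz_of_union_of_Ore_sets_is_Ore} applied to finitely many elements (or iterated binary applications of the Ore condition), at which point the identities become the corresponding identities in $R$ and follow from the ring axioms of $R$. The additive inverse of $(s,r)$ is $(s,-r)$. Finally, I would verify that $(1_R,0_R)$ acts as the additive neutral element (taking $\tilde{s}=1_R=\tilde{r}$ in the definition of $+$) and that $(1_R,1_R)$ acts as the multiplicative identity from both sides (again choosing trivial witnesses), and observe that $(1_R,0_R)\neq(1_R,1_R)$ because $R$ is non-trivial, so $S^{-1}R$ is a ring in the sense adopted in the preliminaries.
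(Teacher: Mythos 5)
Your overall plan is the standard one (the paper itself gives no proof here, deferring to \cite{skoda_2006} and \cite{ore}), but two steps you dismiss as trivial do not work as written. First, your reflexivity witnesses fail: with $\tilde{s}=s_1$ and $\tilde{r}=1_R$ the condition $\tilde{s}s_1=\tilde{r}s_1$ reads $s_1s_1=s_1$, which is false in general. The correct choice is $\tilde{s}=\tilde{r}=1_R$, which is available precisely because $1_R\in S$. The same kind of slip occurs in your verification of the additive neutral element, where ``$\tilde{s}=1_R=\tilde{r}$'' does not satisfy $\tilde{s}s=\tilde{r}\cdot 1_R$; correct witnesses exist but are different on the two sides (e.g. $\tilde{s}=1_R$, $\tilde{r}=s$ for $(s,r)+(1,0)$).

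Second, and more substantively, symmetry of $\sim$ is \emph{not} immediate. The two defining equations are formally symmetric under exchanging the pairs, but the side conditions are not: the multiplier $\tilde{s}$ of $(s_2,r_2)$ must lie in $S$, whereas the multiplier $\tilde{r}$ of $(s_1,r_1)$ is only required to lie in $R$ --- a priori it only lies in $\LSat(S)$, since $\tilde{r}s_1=\tilde{s}s_2\in S$ (this asymmetry is exactly the point of \Cref{eqrel_characterization}). To prove symmetry you must manufacture a witness for $(s_1,r_1)$ that genuinely lies in $S$, and this costs the same tools you correctly reserve for transitivity: choose $a\in S$, $b\in R$ with $as_1=bs_2$ by the Ore condition, then $c\in S$, $d\in R$ with $c\tilde{r}=da$; from $dbs_2=das_1=c\tilde{r}s_1=c\tilde{s}s_2$ cancel $s_2\neq 0$ to get $db=c\tilde{s}$, and from $d(ar_1)=c\tilde{r}r_1=c\tilde{s}r_2=d(br_2)$ cancel $d\neq 0$ to get $ar_1=br_2$, so that $(a,b)$ witnesses $(s_2,r_2)\sim(s_1,r_1)$. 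With these repairs the outline is sound: your identification of the two separate well-definedness issues, the common-denominator reduction for associativity and distributivity (via iterated Ore, not via \Cref{merz_of_union_of_Ore_sets_is_Ore}, which concerns unions of Ore sets rather than common multiples), and $(s,-r)$ as the additive inverse all match the standard argument the paper cites.
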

\begin{proof}
	Checking the ring axioms is a rather lengthy task that heavily involves the Ore condition and is generally avoided in the literature.
	For (mostly) complete proofs we refer to \cite{skoda_2006} and the original paper \cite{ore}.
\end{proof}

By abuse of notation we denote the elements of $S\inv R$ again by $(s,r)$.
The restriction $0\notin S$ included in the definition of multiplicative sets exists to avoid trivialities, since $0\in S$ holds if and only if $S\inv R=\set{0}$.

The equivalence relation $\sim$ defined in \Cref{thm_construction} is inspired by \cite{skoda_2006}.
At first glance, it seems to be different from
\[
	(s_1,r_1)\approx(s_2,r_2)
	\quad:\Leftrightarrow\quad
	\exists~a,b\in R:
	as_2=bs_1\in S\text{ and }ar_2=br_1,
\]
which is the one given in many textbooks (e.g., \cite{BGV,goodearlwarfield,krause_lenagan}).
Note that instead of requiring $\tilde{s}\in S$, we need to include the condition that $as_2=bs_1$ is contained in $S$.
Nevertheless, the two relations share the same equivalence classes.
Indeed, there exist several characterizations of the equivalence relation behind Ore localization, some of which are collected in the forthcoming \Cref{eqrel_characterization}.

\begin{definition}
	Let $S$ be a left Ore set in a domain $R$.
	The \emph{structural homomorphism} or \emph{localization map} of $S\inv R$ is $\rho_{S,R}:R\rightarrow S\inv R,~r\mapsto(1,r)$.
\end{definition}

A short calculation shows that $\rho_{S,R}$ is a monomorphism of rings that satisfies the requirements for $\varphi$ from \Cref{definition_left_ore_localization}.
Therefore, the pair $(S\inv R,\rho_{S,R})$ is indeed a realization of the left Ore localization of $R$ at $S$.

\begin{lemma}
	Let $S$ be a left Ore set in a domain $R$ and $(s,r)\in S\inv R$.
	\begin{enumerate}[(a)]
		\item
			We have $(s,r)=0$ if and only if $r=0$.
		\item
			We have $(s,r)=1$ if and only if $s=r$.
		\item
			Let $w\in R$ with $ws\in S$, then $(s,r)=(ws,wr)$.
		\item
			The left Ore localization $S\inv R$ is a domain.
	\end{enumerate}
\end{lemma}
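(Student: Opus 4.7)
My plan is to prove the four parts in order, since (d) will reduce to (a). The common ingredients throughout are: the explicit definition of $\sim$ from \Cref{thm_construction}, the fact that $R$ is a domain, and the condition $0_R \notin S$ built into the notion of a multiplicative set. Parts (a) and (b) come from unfolding the equivalence against the specified representatives of $0_{S\inv R}$ and $1_{S\inv R}$, part (c) is essentially bookkeeping, and part (d) will be deduced from (a) via the explicit product formula.

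For (a), unwinding $(s, r) \sim (1_R, 0_R)$ yields $\tilde{s} \in S$ and $\tilde{r} \in R$ with $\tilde{s} = \tilde{r} s$ and $\tilde{r} r = 0_R$. Since $0_R \notin S$, the first equation forces $\tilde{r} \neq 0_R$, and the domain property of $R$ then gives $r = 0_R$. The converse is witnessed by $\tilde{s} := s$ and $\tilde{r} := 1_R$. Part (b) follows the same template: unwinding $(s, r) \sim (1_R, 1_R)$ produces $\tilde{r} s = \tilde{s} = \tilde{r} r$ with $\tilde{r} \neq 0_R$, and the domain property collapses $\tilde{r}(s - r) = 0$ to $s = r$; the converse again uses $\tilde{s} := s$ and $\tilde{r} := 1_R$. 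For (c), the hypothesis $ws \in S$ is precisely what makes $(ws, wr)$ a legitimate symbol in $S\inv R$; the equivalence $(s, r) \sim (ws, wr)$ then holds directly with the witnesses $\tilde{s} := 1_R$ and $\tilde{r} := w$.

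For (d), I would apply the product formula from \Cref{thm_construction}: $(s_1, r_1) \cdot (s_2, r_2) = (\tilde{s} s_1, \tilde{r} r_2)$, where $\tilde{s} \in S$ and $\tilde{r} \in R$ satisfy $\tilde{s} r_1 = \tilde{r} s_2$. If this product vanishes, part (a) gives $\tilde{r} r_2 = 0_R$; since $R$ is a domain, either $r_2 = 0_R$, in which case $(s_2, r_2) = 0$ by (a), or $\tilde{r} = 0_R$, in which case $\tilde{s} r_1 = 0_R$ together with $\tilde{s} \neq 0_R$ forces $r_1 = 0_R$, so $(s_1, r_1) = 0$ by (a) again. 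I do not expect any genuine obstacle; the only recurring subtle point is remembering that every element of $S$ is automatically nonzero, which is exactly what unlocks the domain-based cancellations. The ambiguity in the choice of $\tilde{s}, \tilde{r}$ in the formulas is harmless, since at each step only the \emph{existence} of some such pair is needed.
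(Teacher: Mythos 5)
Your proof is correct. The paper states this lemma without proof, and your direct verification --- unwinding the equivalence relation $\sim$ against the representatives $(1_R,0_R)$ and $(1_R,1_R)$, exhibiting the explicit witnesses $\tilde{s}=1_R$, $\tilde{r}=w$ for (c), and reducing (d) to (a) via the product formula --- is precisely the standard argument the authors leave to the reader. The only cosmetic omission is the observation that $S\inv R$ is not the zero ring (needed since the paper's domains are by convention non-trivial), which follows immediately from (a): $(1,1)=(1,0)$ would force $1_R=0_R$.
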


The following is a generalization of the classical quotient field:

\begin{definition}
	A domain $R$ is called a \emph{left Ore domain} if $R\setminus\set{0}$ is a left Ore set in $R$.
	The associated localization $\Quot(R):=(R\setminus\set{0})\inv R$ is called the \emph{left quotient (skew) field} of $R$.
\end{definition}

As the name might suggest, $\Quot(R)$ is indeed a skew field: the inverse of a non-zero element $(s,r)\in\Quot(R)$ is given by $(r,s)$.
Any left Noetherian domain is a left Ore domain (e.g. \cite{mcconnell_robson}, 2.1.15) and thus has a left quotient field.

\begin{lemma}
	Let $S$ be a left Ore set in a left Ore domain $R$.
	Then $S\inv R$ is a left Ore domain.
\end{lemma}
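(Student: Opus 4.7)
The plan is to verify the two conditions: $S^{-1}R$ is a domain (which is already recorded in the preceding lemma stating ``$S\inv R$ is a domain''), and $(S\inv R)\setminus\set{0}$ is a left Ore set in $S\inv R$. Since the multiplicative set property of the nonzero elements in a domain is automatic, the only substantive task is to verify the left Ore condition in $S\inv R$.

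So I would start by picking arbitrary $x\in S\inv R$ and $y\in (S\inv R)\setminus\set{0}$, and use \Cref{definition_left_ore_localization}(ii) to write them as $x=\rho_{S,R}(s_1)\inv\rho_{S,R}(r_1)$ and $y=\rho_{S,R}(s_2)\inv\rho_{S,R}(r_2)$ for some $s_1,s_2\in S$ and $r_1,r_2\in R$. Abusing notation by identifying $R$ with its image in $S\inv R$, this becomes $x=s_1\inv r_1$ and $y=s_2\inv r_2$. Since $y\neq 0$ and $s_2$ is a unit (hence nonzero) and $S\inv R$ is a domain, $r_2\neq 0$ as an element of $R$.

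Next I would invoke the hypothesis that $R$ is a left Ore domain: $R\setminus\set{0}$ is a left Ore set in $R$, so for $r_1\in R$ and $r_2\in R\setminus\set{0}$ there exist $u\in R$ and $v\in R\setminus\set{0}$ with $vr_1=ur_2$. Multiplying through in $S\inv R$ gives
\[
	(vs_1)\cdot x = v\cdot(s_1 x) = vr_1 = ur_2 = u\cdot(s_2 y) = (us_2)\cdot y.
\]
Setting $\tilde{x}:=us_2\in S\inv R$ and $\tilde{y}:=vs_1\in S\inv R$, we have $\tilde{y}x=\tilde{x}y$, and $\tilde{y}\neq 0$ because $v\neq 0$, $s_1\neq 0$, and $S\inv R$ is a domain. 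This establishes the left Ore condition for $(S\inv R)\setminus\set{0}$ inside $S\inv R$.

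I do not anticipate a real obstacle: the argument is essentially ``clear denominators, apply the Ore property of $R\setminus\set{0}$, push the result back into $S\inv R$.'' The only point that requires care is to confirm at every step that the element we produce as $\tilde{y}$ is nonzero, and this is where the domain property of $S\inv R$ (from the preceding lemma) together with $v\neq 0$ and $s_1\in S$ is used.
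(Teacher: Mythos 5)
Your proof is correct and follows essentially the same route as the paper: write both elements as left fractions, apply the left Ore condition of $R\setminus\set{0}$ to the numerators $r_1$ and $r_2\neq 0$, and multiply back through the (invertible) denominators, checking that the resulting left multiplier $vs_1$ is nonzero because $S\inv R$ is a domain. No gaps.
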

\begin{proof}
	Let $(s_1,r_1)\in S\inv R$ and $(s_2,r_2)\in S\inv R\setminus\set{0}$, then $r_2\in R\setminus\set{0}$.
	Since $R$ is a left Ore domain there exist $x'\in R\setminus\set{0}$ and $y'\in R$ such that $x'r_1=y'r_2$.
	Define $x:=(1,x')\cdot(1,s_1)\in S\inv R\setminus\set{0}$ and $y:=(1,y')\cdot(1,s_2)\in S\inv R$, then
	\[
		x(s_1,r_1)
		=(1,x')(1,s_1)(s_1,r_1)
		=(1,x'r_1)
		=(1,y'r_2)
		=(1,y')(1,s_2)(s_2,r_2)
		=y(s_2,r_2).\qedhere
	\]
\end{proof}

\section{Saturation closure}

Now we define the notion of \emph{left $T$-closure} or \emph{left $T$-saturation} of $P$, where $T$ is a subset of a ring $R$ and $P$ is a subset of a left $R$-module $M$.

\begin{definition}
	Let $T$ be a subset of a ring $R$, $M$ a left $R$-module and $P$ a subset of $M$.
	Then
	\[
		\LSat_T(P)
		:=\LSat_T^M(P)
		:=\set{m\in M\mid\exists~t\in T:tm\in P}.
	\]
\end{definition}

By construction, $\LSat_T^M(P)$ is inclusion-preserving in all three parameters.
The next results follow immediately from the definition:

\begin{lemma}\label{basic_properties_of_general_LSat}
	Let $T$ be a subset of a ring $R$, $M$ a left $R$-module and $P$ a subset of $M$.
	\begin{enumerate}[(a)]
		\item
			If $1\in T$, then $P\subseteq\LSat_T(P)$.
		\item
			If $T\neq\emptyset$, then $0\in P$ if and only if $0\in\LSat_T(P)$.
		\item
			If $0\in T$, then
			\[
				\LSat_T(P)=M
				\quad\Leftrightarrow\quad
				0\in\LSat_T(P)
				\quad\Leftrightarrow\quad
				0\in P.
			\]
		\item
			If $0\notin P$, then $\LSat_T(P)=\LSat_{T\setminus\set{0}}(P)$.
		\item
			If $S$ is also a subset of $R$, then $\LSat_S(\LSat_T(P))=\LSat_{TS}(P)$.
	\end{enumerate}
\end{lemma}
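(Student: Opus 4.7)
My plan is to handle all five parts as direct unpackings of the definition
$\LSat_T(P)=\{m\in M:\exists t\in T,\ tm\in P\}$;
the only substantive bookkeeping is the order of multiplication in part (e).

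For part (a), given $1\in T$ and $p\in P$, the witness $t=1$ gives $1\cdot p=p\in P$, hence $p\in\LSat_T(P)$. For part (b), pick any $t\in T$ (possible since $T\neq\emptyset$); then $t\cdot 0=0$, so $0\in P$ implies $0\in\LSat_T(P)$, and conversely if $0\in\LSat_T(P)$ there exists $t'\in T$ with $t'\cdot 0=0\in P$. For part (c), note that when $0\in T$ the element $0\in T$ multiplies every $m\in M$ to $0$; thus $0\in P$ implies $\LSat_T(P)=M$ via the witness $t=0$. The remaining implications $\LSat_T(P)=M\Rightarrow 0\in\LSat_T(P)\Rightarrow 0\in P$ are immediate from part (b), since $0\in T$ ensures $T\neq\emptyset$.

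For part (d), the inclusion $\LSat_{T\setminus\{0\}}(P)\subseteq\LSat_T(P)$ follows from the observation that $\LSat_{(\cdot)}(P)$ is inclusion-preserving in its first argument. For the converse, if $m\in\LSat_T(P)$ with witness $t\in T$, then $tm\in P$; since $0\notin P$ forces $tm\neq 0$, we must have $t\neq 0$, so $t\in T\setminus\{0\}$ is a valid witness.

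The one part that requires a little care is (e): here I have to match the order of composition with the product $TS=\{ts:t\in T,s\in S\}$. Unfolding, $m\in\LSat_S(\LSat_T(P))$ iff there exists $s\in S$ with $sm\in\LSat_T(P)$, iff there exist $s\in S$ and $t\in T$ with $t(sm)\in P$. By associativity of the module action, $t(sm)=(ts)m$, so this is equivalent to the existence of $u=ts\in TS$ with $um\in P$, i.e.\ $m\in\LSat_{TS}(P)$. The potential pitfall would be reversing the order and writing $ST$ instead of $TS$, but the definition applies the inner $T$-witness \emph{after} the outer $S$-witness, so $TS$ is indeed correct. Beyond this small bookkeeping point, I do not expect any obstacle; the lemma is essentially a warm-up catalogue of direct consequences of the definition.
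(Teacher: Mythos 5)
Your proof is correct and is exactly the direct verification the paper has in mind: the paper gives no proof at all, stating only that these results "follow immediately from the definition," and your unpacking (including the order check $t(sm)=(ts)m$ giving $TS$ rather than $ST$ in part (e)) supplies precisely those immediate details.
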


We restrict ourselves to multiplicative sets $T$ to get more structure.
This has the additional benefit that $P$ is always a subset of $\LSat_T(P)$ since $1\in T$.

\begin{definition}
	Let $T$ be a multiplicative set in a ring $R$, $M$ a left $R$-module and $P$ a subset of $M$.
	We call $P$ \emph{left} $T$-\emph{saturated} if $tm\in P$ implies $m\in P$ for all $t\in T$ and all $m\in M$.
\end{definition}

%As it turns out, these two seemingly different concepts are actually the same:

\begin{lemma}\label{basic_properties of_LSat_for_qmc}
	Let $T$ be a multiplicative set in a ring $R$, $M$ a left $R$-module and $P$ a subset of $M$.
	Then we have:
	\begin{enumerate}[(a)]
		\item
			$\LSat_T(P)$ is left $T$-saturated.
		\item
			$P$ is left $T$-saturated if and only if $P=\LSat_T(P)$. % is $T$-closed.
		\item
			$\LSat_T(P)$ is the smallest left $T$-saturated superset of $P$ with respect to inclusion.
	\end{enumerate}
\end{lemma}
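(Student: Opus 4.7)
My plan is to unfold each part directly from the definitions, noting that multiplicativity of $T$ guarantees $1 \in T$ (so part (a) of \Cref{basic_properties_of_general_LSat} applies, giving $P \subseteq \LSat_T(P)$) and that $T$ is closed under products (the key algebraic fact for part (a) below).

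For (a), I would take $t \in T$ and $m \in M$ with $tm \in \LSat_T(P)$ and produce $t' \in T$ with $t'(tm) \in P$; then $(t't)m \in P$ and $t't \in T$ by multiplicativity, so $m \in \LSat_T(P)$. This is essentially the associativity observation combined with closure of $T$ under multiplication.

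For (b), the backward direction is immediate from (a). For the forward direction, assume $P$ is left $T$-saturated; then $P \subseteq \LSat_T(P)$ since $1 \in T$, and conversely any $m \in \LSat_T(P)$ satisfies $tm \in P$ for some $t \in T$, so left $T$-saturation of $P$ forces $m \in P$.

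For (c), I would assemble the previous parts: $P \subseteq \LSat_T(P)$ because $1 \in T$, and $\LSat_T(P)$ is left $T$-saturated by (a), so it is a left $T$-saturated superset of $P$. Minimality follows by the same trick as in (b): if $Q$ is any left $T$-saturated superset of $P$ and $m \in \LSat_T(P)$, pick $t \in T$ with $tm \in P \subseteq Q$, and left $T$-saturation of $Q$ gives $m \in Q$, so $\LSat_T(P) \subseteq Q$.

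I do not anticipate any real obstacle here; the statement is essentially a bookkeeping exercise built on the fact that $T$ is closed under multiplication and contains $1$. The only subtlety worth flagging explicitly is that one must invoke multiplicativity of $T$ in part (a) (for the product $t't \in T$) and contain $1$ in parts (b) and (c) (for the inclusion $P \subseteq \LSat_T(P)$), both of which are supplied by the assumption that $T$ is a multiplicative set together with \Cref{basic_properties_of_general_LSat}.
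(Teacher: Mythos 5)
Your proposal is correct and follows essentially the same argument as the paper: part (a) uses closure of $T$ under multiplication to absorb $\tilde{t}t$, part (b) combines (a) with the definition, and part (c) pushes an element of $\LSat_T(P)$ into any left $T$-saturated superset $Q$ via $tm\in P\subseteq Q$. The only cosmetic difference is that you show $\LSat_T(P)\subseteq Q$ for an arbitrary saturated superset $Q$, while the paper restricts to $Q\subseteq\LSat_T(P)$ and concludes equality; the underlying step is identical.
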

\begin{proof}
	\begin{enumerate}[{\itshape(a)}]
		\item
			Let $t\in T$ and $m\in M$ such that $tm\in\LSat_T(P)$.
			Then there exists $\tilde{t}\in T$ such that $\tilde{t}tm\in P$.
			Since $\tilde{t}t\in T$, we have $m\in\LSat_T(P)$.
		\item
			Let $m\in\LSat_T(P)$, then $tm\in P$ for some $t\in T$.
			If $P$ is left $T$-saturated, we have $m\in P$, which implies $\LSat_T(P)=P$. %, thus $P$ is left $T$-closed.
			On the other hand, since $\LSat_T(P)$ is left $T$-saturated by the result above, $P=\LSat_T(P)$ implies that $P$ is left $T$-saturated.
%			On the other hand, if $P$ is left $T$-closed, then $P=\LSat_T(P)$, but $\LSat_T(P)$ is left $T$-saturated by the result above.
		\item
			Let $Q\subseteq M$ be a left $T$-saturated set with $P\subseteq Q\subseteq\LSat_T(P)$.
			Let $m\in\LSat_T(P)$, then $tm\in P\subseteq Q$ for some $t\in T$.
			Since $Q$ is left $T$-saturated, we have $m\in Q$ and therefore $\LSat_T(P)=Q$.\qedhere
	\end{enumerate}
\end{proof}

Due to the previous result, we can interpret $\LSat_T(P)$ for a multiplicative set $T$ as the \emph{left} $T$-\emph{saturation closure} of $P$ in $M$.

Consider the following question: given two subsets of $M$, do they have the same left $T$-saturation closure?
A straight-forward approach is to compute both closures and compare, but this might not be feasible.
Nevertheless, there are sufficient conditions for equality of the closures that can be easier to check:

\begin{cor}\label{sufficient_conditions_for_equality_of_closures}
	Let $T$ be a multiplicative set in a ring $R$, $M$ a left $R$-module and $P_1,P_2\subseteq M$ two subsets of $M$.
	\begin{enumerate}[(a)]
		\item
			We have $P_1\subseteq\LSat_T(P_2)$ if and only if $\LSat_T(P_1)\subseteq\LSat_T(P_2)$.
		\item
			The following are equivalent:
			\begin{enumerate}[(1)]
				\item
					$P_1\subseteq\LSat_T(P_2)$ and $P_2\subseteq\LSat_T(P_1)$.
				\item
					$\LSat_T(P_1)=\LSat_T(P_2)$.
			\end{enumerate}
		\item
			If $P_1\subseteq P_2\subseteq\LSat_T(P_1)$, then $\LSat_T(P_1)=\LSat_T(P_2)$.
	\end{enumerate}
\end{cor}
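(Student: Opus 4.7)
The plan is to use \Cref{basic_properties of_LSat_for_qmc} as the main workhorse, together with the monotonicity of $\LSat_T$ in $P$ (observed right after its definition), and to establish the three parts in the given order, with each subsequent part falling out of the previous.

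For part \emph{(a)}, I would first dispatch the easy direction: assuming $\LSat_T(P_1)\subseteq\LSat_T(P_2)$, the inclusion $P_1\subseteq\LSat_T(P_1)$ from \Cref{basic_properties_of_general_LSat}\emph{(a)} immediately yields $P_1\subseteq\LSat_T(P_2)$. For the nontrivial direction, I see two equivalent routes and would pick whichever reads cleanest. The first is to invoke \Cref{basic_properties of_LSat_for_qmc}\emph{(c)}: the set $\LSat_T(P_2)$ is left $T$-saturated by \Cref{basic_properties of_LSat_for_qmc}\emph{(a)} and, by hypothesis, contains $P_1$, so it must contain the smallest such superset, namely $\LSat_T(P_1)$. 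The second route is to apply $\LSat_T$ to the hypothesis $P_1\subseteq\LSat_T(P_2)$ using monotonicity, and then collapse $\LSat_T(\LSat_T(P_2))=\LSat_T(P_2)$ via \Cref{basic_properties of_LSat_for_qmc}\emph{(a)}+\emph{(b)} (or directly via \Cref{basic_properties_of_general_LSat}\emph{(e)} applied with $S=T=T$ and idempotence of the multiplicative closure, though this is heavier machinery than needed).

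Part \emph{(b)} is then a two-line consequence of \emph{(a)}: applying \emph{(a)} with roles $(P_1,P_2)$ shows $P_1\subseteq\LSat_T(P_2)\Leftrightarrow\LSat_T(P_1)\subseteq\LSat_T(P_2)$, and applying it with roles swapped to $(P_2,P_1)$ gives the reverse containment; combining the two inclusions yields equality, and conversely equality trivially implies both hypotheses.

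Part \emph{(c)} follows by combining monotonicity with \emph{(a)}: from $P_1\subseteq P_2$ we get $\LSat_T(P_1)\subseteq\LSat_T(P_2)$, and from $P_2\subseteq\LSat_T(P_1)$ together with \emph{(a)} (applied to the pair $(P_2,P_1)$) we get $\LSat_T(P_2)\subseteq\LSat_T(P_1)$. The two inclusions give the claim. There is no real obstacle here; the only judgement call is the presentation of part \emph{(a)}, since choosing between the ``smallest saturated superset'' characterization and the idempotence-based argument changes the flavor but not the substance.
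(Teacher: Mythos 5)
Your proposal is correct and follows essentially the same route as the paper: the paper's proof of \emph{(a)} is the direct element chase ($tm\in P_1\subseteq\LSat_T(P_2)$, then $t'tm\in P_2$ with $t't\in T$), which is precisely the unwound form of your idempotence/smallest-saturated-superset argument via \Cref{basic_properties of_LSat_for_qmc}, and parts \emph{(b)} and \emph{(c)} are reduced to \emph{(a)} in the same way. No gaps.
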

\begin{proof}
	\begin{enumerate}[{\itshape(a)}]
		\item
			Let $P_1\subseteq\LSat_T(P_2)$ and $m\in\LSat_T(P_1)$, then there exists $t\in T$ such that $tm\in P_1\subseteq\LSat_T(P_2)$.
			But then $t'tm\in P_2$ for some $t'\in T$, thus $m\in\LSat_T(P_2)$ since $t't\in T$, which implies $\LSat_T(P_1)\subseteq\LSat_T(P_2)$.
			The other implication is obvious since $P_1\subseteq\LSat_T(P_1)$.
		\item
			Follows from applying \textit{(a)} twice.
		\item
			Follows from \textit{(b)}, since $P_1\subseteq P_2$ implies $P_1\subseteq\LSat_T(P_2)$.\qedhere
	\end{enumerate}
\end{proof}

Note that by \Cref{inclusions_of_multiplicatively_closed_sets_on_generators} it suffices to check the inclusions on generators.

\section{Left saturation with respect to the ring}

\begin{definition}\label{definition_LSat_in_ring}
	Let $P$ be a subset of a domain $R$.
	The \emph{left saturation} of $P$ in $R$ is
	\[
		\LSat(P)
		:=\LSat_R^R(P)
		=\set{r\in R\mid\exists~w\in R:wr\in P}.
	\]
\end{definition}

Strictly speaking, $R$ is not a multiplicative set since it obviously contains $0$.
Nevertheless, in light of \Cref{basic_properties_of_general_LSat} we have $\LSat(P)=\LSat_R(P)=\LSat_{R\setminus\set{0}}(P)$ since $0\notin M$.
Now $R\setminus\set{0}$ is a multiplicative set since $R$ is a domain and we can apply \Cref{basic_properties of_LSat_for_qmc}:

\begin{lemma}\label{basic_properties_of_LSat_for_N=T=R}
	Let $P$ be a subset of a domain $R$.
	Then we have:
	\begin{enumerate}[(a)]
		\item
			$\LSat(P)$ is left saturated.
		\item
			$P$ is left saturated if and only if $P=\LSat(P)$.
		\item
			$\LSat(P)$ is the smallest left saturated superset of $P$ with respect to inclusion.
		
	\end{enumerate}
\end{lemma}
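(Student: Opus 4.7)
The plan is to obtain all three parts as direct corollaries of the preceding Lemma \ref{basic_properties of_LSat_for_qmc}. I would take $T := R\setminus\set{0}$, which is multiplicative because $R$ is a domain, and view $R$ as a left module over itself with $M := R$. The paragraph immediately preceding the statement already records the key identification $\LSat(P) = \LSat_R(P) = \LSat_{R\setminus\set{0}}^R(P)$, using part (d) of Lemma \ref{basic_properties_of_general_LSat} together with the fact that $P$ is a subset of a domain. This places the quantity $\LSat(P)$ squarely in the setting of the earlier general lemma.

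The next step is to reconcile the two saturation conventions. The ring-theoretic definition asks, for all $r,s\in R$, that $rs\in P$ imply $s\in P$; the module-theoretic notion of being left $(R\setminus\set{0})$-saturated asks, for $t\in R\setminus\set{0}$ and $m\in R$, that $tm\in P$ imply $m\in P$. For $r\neq 0$ the conditions are literally identical, and the case $r=0$ in the ring version is either vacuous (if $0\notin P$) or forces $P=R$ (if $0\in P$). In the latter degenerate case all three assertions are trivially true, since $\LSat(P)=R$ (take the witness $w=0$ in Definition \ref{definition_LSat_in_ring}) and $R$ is tautologically left saturated, so I can safely restrict the discussion to the non-degenerate setting.

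With these identifications in place, part (a) is exactly part (a) of Lemma \ref{basic_properties of_LSat_for_qmc}, part (b) is part (b) of the same, and part (c) is part (c). I do not expect any genuine obstacle: the substantive work has already been done at the more general level. The only point demanding a bit of care is the book-keeping around the zero element of $R$, needed to ensure that the passage from the multiplicative set $R\setminus\set{0}$ back to ring-theoretic left saturation truly matches the notion given in the earlier definition.
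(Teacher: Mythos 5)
Your proposal is correct and matches the paper exactly: the paper gives no separate proof of this lemma, relying precisely on the identification $\LSat(P)=\LSat_{R\setminus\set{0}}(P)$ from the preceding paragraph and then invoking \Cref{basic_properties of_LSat_for_qmc} with $T=R\setminus\set{0}$ and $M=R$. Your extra care with the $0\in P$ degeneracy and the reconciliation of the two saturation definitions is sound and, if anything, slightly more explicit than the paper.
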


Furthermore, we have the following connection to the units of the base ring:

\begin{lemma}\label{units_are_contained_in_left_saturation_closure}
	Let $R$ be a domain.
	\begin{enumerate}[(a)]
		\item
			For any non-empty subset $P$ of $R$ we have $U(R)\subseteq\LSat(P)$.
		\item
			We have $\LSat(\set{1})=U(R)$.
			Furthermore, for any $\set{1}\subseteq U\subseteq U(R)$, we have $\LSat(U)=U(R)$.
	\end{enumerate}
\end{lemma}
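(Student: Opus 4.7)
The plan is to prove (a) by an explicit construction, then derive both halves of (b) from (a), Lemma \ref{unit_group_is_saturated}, and monotonicity of $\LSat$.

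For part (a), I would pick any $u\in U(R)$ and any element $p\in P$ (which exists since $P$ is non-empty). The obvious candidate for a witness is $w:=pu\inv\in R$, for which $wu=pu\inv u=p\in P$, so $u\in\LSat(P)$ by \Cref{definition_LSat_in_ring}. This is the only step that really uses non-emptiness of $P$.

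For the first claim of (b), the inclusion $U(R)\subseteq\LSat(\{1\})$ is immediate from (a) applied to $P=\{1\}$. For the reverse inclusion, let $r\in\LSat(\{1\})$, so $wr=1$ for some $w\in R$; I want to conclude $r\in U(R)$. The key observation here, which I expect to be the only mildly subtle point, is that in a domain a one-sided inverse is automatically a two-sided inverse: from $wr=1$ I get $w(rw-1)=wrw-w=0$, and since $w\neq 0$ (as $wr=1\neq 0$) the domain property forces $rw=1$, so $r\in U(R)$.

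For the second claim of (b), fix any $U$ with $\{1\}\subseteq U\subseteq U(R)$. Then $U$ is non-empty, so (a) gives $U(R)\subseteq\LSat(U)$. For the reverse inclusion, monotonicity of $\LSat$ in its argument yields $\LSat(U)\subseteq\LSat(U(R))$, and since $U(R)$ is saturated (hence left saturated) by \Cref{unit_group_is_saturated}, \Cref{basic_properties_of_LSat_for_N=T=R}(b) gives $\LSat(U(R))=U(R)$. Combining the two inclusions finishes the proof. The only nontrivial ingredient is the domain argument turning a one-sided inverse into a two-sided one, which is elementary but worth spelling out.
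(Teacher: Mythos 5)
Your proof is correct and follows essentially the same route as the paper's: the witness $w=pu\inv$ for (a), and Dedekind-finiteness of a domain (which you spell out explicitly, where the paper merely cites it) for $\LSat(\set{1})\subseteq U(R)$. The only cosmetic difference is the last claim of (b), where you combine monotonicity with the saturatedness of $U(R)$ from \Cref{unit_group_is_saturated}, while the paper invokes the sandwich criterion of \Cref{sufficient_conditions_for_equality_of_closures} applied to $\set{1}\subseteq U\subseteq U(R)=\LSat(\set{1})$; both are valid one-liners.
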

\begin{proof}
	\begin{enumerate}[{\itshape(a)}]
		\item
			Let $u\in U(R)$ and $p\in P$.
			Then $p\cdot u\inv\cdot u=p\in P$ and $p\cdot u\inv\in R$, thus $u\in\LSat(P)$.
		\item
			Let $x\in\LSat(\set{1})$, then there exists $w\in R\setminus\set{0}$ such that $wx=1$.
			Then $x\in U(R)$ since $R$ is a domain and thus Dedekind-finite.
			Together with \textit{(a)} we have $\LSat(\set{1})=U(R)$.\\
			Additionally, $\set{1}\subseteq U\subseteq U(R)=\LSat(\set{1})$ implies $\LSat(U)=U(R)$ by \Cref{sufficient_conditions_for_equality_of_closures}.\qedhere
	\end{enumerate}
\end{proof}

\begin{ex}\label{LSat_of_Theta}
	In $\mc{D}$ (over the field $K$), we have
	\[
		\LSat(\Theta)
		=\merz{\Theta\cup\set{x,\partial}\cup U(K)}
		=\merz{(\theta+\IZ)\cup\set{x,\partial}\cup(K\setminus\set{0})}.
	\]
	To see this, let $S:=\merz{(\theta+\IZ)\cup\set{x,\partial}\cup U(K)}$.
	Since $\theta=x\partial\in\Theta$, we clearly have $\set{x,\partial}\subseteq\LSat(\Theta)$ and therefore $S\subseteq\LSat(\Theta)$ (note that $U(K)=K\setminus\set{0}=U(\mc{D})$ is always contained in $\LSat(\Theta)$ by \Cref{units_are_contained_in_left_saturation_closure}).\\
	To see the other inclusion, consider that by \Cref{theta_proposition}, every element $s\in S$ can be written in the form $s=ty^n$, where $y\in\set{x,\partial}$, $n\in\IN_0$ and $t\in\Theta$.
	Then \cite{homogfac} implies that every other non-trivial factorization of $s$ can be derived by using the commutation rules given in \Cref{theta_proposition} and rewriting $\theta$ respectively $\theta+1$ as $x\partial$ respectively $\partial x$.
	But all occurring factors are already contained in $S$, thus $\LSat(\Theta)\subseteq S$ (the trivial factorizations correspond to scattering units between the factors).
\end{ex}

\begin{rem}
	In the following, we omit units when giving generators of saturation closures for brevity, writing e.g. $\LSat(\Theta)=\merz{(\theta+\IZ)\cup\set{x,\partial}}$ in the situation of \Cref{LSat_of_Theta}.
\end{rem}

Now we consider left saturation of left Ore sets.
As a first application, this gives us a complete characterization of the units in the associated localization:

\begin{prop}\label{characterization_of_units_in_localization}
	Let $S\subseteq R$ be an Ore set in a domain $R$ and $(s,r)\in S\inv R$.
	The following are equivalent:
	\begin{enumerate}[(1)]
		\item
			$(s,r)\in U(S\inv R)$.
		\item
			$(1,r)\in U(S\inv R)$.
		\item
			$r\in\LSat(S)$.
	\end{enumerate}
\end{prop}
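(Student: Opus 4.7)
The plan is to exploit the factorization $(s,r) = (s,1)\cdot(1,r)$ in $S\inv R$ to reduce everything to the case of denominator $1$, and then to read off $(1,r)$ being a unit as the existence of a left multiplier sending $r$ into $S$.

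First I would check, directly from the multiplication formula in \Cref{thm_construction}, that $(s,1)\cdot(1,s) = 1_{S\inv R} = (1,s)\cdot(s,1)$, so that $(s,1)\in U(S\inv R)$ for every $s\in S$, and that the factorization $(s,r) = (s,1)\cdot(1,r)$ holds (both using the trivial Ore witnesses $\tilde{s}=\tilde{r}=1$). Since multiplication by a unit preserves unit-ness in any ring, this gives $(1)\Leftrightarrow(2)$ immediately.

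For $(3)\Rightarrow(2)$, given $w\in R$ with $s_0 := wr\in S$, I would compute $(s_0,w)\cdot(1,r)$ via the multiplication formula with witnesses $\tilde{s}=1\in S$ and $\tilde{r}=w$ (which satisfy $\tilde{s}\cdot w = \tilde{r}\cdot 1$), obtaining $(s_0,wr) = (s_0,s_0) = 1_{S\inv R}$. Hence $(1,r)$ is left-invertible in $S\inv R$; since $S\inv R$ is a domain (stated in the lemma before \Cref{thm_construction} via the item ``$S\inv R$ is a domain'') and every domain is Dedekind-finite (from $ab=1$ one gets $a(ba-1)=0$, so $ba=1$), left-invertibility already implies being a unit.

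For $(2)\Rightarrow(3)$, suppose $(s',r')\cdot(1,r) = 1_{S\inv R}$. Applying the multiplication formula with the valid choice $\tilde{s}=1$, $\tilde{r}=r'$ collapses the product to $(s',r'r)$, and the characterization ``$(a,b)=1 \Leftrightarrow a=b$'' from the lemma preceding \Cref{thm_construction} forces $r'r = s'\in S$, exhibiting $w := r'\in R$ as a witness for $r\in\LSat(S)$. The main subtlety in the whole argument is the appeal to Dedekind-finiteness in $(3)\Rightarrow(2)$: producing a right inverse of $(1,r)$ directly by Ore manipulations is awkward, since the natural candidate $(s_0,w)$ does not come with a clean Ore witness on the other side, and invoking the domain property of $S\inv R$ sidesteps this obstacle cleanly.
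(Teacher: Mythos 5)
Your proposal is correct and follows essentially the same route as the paper: the factorization $(s,r)=(s,1)\cdot(1,r)$ with $(s,1)\inv=(1,s)$ for $(1)\Leftrightarrow(2)$, and the explicit left inverse $(wr,w)$ of $(1,r)$ for $(3)\Rightarrow(2)$ together with reading off $wr\in S$ from $(s',r'r)=1$ for $(2)\Rightarrow(3)$. The only difference is that you make explicit the Dedekind-finiteness step (left-invertible implies unit in a domain) that the paper's proof uses silently, which is a welcome bit of extra care.
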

\begin{proof}
	We always have the basic factorization $(s,r)=(s,1)\cdot(1,r)$.
	The equivalence of \textit{(1)} and \textit{(2)} is then due to the fact that $(s,1)$ is a unit in $S\inv R$ with inverse $(1,s)$.
	Now we prove the equivalence of \textit{(2)} and \textit{(3)}:\\
	First, let $(1,r)\in U(S\inv R)$.
	Then there exists $(s,w)\in S\inv R$ such that $(1,1)=(s,w)\cdot(1,r)=(s,wr)$, which implies $wr=s\in S$ and thus $r\in\LSat(S)$.\\
	On the other hand, let $r\in\LSat(S)$ with $w\in R$ such that $wr\in S$.
	Then $(wr,w)\in S\inv R$ satisfies $(wr,w)\cdot(1,r)=(wr,wr)=(1,1)$ and thus $(1,r)\in U(S\inv R)$.
\end{proof}

\section{Homomorphisms of localized rings}

Before we continue investigating the connection of $S$ and $\LSat(S)$ further in \Cref{section_localization_at_left_saturation}, we characterize homomorphisms between localizations of the same ring $R$ that leave $R$ unchanged:

\begin{definition}
	Let $S$ and $T$ be left Ore sets in a domain $R$.
	A homomorphism $\varphi:S\inv R\rightarrow T\inv R$ \emph{fixes} $R$ or is an $R$-\emph{fixing homomorphism} if $\varphi\circ\rho_{S,R}=\rho_{T,R}$.
\end{definition}

If $\varphi$ is an $R$-fixing isomorphism, then $\varphi\inv$ fixes $R$ as well.

\begin{restatable}{lemma}{omegalemma}\label{omega_lemma}
	Let $S$ and $T$ be left Ore sets in a domain $R$ such that $S\subseteq\LSat(T)$.
	For any $s\in S$ choose a $w_s\in R$ such that $w_ss\in T$.
	Then the map
	\[
		\omega:=\omega_{S,T,R}:S\inv R\rightarrow T\inv R,~(s,r)\mapsto(w_ss,w_sr),
	\]
	\begin{enumerate}[(a)]
		\item
			is independent of the choice of the $w_s$ and
		\item
			an injective $R$-fixing ring homomorphism.
	\end{enumerate}
\end{restatable}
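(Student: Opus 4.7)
The plan is to verify the claims directly from the equivalence relation of \Cref{thm_construction}, using the left Ore condition for $T$ together with the fact that $R$ is a domain at each step. (A slicker alternative would proceed via the universal property of Ore localization: for every $s\in S\subseteq\LSat(T)$ the element $\rho_{T,R}(s)$ is left-invertible in $T\inv R$ and hence invertible since $T\inv R$ is a domain, so a unique ring homomorphism $\omega$ with $\omega\circ\rho_{S,R}=\rho_{T,R}$ exists; but the direct approach matches the explicit formula baked into the statement.)

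The main obstacle is well-definedness of $\omega$: equivalent pairs in $S\inv R$ must be sent to equivalent pairs in $T\inv R$. Starting from $(s_1,r_1)\sim(s_2,r_2)$ in $S\inv R$ witnessed by $\tilde s\in S$ and $\tilde r\in R$ with $\tilde s s_2=\tilde r s_1$ and $\tilde s r_2=\tilde r r_1$, I would set $u:=\tilde s s_2=\tilde r s_1\in S$ and use $S\subseteq\LSat(T)$ to choose $w\in R$ with $wu\in T$. The pair $(wu,w\tilde s r_2)=(wu,w\tilde r r_1)\in T\times R$ then serves as a bridge. To show it represents the same class as $(w_{s_2}s_2,w_{s_2}r_2)$ in $T\inv R$, I apply the left Ore condition for $T$ to $wu\in T$ and $w_{s_2}s_2\in R$ to obtain $\alpha\in T$ and $\beta\in R$ with $\alpha w_{s_2}s_2=\beta wu=\beta w\tilde s s_2$. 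Right-cancelling $s_2\neq 0$ in the domain $R$ yields $\alpha w_{s_2}=\beta w\tilde s$, and right-multiplying by $r_2$ produces the matching second-coordinate identity. The analogous argument (with $\tilde r$ and $s_1$ in place of $\tilde s$ and $s_2$) gives the equivalence to $(w_{s_1}s_1,w_{s_1}r_1)$, and transitivity closes the argument.

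Part (a) then drops out as a special case: applied to $(s,r)\sim(s,r)$ with two different witnesses $w_s$ and $w_s'$, the bridge argument shows $(w_ss,w_sr)=(w_s's,w_s'r)$ in $T\inv R$. For part (b), with well-definedness in hand the remaining verifications are direct. The $R$-fixing property follows by picking $w_1:=1\in T$, giving $\omega(\rho_{S,R}(r))=\omega((1,r))=(1,r)=\rho_{T,R}(r)$ (part (a) justifies this specific choice of witness). Injectivity is immediate: $\omega((s,r))=0$ forces $w_sr=0$ by the basic fact that a fraction vanishes iff its numerator does, hence $r=0$ in the domain $R$ (noting $w_s\neq 0$ since $w_ss\in T\not\ni 0$), hence $(s,r)=0$. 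For the ring-homomorphism axioms, I would compute $\omega$ applied to sums and products defined via \Cref{thm_construction}, use the left Ore condition for $T$ to produce common-denominator witnesses matching the multiplication and addition rules in $T\inv R$, and appeal to part (a) to identify the resulting representatives.
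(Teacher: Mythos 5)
Your proposal is correct and follows essentially the same direct-verification strategy as the paper's appendix proof: check well-definedness (and hence independence of the $w_s$) against the defining equivalence relation using the left Ore condition on $T$ and right cancellation in the domain, then verify the homomorphism axioms, $R$-fixing and injectivity by the same pattern. Your bridge element $(wu,\,w\tilde{s}r_2)$ with $u=\tilde{s}s_2=\tilde{r}s_1\in S$ and $wu\in T$ is a slightly cleaner organization of the well-definedness step — the paper instead runs a single chain that additionally invokes the left Ore condition on $S$ — but the substance is the same.
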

\begin{proof}
	Due to its rather technical nature, the proof can be found in \Cref{appendix_omega_lemma}.
\end{proof}

\begin{thm}\label{characterization_of_R-fixing_homomorphisms_via_LSat}
	Let $S$ and $T$ be left Ore sets in a domain $R$.
	Then the following are equivalent:
	\begin{enumerate}[(1)]
		\item
			$S\subseteq\LSat(T)$.
		\item
			$\omega$ is the unique $R$-fixing ring homomorphism from $S\inv R$ to $T\inv R$.
	\end{enumerate}
\end{thm}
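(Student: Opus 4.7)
The plan is to prove the two implications separately, leaning on \Cref{omega_lemma} for one direction and on \Cref{characterization_of_units_in_localization} for the other.

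For the implication \textit{(1)}$\Rightarrow$\textit{(2)}, the existence of an $R$-fixing ring homomorphism is handed to us by \Cref{omega_lemma}, so only uniqueness needs argument. I would suppose $\psi : S\inv R\rightarrow T\inv R$ is any $R$-fixing ring homomorphism and show that $\psi$ must agree with $\omega$ on every element. The key observation is that for any $s\in S$, the element $\rho_{S,R}(s)=(1,s)\in S\inv R$ is a unit (with inverse $(s,1)$), and since ring homomorphisms preserve units, $\psi((s,1))=\psi((1,s))\inv=\rho_{T,R}(s)\inv$, which is fully determined by the $R$-fixing condition. Using the basic factorization $(s,r)=(s,1)\cdot(1,r)$, one then gets $\psi((s,r))=\rho_{T,R}(s)\inv\cdot\rho_{T,R}(r)$. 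A quick verification that $\omega((s,r))=(w_ss,w_sr)$ also equals $\rho_{T,R}(s)\inv\cdot\rho_{T,R}(r)$ in $T\inv R$ (which follows from the identity $(w_ss,w_sr)=(1,w_ss)\inv\cdot(1,w_sr)$ together with $w_ss\in T$) closes the uniqueness argument.

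For the implication \textit{(2)}$\Rightarrow$\textit{(1)}, I would exploit the mere existence of an $R$-fixing ring homomorphism $\varphi:S\inv R\rightarrow T\inv R$ (which is asserted by \textit{(2)}). Fix $s\in S$. Since $(1,s)\in U(S\inv R)$, its image $\varphi((1,s))=\rho_{T,R}(s)=(1,s)\in T\inv R$ must be a unit in $T\inv R$. Applying \Cref{characterization_of_units_in_localization} to $T\inv R$ with numerator $s$ yields $s\in\LSat(T)$, and since this holds for every $s\in S$, we conclude $S\subseteq\LSat(T)$.

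I expect no serious obstacle here: the (1)$\Rightarrow$(2) direction reduces to the standard universal-property argument that a localization homomorphism is determined by what it does on the base ring, while (2)$\Rightarrow$(1) is an immediate application of \Cref{characterization_of_units_in_localization}. The only point requiring mild care is to explicitly rewrite $\omega((s,r))$ in a form that makes the comparison with an arbitrary $\psi$ transparent, which amounts to one application of the equivalence relation defining $T\inv R$.
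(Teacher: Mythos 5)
Your proof is correct and follows essentially the same route as the paper: uniqueness in \textit{(1)}$\Rightarrow$\textit{(2)} comes from the factorization $(s,r)=(s,1)\cdot(1,r)$ together with the fact that any $R$-fixing homomorphism must send $(s,1)=(1,s)\inv$ to $\rho_{T,R}(s)\inv$, and existence is \Cref{omega_lemma}. The only cosmetic difference is in \textit{(2)}$\Rightarrow$\textit{(1)}, where you invoke \Cref{characterization_of_units_in_localization} while the paper performs the equivalent computation $1=\omega((s,s))=(t,rs)$ inline; your variant has the small virtue of needing only the existence of some $R$-fixing homomorphism rather than the explicit form of $\omega$.
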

\begin{proof}
	\begin{description}
		\implication{1}{2}
			By \Cref{omega_lemma} the map $\omega$ is an $R$-fixing ring homomorphism.
			Consider another $R$-fixing ring homomorphism $\varphi:S\inv R\rightarrow T\inv R$ and $(s,r)\in S\inv R$, then there exists $w\in R$ such that $ws\in T$.
			Now $(ws,w)\in T\inv R$ is the inverse of $(1,s)=\rho_{T,R}(s)$ in $T\inv R$, thus
			\[\begin{split}
				\varphi((s,r))
				&=\varphi((s,1)\cdot(1,r))
				=\varphi((s,1))\cdot\varphi((1,r))
				=\varphi((s,1))\cdot(1,r)\\
				&=\varphi((1,s)\inv)\cdot(1,r)
				=\varphi((1,s))\inv\cdot(1,r)
				=(1,s)\inv\cdot(1,r)\\
				&=(ws,w)\cdot(1,r)
				=(ws,wr)
				=\omega((s,r)).
			\end{split}\]
		\implication{2}{1}
			Let $s\in S$, then there exist $t\in T$ and $r\in R$ such that $\omega((s,1))=(t,r)$.
			Now
			\[
				1_{T\inv R}
				=\omega(1_{S\inv R})
				=\omega((s,s))
				=\omega((s,1))\cdot\omega((1,s))
				=(t,r)\cdot(1,s)
				=(t,rs),
			\]
			since $\omega$ fixes $R$.
			Thus $rs=t\in T$ and $s\in\LSat(T)$, which implies $S\subseteq\LSat(T)$.\qedhere
	\end{description}
\end{proof}

\begin{rem}
	In the light of the previous theorem it is justified to call $\omega$ the \emph{canonical} $R$-fixing homomorphism from $S\inv R$ to $T\inv R$.
	If $S\subseteq T$, then $\omega((s,r))=(s,r)$.
	The structural homomorphism $\rho_{T,R}$ is the special case where $S=\set{1}$.
\end{rem}

\begin{cor}\label{characterization_of_R-fixing_isomorphisms_via_LSat}
	Let $S$ and $T$ be left Ore sets in a domain $R$.
	Then the following are equivalent:
	\begin{enumerate}[(1)]
		\item
			$\LSat(S)=\LSat(T)$.
		\item
			$\omega$ is the unique $R$-fixing ring isomorphism from $S\inv R$ to $T\inv R$.
	\end{enumerate}
\end{cor}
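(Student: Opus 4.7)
The plan is to derive this corollary from \Cref{characterization_of_R-fixing_homomorphisms_via_LSat} by applying it twice, once with the roles of $S$ and $T$ swapped, and to leverage the uniqueness statement to handle the ``isomorphism'' part.

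For the direction $(1)\Rightarrow(2)$, assume $\LSat(S)=\LSat(T)$. Since $S\subseteq\LSat(S)=\LSat(T)$, \Cref{characterization_of_R-fixing_homomorphisms_via_LSat} guarantees that $\omega_{S,T,R}$ is the unique $R$-fixing ring homomorphism from $S\inv R$ to $T\inv R$. Symmetrically, $T\subseteq\LSat(T)=\LSat(S)$ produces a unique $R$-fixing ring homomorphism $\omega_{T,S,R}:T\inv R\to S\inv R$. The composition $\omega_{T,S,R}\circ\omega_{S,T,R}$ is an $R$-fixing homomorphism from $S\inv R$ to itself, and since $S\subseteq S\subseteq\LSat(S)$, another invocation of \Cref{characterization_of_R-fixing_homomorphisms_via_LSat} (with $T$ replaced by $S$) says that there is a unique such endomorphism. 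As the identity $\operatorname{id}_{S\inv R}$ clearly qualifies, both it and $\omega_{T,S,R}\circ\omega_{S,T,R}$ must coincide. The analogous argument on $T\inv R$ shows the other composition is the identity as well, so $\omega_{S,T,R}$ is an $R$-fixing ring isomorphism, and its uniqueness as an isomorphism follows \emph{a fortiori} from its uniqueness as a homomorphism.

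For $(2)\Rightarrow(1)$, assume $\omega=\omega_{S,T,R}$ is an $R$-fixing ring isomorphism from $S\inv R$ to $T\inv R$. In particular it is an $R$-fixing ring homomorphism, so \Cref{characterization_of_R-fixing_homomorphisms_via_LSat} gives $S\subseteq\LSat(T)$. Its inverse $\omega\inv:T\inv R\to S\inv R$ is again an $R$-fixing ring homomorphism, because $\omega\inv\circ\rho_{T,R}=\omega\inv\circ\omega\circ\rho_{S,R}=\rho_{S,R}$. Applying \Cref{characterization_of_R-fixing_homomorphisms_via_LSat} with the roles of $S$ and $T$ interchanged then yields $T\subseteq\LSat(S)$. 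Combining both inclusions via \Cref{sufficient_conditions_for_equality_of_closures} delivers $\LSat(S)=\LSat(T)$.

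The only real content beyond bookkeeping is the compositional argument showing that $\omega_{T,S,R}\circ\omega_{S,T,R}=\operatorname{id}$, which is the step most prone to being fumbled. Here the uniqueness clause in \Cref{characterization_of_R-fixing_homomorphisms_via_LSat} is doing the work; attempting to verify the composition pointwise through the explicit formula $\omega(s,r)=(w_ss,w_sr)$ would be cumbersome, but the uniqueness argument reduces it to the trivial observation that the identity map is $R$-fixing.
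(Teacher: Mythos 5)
Your proof is correct. The direction \emph{(2)}$\Rightarrow$\emph{(1)} is exactly the paper's argument: apply \Cref{characterization_of_R-fixing_homomorphisms_via_LSat} to $\omega$ and to $\omega\inv$ and combine via \Cref{sufficient_conditions_for_equality_of_closures}. For \emph{(1)}$\Rightarrow$\emph{(2)} you take a genuinely different route. The paper invokes the injectivity already established in \Cref{omega_lemma} and then proves surjectivity by an explicit preimage computation: given $(t,r)\in T\inv R$, it finds $w$ with $wt\in S$ and $q$ with $qwt\in T$ and checks that $(t,r)=(qwt,qwr)=\omega((wt,wr))$. You instead exploit the uniqueness clause of \Cref{characterization_of_R-fixing_homomorphisms_via_LSat}: both $\omega_{T,S,R}\circ\omega_{S,T,R}$ and $\operatorname{id}_{S\inv R}$ are $R$-fixing endomorphisms of $S\inv R$, and since $S\subseteq\LSat(S)$ there is only one such map, so they coincide; symmetrically for the other composition. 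This is a clean universal-property-style argument that yields bijectivity without touching the explicit formula for $\omega$ (and without even needing the injectivity statement of \Cref{omega_lemma}); the paper's version, in exchange, is more constructive in that it exhibits the preimage of an arbitrary fraction, which is the kind of information one wants when actually computing with these maps. Both are valid, and your closing remark correctly identifies the uniqueness clause as the engine of your argument.
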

\begin{proof}
	\begin{description}
		\implication{1}{2}
			The only thing left to show is surjectivity of $\omega$, the rest follows from \Cref{characterization_of_R-fixing_homomorphisms_via_LSat}: let $(t,r)\in T\inv R$, then there exists $w\in R$ such that $wt\in S$ since $T\subseteq\LSat(S)$.
			Furthermore there exists $q\in R$ such that $qwt\in T$ since $S\subseteq\LSat(T)$.
			Now we have
			\[
				(t,r)
				=(qwt,qwr)
				=\omega((wt,wr))
				\in\im(\omega),
			\]
			thus $\omega$ is surjective.
		\implication{2}{1}
			Applying \Cref{characterization_of_R-fixing_homomorphisms_via_LSat} to $\omega$ we get $S\subseteq\LSat(T)$, applying the same result to $\omega\inv$ gives us $T\subseteq\LSat(S)$, which implies $\LSat(S)=\LSat(T)$ by \Cref{sufficient_conditions_for_equality_of_closures}.\qedhere
	\end{description}
\end{proof}

\section{Localization at left saturation}\label{section_localization_at_left_saturation}

Given an arbitrary subset $P$ of a domain $R$, $\LSat(P)$ is not (right-)saturated in general:

\begin{ex}
	The set $T:=\LSat(\set{x\partial+1})$ is not (right-)saturated in $\mc{D}$.
	To see this, assume that $T$ is saturated, then $\partial\in T$ since $\partial x=x\partial+1$.
	Since $T$ is the left saturation of $x\partial+1$ there exists a $w\in\mc{D}$ such that $w\partial=x\partial+1$, or equivalently, $(w-x)\partial=1$.
	This implies that $\partial$ is a unit in $\mc{D}$, which is a contradiction.
\end{ex}

In the case where $S$ is a left Ore set, we can show that $\LSat(S)$ is indeed saturated via a little trick that involves the localization $S\inv R$:

\begin{prop}\label{LSat_of_Ore_set_is_saturated}
	Let $S\subseteq R$ be a left Ore set in a domain $R$.
	Then $\LSat(S)$ is saturated.
\end{prop}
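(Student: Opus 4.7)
The plan is to lift the question into the localization $S\inv R$, where the claim becomes the already-established fact that the unit group of a domain is saturated (Lemma \ref{unit_group_is_saturated}). The bridge is Proposition \ref{characterization_of_units_in_localization}, which identifies $\LSat(S)$ with the set of numerators $r\in R$ such that $(1,r)$ is a unit in $S\inv R$.

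First, I would dispose of left saturation: this is not new information, since $\LSat(S)=\LSat_R^R(S)$ is left $R$-saturated by Lemma \ref{basic_properties_of_LSat_for_N=T=R}(a), and every multiplicative subset of $R$ is in particular a subset, so the conclusion follows. Thus the entire content of the proposition is right saturation, i.e.\ that $a,b\in R$ with $ab\in\LSat(S)$ forces $a\in\LSat(S)$ (and, while we are at it, $b\in\LSat(S)$ by the left case).

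Next, assuming $ab\in\LSat(S)$, Proposition \ref{characterization_of_units_in_localization} converts this to $(1,ab)\in U(S\inv R)$. A short check with the multiplication rule of Theorem \ref{thm_construction}, taking $\tilde{s}=1$ and $\tilde{r}=a$ in the defining relation $\tilde{s}r_1=\tilde{r}s_2$, shows
\[
	(1,a)\cdot(1,b)=(1,ab).
\]
Since $S\inv R$ is a domain, Lemma \ref{unit_group_is_saturated} tells us that $U(S\inv R)$ is saturated, so both factors $(1,a)$ and $(1,b)$ lie in $U(S\inv R)$. Applying Proposition \ref{characterization_of_units_in_localization} once more in the reverse direction, we conclude $a,b\in\LSat(S)$, which establishes right saturation.

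The argument has essentially no hard step: the whole difficulty has been outsourced to Proposition \ref{characterization_of_units_in_localization}, which is where the Ore condition on $S$ was actually used (to construct $S\inv R$ and to have the explicit inversion witness). The only place one has to be slightly careful is the elementary computation $(1,a)\cdot(1,b)=(1,ab)$, because the multiplication in the Ore localization is defined via the Ore condition rather than as a naive product of fractions; but in this particular case the Ore witness can be chosen trivially as above, so no genuine computation is needed.
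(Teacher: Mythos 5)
Your proof is correct and follows essentially the same route as the paper: pass to $S\inv R$ via \Cref{characterization_of_units_in_localization}, factor $(1,ab)=(1,a)\cdot(1,b)$, and invoke \Cref{unit_group_is_saturated}. The only difference is cosmetic — the paper does not treat left saturation separately, since the unit-group argument already yields both $a\in\LSat(S)$ and $b\in\LSat(S)$ at once, as you yourself note.
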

\begin{proof}
	Let $p,q\in R$ such that $r:=pq\in\LSat(S)$.
	By \Cref{characterization_of_units_in_localization}, we have that
	\[
		(1,p)\cdot(1,q)
		=(1,pq)
		=(1,r)
		\in U(S\inv R).
	\]
	Since $U(S\inv R)$ is saturated by \Cref{unit_group_is_saturated}, we have $(1,p),(1,q)\in U(S\inv R)$.
	But then $p,q\in\LSat(S)$ by \Cref{characterization_of_units_in_localization}.
\end{proof}

The fact that $\LSat(S)$ is saturated on both sides gives us further insight into the defining equivalence relation of Ore localization and its variants:

\begin{lemma}\label{eqrel_characterization}
	Let $S$ be a left Ore set in a domain $R$ and $(s_1,r_1),(s_2,r_2)\in S\times R$.
	The following are equivalent:
	\begin{enumerate}[(1)]
		\item
			$(s_1,r_1)\sim(s_2,r_2)$, that is, there exist $\tilde{s}\in S$ and $\tilde{r}\in R$ such that $\tilde{s}s_2=\tilde{r}s_1$ and $\tilde{s}r_2=\tilde{r}r_1$.
		\item
			For all $\hat{s}\in S$ and $\hat{r}\in R$ such that $\hat{s}s_2=\hat{r}s_1$ there exists $\bar{s}\in S$ such that $\bar{s}\hat{s}r_2=\bar{s}\hat{r}r_1$.
		\item
			There exist $\hat{s},\bar{s}\in S$ and $\hat{r}\in R$ such that $\hat{s}s_2=\hat{r}s_1$ and $\bar{s}\hat{s}r_2=\bar{s}\hat{r}r_1$.
		\item
			$(s_1,r_1)\approx(s_2,r_2)$, that is, there exist $a,b\in R$ such that $as_2=bs_1\in S$ and $ar_2=br_1$.
		\item
			There exist $\mathring{s}\in S$ and $\mathring{x}\in\LSat(S)$ such that $\mathring{s}s_2=\mathring{x}s_1$ and $\mathring{s}r_2=\mathring{x}r_1$.
	\end{enumerate}
\end{lemma}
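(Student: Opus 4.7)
The plan is to prove the equivalences via a cycle through (1), after first observing that several implications are essentially formal. Specifically, I would organize the argument around (1) and handle three quick equivalences (1) $\Leftrightarrow$ (3), (1) $\Leftrightarrow$ (4), (1) $\Leftrightarrow$ (5), then settle (1) $\Leftrightarrow$ (2) separately. For (1) $\Rightarrow$ (3), just take $\hat{s}:=\tilde{s}$, $\hat{r}:=\tilde{r}$ and $\bar{s}:=1_R$; for (3) $\Rightarrow$ (1) set $\tilde{s}:=\bar{s}\hat{s}\in S$ and $\tilde{r}:=\bar{s}\hat{r}$ and multiply the first equation of (3) on the left by $\bar{s}$. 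For (1) $\Rightarrow$ (5) put $\mathring{s}:=\tilde{s}$, $\mathring{x}:=\tilde{r}$ and note $\mathring{x}s_1=\tilde{s}s_2\in S$, witnessing $\mathring{x}\in\LSat(S)$; the reverse direction is immediate from $\LSat(S)\subseteq R$. For (1) $\Rightarrow$ (4) take $a:=\tilde{s}$, $b:=\tilde{r}$, and observe $as_2=\tilde{s}s_2\in S$ automatically.

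The genuinely non-formal direction is (4) $\Rightarrow$ (1), where the challenge is that the witness $a$ lies in $R$ rather than in $S$. The key trick is to apply the left Ore condition to the element $as_2\in S$ and $s_2\in R$, obtaining $u\in S$ and $v\in R$ with $us_2=v(as_2)=(va)s_2$; since $R$ is a domain and $s_2\neq 0$, we can cancel on the right to conclude $u=va\in S$. Setting $\tilde{s}:=u$ and $\tilde{r}:=vb$ then yields $\tilde{s}s_2=vas_2=vbs_1=\tilde{r}s_1$ and $\tilde{s}r_2=var_2=vbr_1=\tilde{r}r_1$, as required.

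The remaining equivalence (1) $\Leftrightarrow$ (2) is the main obstacle, since (2) demands a universally quantified statement over all $\hat{s}\in S$ and $\hat{r}\in R$ satisfying $\hat{s}s_2=\hat{r}s_1$. For (1) $\Rightarrow$ (2), I would apply the left Ore condition to $\tilde{s}\in S$ and $\hat{s}\in R$, producing $u\in S$ and $v\in R$ with $u\hat{s}=v\tilde{s}$; then $u\hat{s}s_2=v\tilde{s}s_2=v\tilde{r}s_1$ and $u\hat{s}s_2=u\hat{r}s_1$ force $v\tilde{r}s_1=u\hat{r}s_1$, and canceling $s_1$ by the domain property gives $v\tilde{r}=u\hat{r}$; multiplying by $r_2$ and using $\tilde{s}r_2=\tilde{r}r_1$ then shows $\bar{s}:=u$ does the job. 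For the converse, use the Ore condition on $s_1\in S$ and $s_2\in R$ to produce an initial pair $(\hat{s},\hat{r})$ with $\hat{s}s_2=\hat{r}s_1$, apply (2) to extract a $\bar{s}\in S$, and invoke the already established (3) $\Rightarrow$ (1).
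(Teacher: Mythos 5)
Your proposal is correct in substance and considerably more complete than the paper's own proof, which dismisses the equivalence of \textit{(1)}--\textit{(4)} as ``more or less well-known'' and only writes out \textit{(1)}$\Rightarrow$\textit{(5)}. Your organization around \textit{(1)}, the cancellation trick for \textit{(4)}$\Rightarrow$\textit{(1)} (applying the left Ore condition to $as_2\in S$ and $s_2\in R$ and cancelling $s_2$ on the right to manufacture $va\in S$), and the handling of the universally quantified statement \textit{(2)} are all sound; the verifications of \textit{(1)}$\Leftrightarrow$\textit{(3)} and of \textit{(2)}$\Rightarrow$\textit{(1)} via an Ore-produced initial pair are exactly right.

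The one step you should repair is the justification in \textit{(1)}$\Rightarrow$\textit{(5)}. By \Cref{definition_LSat_in_ring}, $\mathring{x}\in\LSat(S)$ means there exists $w\in R$ with $w\mathring{x}\in S$, i.e.\ a \emph{left} multiplier of $\mathring{x}$ landing in $S$. The relation $\mathring{x}s_1=\tilde{s}s_2\in S$ exhibits $\mathring{x}$ as a \emph{left} factor of an element of $S$, so by the bare definition it only witnesses $s_1\in\LSat(S)$, not $\mathring{x}\in\LSat(S)$. To conclude $\mathring{x}\in\LSat(S)$ you must invoke \Cref{LSat_of_Ore_set_is_saturated}: $\LSat(S)$ is saturated on both sides (a fact proved via the unit group of $S\inv R$, not a formal consequence of the definition), and right-saturatedness applied to $\mathring{x}s_1\in S\subseteq\LSat(S)$ gives $\mathring{x}\in\LSat(S)$. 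This is precisely the content item \textit{(5)} is meant to showcase --- the lemma is stated immediately after, and as a consequence of, that proposition --- so the fix is a one-line citation, but as written the word ``witnessing'' glosses over the only genuinely non-trivial input to that implication.
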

\begin{proof}
	The first four equivalence relations are more or less well-known ways to construct Ore localizations.
	Since \textit{(5)} is a special case of \textit{(1)}, the only thing to show is that \textit{(1)} implies \textit{(5)}: from $\mathring{s}:=\tilde{s}\in S$ we know that $\tilde{r}s_1=\tilde{s}s_2\in S$ and therefore $\mathring{x}:=\tilde{r}\in\LSat(S)$.
\end{proof}

In particular, if $S$ is already saturated, then for any pair $s_1,s_2\in S$ we can $\tilde{s}_1,\tilde{s}_2\in S$ realizing the left Ore condition for $s_1$ and $s_2$.

Left saturation preserves and reflects the left Ore condition:

\begin{lemma}\label{LSat_preserves_and_reflects_left_Ore_condition}
	Let $S$ be a subset of a domain $R$.
	The following are equivalent:
	\begin{enumerate}[(1)]
		\item
			$S$ satisfies the left Ore condition in $R$.
		\item
			$\LSat(S)$ satisfies the left Ore condition in $R$.
	\end{enumerate}
\end{lemma}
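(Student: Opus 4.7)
The plan is to verify each direction directly from the definition $\LSat(S)=\set{r\in R\mid\exists~w\in R:wr\in S}$, exploiting the two halves of the definition: namely that $S\subseteq\LSat(S)$, and that every $x\in\LSat(S)$ can be multiplied on the left by a suitable $w\in R$ to land in $S$. Both implications should follow by a short chain of rewrites with no deep input beyond the given Ore condition.

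For \textit{(1)} $\Rightarrow$ \textit{(2)}, I would start with $x\in\LSat(S)$ and $r\in R$, choose $w\in R$ with $wx\in S$, and then apply the left Ore condition of $S$ to the pair $(wx,r)\in S\times R$ to obtain $s'\in S$ and $r'\in R$ with $s'r=r'(wx)=(r'w)x$. Since $s'\in S\subseteq\LSat(S)$, the pair $(s',r'w)$ realizes the left Ore condition for $\LSat(S)$ at $(x,r)$.

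For \textit{(2)} $\Rightarrow$ \textit{(1)}, I would take $s\in S$ (so $s\in\LSat(S)$) and $r\in R$, invoke the Ore condition for $\LSat(S)$ to get $y\in\LSat(S)$ and $r'\in R$ with $yr=r's$, and then choose $w\in R$ with $wy\in S$. Left-multiplying the equation by $w$ yields $(wy)r=(wr')s$ with $wy\in S$, which is exactly the Ore condition in $S$ for the pair $(s,r)$.

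The argument is essentially symmetric, and the only subtlety is to make sure in \textit{(2)} $\Rightarrow$ \textit{(1)} that the replacement multiplier is genuinely in $S$ rather than merely in $\LSat(S)$; this is precisely what the extra left multiplication by $w$ accomplishes. I do not anticipate any real obstacle in writing this up.
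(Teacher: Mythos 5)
Your proposal is correct and matches the paper's proof essentially verbatim: both directions use exactly the same manipulations, clearing $x\in\LSat(S)$ into $S$ via a left multiplier $w$ before (respectively after) invoking the given Ore condition. No issues.
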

\begin{proof}
	\begin{description}
		\item[\normalfont\textit{(1)}$\Rightarrow$\textit{(2)}:]
			Let $r\in R$ and $x\in\LSat(S)$, then by definition $wx\in S$ for some $w\in R$.
			By the left Ore condition on $S$, there exist $\tilde{r}\in R$ and $\tilde{s}\in S$ such that $\tilde{r}wx=\tilde{s}r$.
			Since $\tilde{r}w\in R$ and $\tilde{s}\in S\subseteq\LSat(S)$, we have that $\LSat(S)$ satisfies the left Ore condition in $R$.
		\item[\normalfont\textit{(2)}$\Rightarrow$\textit{(1)}:]
			Let $r\in R$ and $s\in S\subseteq\LSat(S)$.
			By the left Ore condition on $\LSat(S)$, there exist $\tilde{r}\in R$ and $x\in\LSat(S)$ such that $\tilde{r}s=xr$.
			By definition, $wx\in S$ for some $w\in R$, thus $w\tilde{r}s=wxr$.
			Since $wx\in S$ and $w\tilde{r}\in R$, we have that $S$ satisfies the left Ore condition in $R$.\qedhere
	\end{description}
\end{proof}

Unfortunately, left saturation does not preserve multiplicative closedness in general:

\begin{ex}
	Consider $S=\merz{\theta}=\merz{x\partial}$ in $\mc{D}$.
	Clearly, $\partial\in\LSat(S)$, but $\partial^2\notin\LSat(S)$: assume that there exist $w\in\mc{D}$ and $k\in\IN_0$ such that $w\partial^2=\theta^k=(x\partial)^k$.
	Then $k\geq2$ and $w\partial=(x\partial)^{k-1}x=x(\partial x)^{k-1}=x(x\partial+1)^{k-1}=f\partial+x$ for some $f\in\mc{D}$.
	This implies that $x\in\mc{D}\partial$, which is a contradiction.
\end{ex}

Fortunately, the left Ore condition is sufficient to overcome this obstacle:

\begin{prop}
	Let $S$ be a left Ore set in a domain $R$.
	Then $\LSat(S)$ is a left Ore set in $R$ and $S\inv R\cong\LSat(S)\inv R$.
\end{prop}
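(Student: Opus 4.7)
The proposition has two assertions: (i) $\LSat(S)$ is a left Ore set in $R$, and (ii) the localizations at $S$ and at $\LSat(S)$ are isomorphic. For (i) I would split the work into three sub-tasks, corresponding to the definition of a left Ore set. The Ore condition for $\LSat(S)$ is essentially free: it follows directly from \Cref{LSat_preserves_and_reflects_left_Ore_condition}, since $S$ itself satisfies the left Ore condition by hypothesis. So the substantive content is to verify that $\LSat(S)$ is a multiplicative set.

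For multiplicativity, note that $1 \in S \subseteq \LSat(S)$, and $0 \notin \LSat(S)$ because any $w \in R$ would give $w\cdot 0 = 0$, and $0 \notin S$ by the multiplicative-set axioms. The only real obstacle is closure under products. The slickest route here is via the freshly established \Cref{characterization_of_units_in_localization}: for $a,b \in \LSat(S)$, both $(1,a)$ and $(1,b)$ are units in $S\inv R$, so their product $(1,a)\cdot(1,b) = (1,ab)$ is a unit, whence $ab \in \LSat(S)$. (A direct argument is also available: picking $u,v \in R$ with $ua,vb \in S$, applying the left Ore condition on $S$ to the pair $(v, ua) \in R \times S$ produces $\tilde{s} \in S$ and $\tilde{r} \in R$ with $\tilde{s}v = \tilde{r}ua$, and then $\tilde{r}u\cdot ab = \tilde{s}\cdot vb \in S$.)

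With (i) established, part (ii) is obtained by appealing to \Cref{characterization_of_R-fixing_isomorphisms_via_LSat}, which requires the equality $\LSat(S) = \LSat(\LSat(S))$. But \Cref{basic_properties_of_LSat_for_N=T=R}\textit{(a)} tells us that $\LSat(S)$ is already left saturated, and part \textit{(b)} of the same lemma then yields $\LSat(\LSat(S)) = \LSat(S)$. Hence the canonical map $\omega:S\inv R \to \LSat(S)\inv R$ of \Cref{omega_lemma} is an $R$-fixing isomorphism, proving the claim. I expect the only delicate step to be the closure under multiplication; everything else is a bookkeeping assembly of prior results.
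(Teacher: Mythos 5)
Your proposal is correct and follows essentially the same route as the paper: the Ore condition comes from \Cref{LSat_preserves_and_reflects_left_Ore_condition}, multiplicative closure is the only substantive verification, and the isomorphism follows from $\LSat(\LSat(S))=\LSat(S)$ together with \Cref{characterization_of_R-fixing_isomorphisms_via_LSat}. For closure under products the paper uses exactly your parenthetical direct argument (apply the left Ore condition to $b\in R$ and $ua\in S$, then multiply by $b$ on the right); your primary units-based argument via \Cref{characterization_of_units_in_localization} is an equally valid variant that reuses the trick of \Cref{LSat_of_Ore_set_is_saturated}.
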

\begin{proof}
	By \Cref{LSat_preserves_and_reflects_left_Ore_condition}, $\LSat(S)$ satisfies the left Ore condition, and by \Cref{basic_properties_of_LSat_for_N=T=R}, $0\notin\LSat(S)$ since $0\notin S$.
	Furthermore, we have $1\in S\subseteq\LSat(S)$.
	Finally, let $x,y\in\LSat(S)$, then there exist $a,b\in R\setminus\set{0}$ such that $ax\in S$ and $by\in S$.
	By the Ore condition on $S$, there exist $\tilde{s}\in S$ and $\tilde{r}\in R$ such that $\tilde{r}ax=\tilde{s}b$.
	Then we have $\tilde{r}axy=\tilde{s}by\in S$ and therefore $xy\in\LSat(S)$, thus $\LSat(S)$ is a multiplicative set.
	Thus, $\LSat(S)$ is a left Ore set in $R$.\\
	Lastly, $S\inv R\cong\LSat(S)\inv R$ follows from $\LSat(S)=\LSat(\LSat(S))$ with \Cref{characterization_of_R-fixing_isomorphisms_via_LSat}.
\end{proof}

At this point we can see that, for every left Ore set $S$, $\LSat(S)$ gives us a saturated left Ore set that describes the same localization up to a unique $R$-fixing isomorphisms.
For theoretical purposes, this allows us to assume without loss of generality that any given left Ore set is already saturated.

The main results of the theory up to this point can thus be summarized as follows:

\begin{thm} %[LSat-Theorem]
	Let $S$ be a left Ore set in a domain $R$.
	\begin{enumerate}[(a)]
		\item
			$\LSat(S):=\set{r\in R\mid\exists w\in R:wr\in S}$ is a left Ore set in $R$.
		\item
			$\LSat(S)=\rho_{S,R}\inv(\units{S\inv R})$, or equivalently, $(s,r)\in\units{S\inv R}$ if and only if $r\in\LSat(S)$.
		\item
			$\LSat(S)$ is the smallest saturated superset of $S$ with respect to inclusion.
		\item
			$S\inv R\cong_R\LSat(S)\inv R$ via a unique $R$-fixing isomorphism.
	\end{enumerate}
\end{thm}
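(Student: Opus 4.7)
The plan is to assemble this summary from the four assertions already established in the preceding sections; the ``proof'' is essentially a bookkeeping exercise, with each part pointing to a single lemma or proposition. Parts (a) and the existence half of (d) are exactly the content of the unnamed proposition immediately preceding the theorem. For part (b), I would note that $\rho_{S,R}(r)=(1,r)$ and then invoke the equivalence of \emph{(2)} and \emph{(3)} in \Cref{characterization_of_units_in_localization}, which states precisely $(1,r)\in\units{S\inv R}\Leftrightarrow r\in\LSat(S)$; this rephrases directly as $\LSat(S)=\rho_{S,R}\inv(\units{S\inv R})$.

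For part (c), I would combine \Cref{basic_properties_of_LSat_for_N=T=R}(c), which gives that $\LSat(S)$ is the smallest \emph{left} saturated superset of $S$, with \Cref{LSat_of_Ore_set_is_saturated}, which shows that under the hypothesis that $S$ satisfies the left Ore condition $\LSat(S)$ is in fact two-sided saturated. The argument then runs: any saturated superset $T\supseteq S$ is in particular left saturated, so $\LSat(S)\subseteq T$ by \Cref{basic_properties_of_LSat_for_N=T=R}; and $\LSat(S)$ is itself a saturated superset of $S$, so it is the minimum of the poset of saturated supersets of $S$.

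For the uniqueness clause of (d), I would appeal to \Cref{characterization_of_R-fixing_isomorphisms_via_LSat}: since $\LSat(S)$ is already left saturated, part (b) of \Cref{basic_properties_of_LSat_for_N=T=R} gives $\LSat(\LSat(S))=\LSat(S)$, so the corollary supplies a unique $R$-fixing ring isomorphism $\omega_{S,\LSat(S),R}:S\inv R\to\LSat(S)\inv R$.

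No step is technically hard, as every nontrivial ingredient has been unpacked in earlier lemmas and propositions. The one mild subtlety lies in (c), where one must carefully distinguish between the minimality statement built into the definition of $\LSat$ (smallest \emph{left} saturated superset) and the stronger claim of the theorem (smallest two-sided saturated superset); bridging the gap is exactly the role played by the left Ore condition via \Cref{LSat_of_Ore_set_is_saturated}. I expect this is the only place a careless reader might misstep.
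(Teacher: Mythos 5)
Your proposal is correct and matches the paper's intent exactly: the theorem is stated as a summary of the preceding results and the paper gives no separate proof, relying on precisely the references you cite (the unnamed proposition before the theorem for (a) and (d), \Cref{characterization_of_units_in_localization} for (b), \Cref{basic_properties_of_LSat_for_N=T=R} together with \Cref{LSat_of_Ore_set_is_saturated} for (c), and \Cref{characterization_of_R-fixing_isomorphisms_via_LSat} for the uniqueness in (d)). Your handling of the left-saturated versus two-sided-saturated distinction in (c) is the right bridge and is exactly where the left Ore hypothesis enters.
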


\section{Examples}

First, we give a detailed proof of the fact that $\Theta_z=\merz{\theta+z+\IZ/p\IZ}$ is a left Ore set in $\mc{D}$ over the field $K$ with $p:=\chara(K)\in\IP\cup\set{0}$ for any $z\in Z(\mc{D})$, the center of $\mc{D}$.
Note that $\Theta_z=\Theta_{z+w}$ for all $z\in Z(\mc{D})$ and $w\in\IZ/p\IZ$.
Here, $\IZ/p\IZ=\IZ$ for characteristic zero, and for $p>0$ we identify $\IZ/p\IZ$ with the prime field of $K$.

\begin{prop}\label{theta_proposition}
	Let $z\in Z(\mc{D})$.
	\begin{enumerate}[(a)]
		\item
			For all $m,n\in\IN_0$ we have $(\theta+z)^mx^n=x^n(\theta+z+n)^m$ and $\partial^n(\theta+z)^m=(\theta+z+n)^m\partial^n$.
			%\label{weyl_theta_commutation}
		\item
			Let $r\in\mc{D}$ be homogeneous of degree $k\in\IZ$.
			Then $(\theta+z+k)r=r(\theta+z)$.
			%\label{weyl_theta_action_of_homogeneous_elements}
		\item
			For all $r\in\mc{D}$ there exist $\tilde{s}\in\Theta_z$ and $\tilde{r}\in\mc{D}$ such that $\tilde{r}(\theta+z)=\tilde{s}r$.
			%\label{weyl_theta_commutation_of_single_element}
	\end{enumerate}
\end{prop}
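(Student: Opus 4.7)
For part (a), the plan is to establish the two base commutations $(\theta+z)x = x(\theta+z+1)$ and $\d(\theta+z) = (\theta+z+1)\d$ by direct computation from $\d x = x\d+1$ and centrality of $z$; for instance,
\[
(\theta+z)x = x\d x + zx = x(x\d+1) + xz = x(\theta+z+1).
\]
A straightforward induction on $n$ extends these to $(\theta+z)x^n = x^n(\theta+z+n)$ and $\d^n(\theta+z) = (\theta+z+n)\d^n$; a second induction on $m$, together with the fact that $\theta+z+c$ for any $c\in K$ lies in the commutative subring $K[\theta]$, yields the full formulas.

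For part (b), the plan is to reduce to monomials. A $K$-basis of the degree-$k$ homogeneous component of $\mc{D}$ consists of the monomials $x^a\d^b$ with $b-a = k$. For such a monomial, applying part (a) twice with $m=1$,
\[
(\theta+z+k)x^a\d^b = x^a(\theta+z+k+a)\d^b = x^a(\theta+z+b)\d^b = x^a\d^b(\theta+z),
\]
where the last equality uses $\d^b(\theta+z) = (\theta+z+b)\d^b$. The claim for a general homogeneous $r$ of degree $k$ then follows by $K$-linearity.

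For part (c), the plan is to exploit the $\IZ$-grading of $\mc{D}$. Write $r = \sum_{k\in I}r_k$ as a finite sum of homogeneous components indexed by some $I\subseteq\IZ$, and set
\[
\tilde{s} := \prod_{k\in I}(\theta+z+\bar{k}) \in \Theta_z,
\]
where $\bar{k}$ denotes the image of $k$ in the prime field of $K$, so each factor is a generator of $\Theta_z$. These factors commute with one another as elements of $K[\theta]$. For each fixed $k\in I$, I bring $(\theta+z+\bar{k})$ next to $r_k$ and invoke part (b) to rewrite $(\theta+z+\bar{k})r_k = r_k(\theta+z)$; the remaining factors $(\theta+z+\bar{k'})$ with $k'\neq k$ are pushed past $r_k$ via the immediate consequence $(\theta+z+c)r_k = r_k(\theta+z+c-k)$ of part (b), contributing only a polynomial in $\theta$ on the left. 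This yields $\tilde{r}_k\in\mc{D}$ with $\tilde{s}r_k = \tilde{r}_k(\theta+z)$, and summing over $k\in I$ gives $\tilde{s}r = \tilde{r}(\theta+z)$ for $\tilde{r} := \sum_k\tilde{r}_k$.

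The main obstacle is part (c): parts (a) and (b) are routine computations, while (c) demands a single element of $\Theta_z$ witnessing the left Ore condition for $\theta+z$ against all graded components of $r$ simultaneously. The decisive idea is to choose $\tilde{s}$ as the product over exactly the degrees appearing in $r$, so that each component $r_k$ is matched by one factor $(\theta+z+\bar{k})$ that, by part (b), produces the required $(\theta+z)$ on the right, while the non-matching factors commute through as harmless scalar shifts.
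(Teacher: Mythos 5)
Your proposal is correct and follows essentially the same route as the paper: the base commutations $(\theta+z)x=x(\theta+z+1)$ and $\partial(\theta+z)=(\theta+z+1)\partial$ with a double induction for (a), reduction to monomials $x^a\partial^b$ with $b-a=k$ for (b), and for (c) the product $\tilde{s}=\prod_k(\theta+z+\bar{k})$ over the degrees occurring in $r$, peeling off the matching factor next to each $r_k$ via (b) — which is exactly the paper's choice of $\tilde{s}$ and $\tilde{r}$. One tiny quibble: your justification that the factors commute because $\theta+z+c\in K[\theta]$ is off in characteristic $p$ (where $z$ may involve $x^p,\partial^p$), but the factors still commute since they are polynomials in $\theta+z$ with central coefficients, so nothing breaks.
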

\begin{proof}
	\begin{enumerate}[(a)]
		\item
			The statement follows by induction on $n$ and $m$ from
%			We have
			\[
				(\theta+z)x
				=\theta x+zx
				=x\partial x+zx
				=x(x\partial+1)+xz
				=x(x\partial+z+1)
				=x(\theta+z+1)
			\]
			and $\partial(\theta+z)=(\theta+z+1)\partial$.
		\item
			Since $r$ is homogeneous of degree $k$, we have a representation $r=\sum_{\substack{(a,b)\in\IN_0^2\\b-a=k}}^{}c_{a,b}x^a\partial^b$.
			Then
			\[\begin{split}
				(\theta+z+k)r
				&=\sum_{\substack{(a,b)\in\IN_0^2\\b-a=k}}^{}c_{a,b}(\theta+z+k)x^a\partial^b
				%=\sum_{\substack{(a,b)\in\IN_0^2\\b-a=k}}^{}c_{a,b}x^a(\theta+z+k+a)\partial^b\\
				%&=\sum_{\substack{(a,b)\in\IN_0^2\\b-a=k}}^{}c_{a,b}x^a\partial^b(\theta+z+k+a-b)
				=\sum_{\substack{(a,b)\in\IN_0^2\\b-a=k}}^{}c_{a,b}x^a\partial^b(\theta+z)
				=r(\theta+z).
			\end{split}\]
		\item
			Let $r=\sum_{i=1}^{n}r_{k_i}$ be the decomposition of $r$ into its homogeneous parts, where $r_{k_i}\in\mc{D}$ has degree $k_i\in\IZ$.
			A short calculation shows that the elements
			%Define
			\[
				\tilde{s}
				:=\prod_{i=1}^{n}(\theta+z+k_i)
				\in\Theta_z
				\quad\text{and}\quad
				\tilde{r}
				:=\sum_{i=1}^{n}\Bigg(\prod_{\substack{j=1\\j\neq i}}^{n}(\theta+z+k_j)\Bigg)r_{k_i}
				\in\mc{D}.
			\]
			satisfy the equation $\tilde{r}(\theta+z)=\tilde{s}r$.\qedhere
	\end{enumerate}
\end{proof}

\begin{prop}\label{Theta_is_left_Ore}
	\begin{enumerate}[(a)]
		\item
			The set $\Theta_z=\merz{\theta+z+\IZ/p\IZ}$ is a left Ore set in $\mc{D}$ for all $z\in Z(\mc{D})$.
		\item
			We have $\LSat(\Theta_0)=\LSat(V)$, where $V=\merz{x,\partial}$.
		\item
		For $z\in K$ such that $z\notin \IZ/p\IZ$, we have $\LSat(\Theta_z)=\Theta_z$.
	\end{enumerate}
	
\end{prop}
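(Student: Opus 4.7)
The plan is to prove the three parts separately, using \Cref{theta_proposition} as the main workhorse and the $\IZ$-grading of $\mc{D}$ (with $\deg x = -1$, $\deg \partial = 1$) for the substantive work in part (c).

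For part (a), I would apply \Cref{theta_proposition}(c) with $z$ replaced by $z+k$ (still in $Z(\mc{D})$ since $k$ lies in the prime field $\IZ/p\IZ \subseteq K \subseteq Z(\mc{D})$) to each generator $\theta+z+k$ of $\Theta_z$. Crucially, $\Theta_{z+k}=\Theta_z$ because shifting by an element of the prime field merely permutes the generating set, so the proposition delivers the left Ore condition at every generator with denominators in $\Theta_z$. An induction on the number of factors of $s = s_n\cdots s_1 \in \Theta_z$ — handling $s_{n-1}\cdots s_1$ by the inductive hypothesis and then absorbing $s_n$ via a second application of the generator-level condition — extends the Ore condition to all of $\Theta_z$.

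For part (b), I would invoke \Cref{sufficient_conditions_for_equality_of_closures} together with \Cref{inclusions_of_multiplicatively_closed_sets_on_generators} to reduce to checking inclusions on generators. The inclusion $V \subseteq \LSat(\Theta_0)$ is immediate since $\partial x = \theta + 1 \in \Theta_0$ and $x\partial = \theta \in \Theta_0$. For $\Theta_0 \subseteq \LSat(V)$, I would lift each class $k \in \IZ/p\IZ$ to an integer representative and apply the commutations of \Cref{theta_proposition}(a): for $k \leq 0$, the identity $\partial^{-k}(\theta+k) = \theta\partial^{-k} = x\partial^{-k+1}$ lies in $V$; for $k \geq 1$, $x^{k-1}(\theta+k) = (\theta+1)x^{k-1} = \partial x^k$ lies in $V$.

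For part (c), only the inclusion $\LSat(\Theta_z) \subseteq \Theta_z$ requires work. Given $wr = s = \prod_i(\theta+z+k_i) \in \Theta_z$, note that $s \in K[\theta] = \mc{D}_0$. Since $\mc{D}$ is a $\IZ$-graded domain, a standard leading-and-trailing-term argument forces both $w$ and $r$ to be homogeneous, say $r \in \mc{D}_k$ and $w \in \mc{D}_{-k}$. For $k > 0$, write $r = f(\theta)\partial^k$ and $w = x^k g(\theta)$; using $x^k\alpha(\theta) = \alpha(\theta-k)x^k$ and $x^k\partial^k = \theta(\theta-1)\cdots(\theta-k+1)$, the product $wr$ would force $s$ to admit the roots $0,1,\ldots,k-1 \in \IZ/p\IZ$, contradicting that the roots of $s$ are $-z-k_i$, all of which lie outside $\IZ/p\IZ$ precisely because $z \notin \IZ/p\IZ$. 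A mirror argument for $k < 0$ uses $\partial^{|k|}x^{|k|} = (\theta+1)\cdots(\theta+|k|)$. Hence $k = 0$, reducing $wr = s$ to a factorization in the UFD $K[\theta]$, and unique factorization identifies $r$ as a scalar multiple of a sub-product of the $(\theta+z+k_i)$'s, placing $r$ in $\Theta_z$ up to a unit (per the paper's convention on omitting units). The main obstacle is this spectral-separation step: the hypothesis $z \notin \IZ/p\IZ$ is exactly what prevents the $\IZ/p\IZ$-valued roots arising from $x^n\partial^n$ or $\partial^n x^n$ from ever appearing among the roots of elements of $\Theta_z$; dropping that hypothesis collapses to the situation of part (b), where $\LSat(\Theta_0)$ is strictly larger than $\Theta_0$.
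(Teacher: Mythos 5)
Your parts (a) and (b) follow the paper's proof essentially verbatim: (a) is the same induction on the number of factors of $s\in\Theta_z$, bootstrapped from \Cref{theta_proposition}(c) applied to each generator after observing $\Theta_{z+k}=\Theta_z$; (b) is the same reduction to generators via \Cref{sufficient_conditions_for_equality_of_closures} and \Cref{inclusions_of_multiplicatively_closed_sets_on_generators}, with only a cosmetic difference in the witness for $k\geq1$ (you use $x^{k-1}(\theta+k)=\partial x^{k}$ where the paper uses $x^{k}(\theta+k)=x\partial x^{k}$; both follow from \Cref{theta_proposition}(a)). Part (c) is where you genuinely diverge. The paper disposes of it by citing two lemmas from \cite{homogfac} --- that a $\IZ$-graded element of $\mc{D}$ factors into graded factors only, and that $\theta$ and $\theta+1$ are the only polynomials irreducible in $K[\theta]$ but reducible in $\mc{D}$ --- and then concludes via unique factorization in $K[\theta]$ that an element of $\Theta_z$ with $z\notin\IZ/p\IZ$ has no factorizations in $\mc{D}$ beyond its commutative ones. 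You instead reprove exactly the needed special case from scratch: your leading/trailing-component argument recovers the graded-factorization lemma, and your computation that a factor of degree $\pm k\neq0$ forces $x^{k}\partial^{k}=\theta(\theta-1)\cdots(\theta-k+1)$ (resp. $\partial^{k}x^{k}=(\theta+1)\cdots(\theta+k)$) to divide $s$, contributing roots in the prime field, is precisely the mechanism hidden behind the paper's appeal to $\theta$ and $\theta+1$ being the exceptional reducible polynomials. Your version is self-contained and makes the role of the hypothesis $z\notin\IZ/p\IZ$ completely explicit, at the cost of length; the paper's is shorter but leans on external results. Both readings of the conclusion are correct modulo the paper's stated convention of omitting units from generating sets of saturation closures, which you rightly flag.
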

\begin{proof}
	\begin{enumerate}[(a)]
		\item
			Let $r\in\mc{D}$, $w_1,w_2\in\IZ/p\IZ$ and consider $s:=(\theta+z+w_1)(\theta+z+w_2)\in\Theta_z$.
			By \Cref{theta_proposition}, there exist $s_2\in\Theta_{z+w_2}=\Theta_z$ and $r_2\in\mc{D}$ such that $s_2r=r_2(\theta+z+w_2)$.
			Again by \Cref{theta_proposition}, there exist $s_1\in\Theta_{z+w_1}=\Theta_z$ and $r_1\in\mc{D}$ such that $s_1r_2=r_1(\theta+z+w_1)$.
			Define $\tilde{s}:=s_1s_2\in\Theta$ and $\tilde{r}:=r_1\in\mc{D}$, then
			\[
				\tilde{r}s
				=r_1(\theta+z+w_1)(\theta+z+w_2)
				=s_1r_2(\theta+z+w_2)
				=s_1s_2r
				=\tilde{s}r.
			\]
			Since every element of $\Theta_z$ has the form $\prod_{i=1}^{n}(\theta+z+w_i)$ for some $n\in\IN_0$ and $w_i\in\IZ/p\IZ$, the statement follows by induction on $n$.
		\item
			By \Cref{sufficient_conditions_for_equality_of_closures} and \Cref{inclusions_of_multiplicatively_closed_sets_on_generators} it suffices to show the inclusions $\theta+\IZ/p\IZ\subseteq\LSat(V)$ and $\set{x,\partial}\subseteq\LSat(\Theta_0)$.
			The latter is obvious from $x\cdot\partial=\theta\in\Theta_0$ and $\partial\cdot x=\theta+1\in\Theta_0$, to see the former let $w\in\IZ/p\IZ$.
			First consider the case $p=0$:
			\begin{description}
				\item[Case $1$:]
					If $w\in\IN_0$, then $x^w(\theta+w)=\theta x^w=x\partial x^w\in V$.
				\item[Case $2$:]
					If $-w\in\IN$, then $\partial^{-w}(\theta+w)=(\theta+w-w)\partial^{-w}=\theta\partial^{-w}=x\partial^{1-w}\in V$.
			\end{description}
			If $p>0$ we can always find $n\in\IN_0$ such that $w=n+p\IZ$ and we get $x^n(\theta+w)=\theta x^n\in V$.
			In each case there exists a $s\in V$ such that $s(\theta+w)\in V$, thus $\theta+w\in\LSat(V)$, which shows $\theta+\IZ/p\IZ\subseteq\LSat_V(V)\subseteq\LSat(V)$.
		\item
			By Lemma 1.12  from \cite{homogfac}, a $\IZ$-graded polynomial factorizes into $\IZ$-graded factors only.
			However, a polynomial which is irreducible in $K[\theta]$ can be reducible in $\mc{D}$.
			Lemma 2.3 from \cite{homogfac} assures us that only $\theta$ and $\theta+1$ are such polynomials.
			
			Now fix $z\notin\IZ/p\IZ$ as in the statement and let $f=\prod_{i}^{}(\theta+z-k_i)\in\Theta_z$ for some $k_i\in\IZ/p\IZ$.
			Since $K[\theta]$ is a UFD, neither $\theta$ nor $\theta+1$ appears as a factor.
			By \cite{homogfac}, no other factorization of $f$ in $\mc{D}$ exists up to permutation.
			
			Note, that for the case $p>0$, $\prod_{k=0}^{p-1}(\theta+z+k) = \theta^p - \theta + z^p - z = x^p \d^p + z^p - z$ is contained in the center $Z(\mc{D})$ and, since $z\notin \IZ/p\IZ$, also $z^p-z \neq 0$.
			Therefore the localization at $\Theta_z$ is isomorphic to a central localization.\qedhere
	\end{enumerate}
\end{proof}

\begin{ex}
	Consider the left Ore set $S:=\merz{\Theta_0\cup\set{\partial-1}}=\merz{(\theta+\IZ)\cup\set{\partial-1}}$ in characteristic zero.
	Makar-Limanov showed in \cite{makar-limanov} that the skew field of fractions of the first Weyl algebra, which is the localization $(\mc{D}\setminus\set{0})\inv\mc{D}$, contains a free subalgebra generated by the elements $(\partial x,1)$ and $(\partial x,1)\cdot(1-\partial,1)$.
	These two elements can also be found in the (smaller) localization $S\inv\mc{D}$.\\
	In contrast to $\LSat(\Theta_0)$, $\LSat(S)$ is inhomogeneous and thus much harder to describe, for example, for all $i\in\IZ$ we have
	\[
		(x\partial+i+1)(x\partial^2-x\partial+(i+2)\partial-i)
		=(\partial-1)(x\partial+i)(x\partial+i+1)\in S,
	\]
	thus $\LSat(S)$ contains $x\partial^2-x\partial+(i+2)\partial-i$, which is irreducible for almost all $i$.
\end{ex}

Finally, we give an example in the commutative setting:

\begin{prop}
	Let $K$ be a field and consider the Laurent polynomial ring $R=K[x,x\inv]$ as well as $S=K[x]\setminus\set{0}$.
	\begin{enumerate}[(a)]
		\item
			$S$ is a multiplicative set in $R$, but not saturated.
		\item
			$\LSat(S)=R\setminus\set{0}$.
	\end{enumerate}
\end{prop}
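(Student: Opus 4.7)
The plan is to verify both parts directly using the Laurent polynomial normal form of elements of $R$.

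For part (a), I would first check that $S = K[x] \setminus \set{0}$ is a multiplicative set in $R$: clearly $1 \in S$ and $0 \notin S$, and since $K[x]$ is a subdomain of $R$, products of nonzero polynomials are again nonzero polynomials, hence stay in $S$. To show $S$ is not saturated, I would use the factorization $1 = x \cdot x\inv$ in $R$: the product lies in $S$ and $x \in S$, but $x\inv \notin K[x]$, hence $x\inv \notin S$. Reading this as $s = x$, $t = x\inv$ violates left saturation ($t$ should be in $S$), and the mirrored factorization $1 = x\inv \cdot x$ violates right saturation analogously.

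For part (b), I would prove both inclusions of $\LSat(S) = R \setminus \set{0}$. The containment $\LSat(S) \subseteq R \setminus \set{0}$ is immediate from \Cref{basic_properties_of_general_LSat}(b), since $0 \notin S$ forces $0 \notin \LSat(S)$. For the reverse inclusion, take any $r \in R \setminus \set{0}$ and express it in normal form $r = x^m p(x)$ with $m \in \IZ$ and $p(x) \in K[x] \setminus \set{0}$, obtained by factoring out the smallest power of $x$ appearing in the Laurent expansion of $r$. If $m \geq 0$ then $r$ is itself a nonzero polynomial, so $r \in S \subseteq \LSat(S)$. If $m < 0$, set $w := x^{-m} \in R$; then $wr = p(x) \in S$, so $r \in \LSat(S)$ by the definition of the saturation closure.

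The argument presents no real obstacle; the only delicate point is isolating the normal form so that $p(x) \in K[x]$ (and not merely in $K[x, x\inv]$), which is handled by choosing $m$ as the least exponent appearing in the Laurent representation of $r$. As an aside, since $R\setminus\set{0}$ is the set of nonzero elements of the domain $R$, the result can be rephrased as saying that $\LSat(S) = \units{R}\cdot S$, consistent with the commutative picture sketched in the introduction.
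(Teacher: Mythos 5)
Your proposal is correct and matches the paper's own argument: both verify non-saturation via the factorization $1=x\cdot x\inv$ and prove part (b) by writing a nonzero Laurent polynomial as a power of $x$ times a nonzero element of $K[x]$. Your closing observation that only units of $R$ are needed as multipliers is also made in the paper, where it is recorded as the stronger statement $\LSat_{U(R)}(S)=R\setminus\set{0}$.
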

\begin{proof}
	\begin{enumerate}[{\itshape(a)}]
		\item
			$K[x]$ is a subring of $R$ and a domain.
			We have $x\cdot x\inv=1\in S$, but $x\inv\notin S$.
		\item
			Let $r\in R\setminus\set{0}$, then there is a representation $r=x^{-k}f$, where $f\in S$ and $k\in\IN_0$.
			Now $x^kr=x^kx^{-k}f=f\in S$ implies $r\in\LSat(S)$.
			Since $x^k\in U(R)$, we also have the stronger property $\LSat_{U(R)}(S)=R\setminus\set{0}$.\qedhere
	\end{enumerate}
\end{proof}

In non-commutative algebra, the \emph{Gel'fand-Kirillov dimension} ($\GKdim$) plays the central role in investigations of rings and modules.
See \cite{krause_lenagan,mcconnell_robson} for the definition and studies of its properties.

The following is a very important result due to Borho and Kraft.
See Theorem 4.4.12 in \cite{krause_lenagan} for the general statement, here we give the version for domains:

\begin{thm}[Borho and Kraft (1976)]\label{BorhoKraft}
Let $A$ be a $K$-algebra, which is a finitely generated domain of finite Gel'fand-Kirillov dimension.
Let $B\subseteq A$ be a $K$-subalgebra such that $\GKdim(A)<\GKdim(B)+1$.
Then $S:=B\setminus\set{0}$ is a left (and right) Ore set in $A$ and $S\inv A\cong_A\Quot(A)$.
Moreover, $\Quot(A)$ is a finite dimensional (left or right) vector space over the division ring $S\inv B=\Quot(B)$.
\end{thm}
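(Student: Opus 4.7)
The plan is to prove the three assertions in order: (i) $S = B \setminus \{0\}$ is a left (and by symmetry right) Ore set in $A$, and similarly $B \setminus \{0\}$ is a left Ore set in $B$ so that $\Quot(B) = S\inv B$ makes sense; (ii) the canonical $A$-fixing map $S\inv A \to \Quot(A)$ is an isomorphism; and (iii) $\Quot(A)$ is finite dimensional over $\Quot(B)$. All three rest on the same dimension-counting technique. Fix finite-dimensional generating $K$-subspaces $1 \in U \subseteq A$ and $1 \in W \subseteq B$ with $W \subseteq U^k$ for some $k \in \IN$, and abbreviate $d := \GKdim(A)$, $e := \GKdim(B)$; the hypothesis reads $d < e + 1$.

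For (i), fix $a \in A \setminus \{0\}$ and $b \in B \setminus \{0\}$ and suppose for contradiction that $Ba \cap Ab = \{0\}$. Since $A$ is a domain, the $K$-linear map
\[
    W^n \oplus U^{kn} \longrightarrow U^{kn+1}, \qquad (w, u) \longmapsto wa - ub,
\]
is injective (a relation $wa = ub$ lies in $Ba \cap Ab = \{0\}$, and $a, b \neq 0$ force $w = u = 0$), giving $\dim W^n \leq \dim U^{kn+1} - \dim U^{kn}$. Summing over $n = 1, \ldots, N$ and using $U^{kn+1} \subseteq U^{k(n+1)}$ to telescope, one obtains $\sum_{n=1}^N \dim W^n \leq \dim U^{k(N+1)} \leq C N^{d+\varepsilon}$ from the GK-growth bound on $A$. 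Monotonicity of $\dim W^n$ then yields $\sum_{n=1}^N \dim W^n \geq (N/2)\dim W^{\lceil N/2 \rceil}$, so $\dim W^{\lceil N/2 \rceil} \leq 2C N^{d-1+\varepsilon}$, forcing $\GKdim(B) \leq d - 1$ and contradicting $e > d-1$. The right Ore condition follows from the mirror argument in $A^{\mathrm{op}}$, and applying the same estimate inside $B$ (via $W^n \oplus W^n \to W^{n+j}$ with $a,b \in W^j$) shows $B \setminus \{0\}$ is a left Ore set in $B$, so $\Quot(B)$ exists.

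For (ii), applying (i) with $B = A$ (valid since $d < d + 1$) shows $A$ is itself a left Ore domain, so $\Quot(A) = (A\setminus\{0\})\inv A$ exists. By \Cref{characterization_of_R-fixing_isomorphisms_via_LSat} the canonical $A$-fixing map $S\inv A \to \Quot(A)$ is an isomorphism if and only if $\LSat(S) = A \setminus \{0\}$ in $A$, i.e., every $a \in A \setminus \{0\}$ admits $w \in A$ with $wa \in B \setminus \{0\}$. If this failed we would have $Aa \cap B = \{0\}$, and the analogous map $U^{kn} \oplus W^n \to U^{kn+1}$, $(u, w) \mapsto ua - w$, would be injective, producing the same GK-growth contradiction. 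For (iii), if $x_1, \ldots, x_m \in S\inv A$ are left $\Quot(B)$-linearly independent, clearing a common left denominator puts them in $A$, where they remain $B$-linearly independent. Choosing $\ell$ with $\{x_i\} \subseteq U^\ell$, the sum $\sum_{i=1}^m W^n x_i$ is direct and contained in $U^{kn+\ell}$, so $m \cdot \dim W^n \leq \dim U^{kn+\ell}$, and the same averaging plus monotonicity trick — combined with the strict inequality $d - e < 1$ — forces a uniform bound on $m$ in terms of the leading GK-growth data of $A$ and $B$.

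The principal obstacle is the averaging step itself. The naked inequality $\dim W^n \leq \dim U^{kn+1} - \dim U^{kn}$ does not directly yield $\GKdim(B) \leq \GKdim(A) - 1$, because $\GKdim$ is defined as a $\limsup$ and the single-step differences of $\dim U^m$ can behave erratically. The remedy, which governs the entire proof, is to sum over $n$ (whereby the right-hand side telescopes) and then use the monotonicity of $\dim W^n$ to pull the left-hand sum down to a clean per-term bound $\dim W^{\lceil N/2 \rceil} \leq C N^{d-1+\varepsilon}$, producing the sharp GK-exponent comparison. The same averaging underlies the finite-rank bound on $m$ in (iii).
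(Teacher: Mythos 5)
First, note that the paper does not prove this theorem itself — it is quoted from Borho and Kraft via Theorem 4.4.12 of Krause--Lenagan — so your attempt must be measured against the classical argument. Your treatment of the Ore property and of $S\inv A\cong_A\Quot(A)$ is correct and genuinely different in route: where the classical proof exploits a failure $Sa\cap Ab=\emptyset$ to build an infinite direct sum $\bigoplus_{i\geq 0}Ba b^i$ and thereby place $n$ independent translates of $W^n$ inside $U^{O(n)}$, you extract the pointwise bound $\dim W^n\leq\dim U^{kn+1}-\dim U^{kn}$ from a single injective linear map and recover the drop in GK-exponent by telescoping and averaging. This works. The only presentational flaw is that $B$ need not be finitely generated, so a generating subspace $W$ of $B$ may not exist; run the estimate for an arbitrary finite-dimensional $1\in W\subseteq B$ and then take the supremum defining $\GKdim(B)$.

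Part (iii), however, has a genuine gap. From $x_1,\dots,x_m\in U^\ell$ you obtain $m\cdot\dim W^n\leq\dim U^{kn+\ell}$, but $\ell$ depends on the chosen independent elements, hence on $m$, and no averaging in $n$ converts this into a bound on $m$ uniform in $\ell$: for fixed $m$ and $\ell$ the inequality is asymptotically vacuous (it only yields $\GKdim(B)\leq\GKdim(A)$), while for small $n$ the constant hidden in $\dim U^{kn+\ell}$ grows with $\ell$. The missing idea is a dichotomy for the chain of left $\Quot(B)$-subspaces $\Quot(B)\subseteq\Quot(B)U\subseteq\Quot(B)U^2\subseteq\cdots$ whose union is $\Quot(A)=\Quot(B)\cdot A$: if $\Quot(B)U^{n+1}=\Quot(B)U^n$ for one $n$, then $\Quot(B)U^{n+2}=\Quot(B)U^{n+1}U\subseteq\Quot(B)U^{n}U=\Quot(B)U^{n+1}$, so the chain stabilizes and $\dim_{\Quot(B)}\Quot(A)<\infty$; otherwise one may choose $y_j\in U^j\setminus\Quot(B)U^{j-1}$ for every $j$, and these are left $B$-independent with the crucial extra property that the $j$-th one lies in $U^j$. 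Then $\bigoplus_{j=0}^{n}W^ny_j\subseteq U^{(k+1)n}$ gives $(n+1)\dim W^n\leq\dim U^{(k+1)n}$ — the number of translates now grows linearly with the degree — forcing $\GKdim(A)\geq\GKdim(B)+1$, the desired contradiction. Without this (or an equivalent device) the finite-dimensionality of $\Quot(A)$ over $\Quot(B)$ is not established.
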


\begin{rem}
Note that the other direction in \Cref{BorhoKraft}, when we drop the assumption of the finite generation of $A$, is wrong in general: let $K$ be of characteristic zero, $B=K\erz{x,\theta\mid\theta x=x(\theta+1)}\subseteq\mc{D}$ the first polynomial shift algebra (which is a Noetherian domain), $T:=K[\theta]\setminus\set{0}$, $A:=T\inv B\cong K(\theta)\erz{x\mid xq(\theta)=q(\theta-1)x\text{ for all }q\in K(\theta)}$.
It is known (by e.~g. \cite{krause_lenagan}) that $\GKdim(B)=2$ and $\GKdim(A)=3$, so the main condition of \Cref{BorhoKraft} is not satisfied.
Nevertheless we have that $S:=B\setminus\set{0}\subseteq A$ is a left (and right) Ore set and $S^{-1}A\cong_A\Quot(A)\cong\Quot(B)$ holds.
Note that $\GKdim(\Quot(B))=\GKdim(\Quot(\mc{D}))=\infty$.

Moreover, for arbitrary natural $n\geq 1$ there are $K$-algebras $B_n\subseteq A_n$ with
$\GKdim(A) = \GKdim(B) + n$, such that $B\setminus\set{0}$ is an Ore set and 
$(B\setminus\set{0})^{-1} A \cong_A\Quot(A)$. 
Examples of such $B_n$ and $A_n$ are, for instance, multivariate versions of the mentioned algebras which are of Gel'fand-Kirillov dimension $2n$ and $3n$ respectively.
\end{rem}

\section{Towards classifying Ore localizations in factorization domains}

Later, we will make use of the following shorthand:

\begin{definition}
	Let $S$ be a left Ore set in a domain $R$.
	We call $S\inv R$ a \emph{saturated} localization if $S$ is saturated.
\end{definition}

\begin{definition}
	Let $R$ be a domain and $r\in R\setminus(\set{0}\cup U(R))$.
	Then $r$ is called
	\begin{itemize}
		\item
			\emph{reducible} if $r$ can be written as the product of two non-units in $R$, that is, there exist $p,q\in R\setminus U(R)$ such that $r=pq$,
		\item
			\emph{irreducible} if $r$ is not reducible.
	\end{itemize}
\end{definition}

\begin{prop}
	Let $S\subseteq R$ be a left Ore set in a domain $R$, $p,q\in R\setminus\set{0}$ and $r=pq$.
	\begin{enumerate}[(a)]
		\item
			We have $\abs{\set{p,q}\cap\LSat(S)}=2$ if and only if $(1,r)\in U(S\inv R)$.
		\item
			If $\abs{\set{p,q}\cap\LSat(S)}=0$, then $(1,r)$ is reducible in $S\inv R$ and $r$ is reducible in $R$.
		\item
			If $(1,r)$ is irreducible in $S\inv R$, then $\abs{\set{p,q}\cap\LSat(S)}=1$.
	\end{enumerate}
\end{prop}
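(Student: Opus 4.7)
The plan is to reduce everything to two facts already established: the characterization of units $(1,r)\in U(S\inv R)\Leftrightarrow r\in\LSat(S)$ from \Cref{characterization_of_units_in_localization}, and the two-sided saturatedness of $\LSat(S)$ from \Cref{LSat_of_Ore_set_is_saturated}. A third ingredient is the multiplication rule of \Cref{thm_construction}: taking $\tilde{s}=1$ and $\tilde{r}=p$ in the product $(1,p)\cdot(1,q)$, the identity $\tilde{s}\cdot p = \tilde{r}\cdot 1$ is trivially satisfied, so $(1,p)\cdot(1,q)=(1,pq)=(1,r)$.

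For part \textit{(a)}, I would argue in one line: by \Cref{characterization_of_units_in_localization}, $(1,r)\in U(S\inv R)$ iff $r=pq\in\LSat(S)$. Since $\LSat(S)$ is saturated on both sides (\Cref{LSat_of_Ore_set_is_saturated}), left saturation gives $q\in\LSat(S)$ and right saturation gives $p\in\LSat(S)$, while the reverse direction follows because $\LSat(S)$ is in particular a multiplicative set, so $pq\in\LSat(S)$. Thus $r\in\LSat(S)$ iff both $p,q\in\LSat(S)$, that is, iff $|\set{p,q}\cap\LSat(S)|=2$.

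For part \textit{(b)}, assume $p,q\notin\LSat(S)$. Using the factorization $(1,r)=(1,p)\cdot(1,q)$ from the multiplication rule above, neither factor lies in $U(S\inv R)$ by \Cref{characterization_of_units_in_localization}, and both factors are nonzero since $p,q\neq 0$; hence $(1,r)$ is reducible in $S\inv R$. For the reducibility of $r$ in $R$ itself, I still need $p,q\notin U(R)$ and $r\notin\set{0}\cup U(R)$. The first follows from $U(R)\subseteq\LSat(S)$ (\Cref{units_are_contained_in_left_saturation_closure}), since $p\notin\LSat(S)$ forces $p\notin U(R)$, and analogously for $q$. For $r$: $r\neq 0$ because $R$ is a domain and $p,q\neq 0$, and $r\notin U(R)$ because $U(R)\subseteq\LSat(S)$ would imply $r\in\LSat(S)$, contradicting part \textit{(a)} applied in the direction already proven. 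Hence $r=pq$ is a factorization of an element of $R\setminus(\set{0}\cup U(R))$ into two non-units, so $r$ is reducible.

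Part \textit{(c)} is then immediate by contraposition: if $(1,r)$ is irreducible in $S\inv R$, then in particular $(1,r)\notin U(S\inv R)$, so by \textit{(a)} we cannot have $|\set{p,q}\cap\LSat(S)|=2$; and by \textit{(b)} we cannot have $|\set{p,q}\cap\LSat(S)|=0$ either, for then $(1,r)$ would be reducible. The only remaining possibility is $|\set{p,q}\cap\LSat(S)|=1$. The only step requiring any genuine care is the verification in \textit{(b)} that $r$ itself is a non-unit in $R$, since this is where the saturatedness of $\LSat(S)$ is used in its full strength; the rest is essentially bookkeeping around the unit characterization.
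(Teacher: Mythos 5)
Your proof is correct and follows essentially the same route as the paper: reduce everything to \Cref{characterization_of_units_in_localization} plus a saturation argument, then get \textit{(c)} by combining \textit{(a)} and \textit{(b)}. The only cosmetic difference is that in \textit{(a)} you invoke the saturatedness of $\LSat(S)$ in $R$ (\Cref{LSat_of_Ore_set_is_saturated}) where the paper invokes the saturatedness of $U(S\inv R)$ (\Cref{unit_group_is_saturated}); since the former is itself derived from the latter, the two arguments are interchangeable.
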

\begin{proof}
	We always have the induced factorization $(1,r)=(1,p)\cdot(1,q)$ in $S\inv R$.
	\begin{enumerate}[(a)]
		\item
			By \Cref{characterization_of_units_in_localization}, $\abs{\set{p,q}\cap\LSat(S)}=2$ is equivalent to $(1,p),(1,q)\in U(S\inv R)$.
			Since $S\inv R$ is a domain this is equivalent to $(1,r)\in U(S\inv R)$ by \Cref{unit_group_is_saturated}.
		\item
			Since $U(R)\subseteq\LSat(S)$ by \Cref{units_are_contained_in_left_saturation_closure}, we have that $r=pq$ is the product of two non-zero non-units of $R$, thus $r$ is reducible in $R$.
			Furthermore we get that both $(1,p)$ and $(1,q)$ are non-zero non-units in $S\inv R$ by \Cref{characterization_of_units_in_localization}, thus their product $(1,r)$ is reducible in $S\inv R$.
		\item
			Follows from combining the previous parts.\qedhere
	\end{enumerate}
\end{proof}

\begin{definition}
	If every non-zero non-unit of a domain $R$ can be written as a product of finitely many irreducible elements we call $R$ a \emph{factorization domain}.
	Two elements $a$ and $b$ in a factorization domain $R$ are \emph{associated} if there exists $u\in\units{R}$ such that $a=ub$.
\end{definition}

\begin{rem}
	All $G$-algebras like the first Weyl algebra $\mc{D}$ are factorization domains.
	In fact, they are even \emph{finite factorization domains}: any non-zero non-unit of a $G$-algebra has at least one but at most finitely many factorizations into irreducible elements (\cite{BHL-FFD}).
\end{rem}

\begin{lemma}
	Let $S$ be a left Ore set in a factorization domain $R$.
	Up to units $\operatorname{LSat}(S)$ is generated by a set of irreducible elements of $R$ that are pairwise not associated.
\end{lemma}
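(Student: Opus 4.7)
The plan is to construct the required generating set explicitly and verify it works using the results already available in the paper. First I would introduce $\mathcal{I}$, the set of all irreducible elements of $R$ that happen to lie in $\LSat(S)$, and then, invoking the axiom of choice, select $G \subseteq \mathcal{I}$ by picking exactly one representative from each equivalence class of $\mathcal{I}$ under the associated relation. By construction, $G$ then consists of irreducible elements that are pairwise not associated, so the remaining task is to verify $\LSat(S) = \merz{G \cup U(R)}$, which is the precise meaning I would attach to being ``generated up to units''.

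The inclusion $\merz{G \cup U(R)} \subseteq \LSat(S)$ is immediate: $G \subseteq \LSat(S)$ by construction, $U(R) \subseteq \LSat(S)$ by \Cref{units_are_contained_in_left_saturation_closure}, and $\LSat(S)$ is multiplicatively closed because it is itself a left Ore set. For the reverse inclusion, take $r \in \LSat(S)$; if $r \in U(R)$ we are done, and otherwise $r$ is a nonzero non-unit, so the factorization domain hypothesis yields $r = q_1 \cdots q_n$ with each $q_i$ irreducible in $R$. The crucial step is to observe that every $q_i$ already lies in $\LSat(S)$: since $\LSat(S)$ is saturated on both sides by \Cref{LSat_of_Ore_set_is_saturated}, applying right saturation to $q_1 (q_2 \cdots q_n) \in \LSat(S)$ gives $q_1 \in \LSat(S)$, while left saturation gives $q_2 \cdots q_n \in \LSat(S)$; iterating the same argument peels off every factor. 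Hence each $q_i$ is an irreducible element of $\LSat(S)$, i.e., $q_i \in \mathcal{I}$, and by our choice of $G$ each $q_i$ is associated to a unique $p_i \in G$, say $q_i = u_i p_i$ with $u_i \in U(R)$. Then $r = u_1 p_1 u_2 p_2 \cdots u_n p_n$ is a multiplicative word in $G \cup U(R)$, proving $r \in \merz{G \cup U(R)}$.

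I do not foresee a serious obstacle in this proof; the only point that requires care is interpretational, and it is genuinely non-commutative in flavour. The argument deliberately does \emph{not} try to shove all the unit factors $u_i$ past the irreducibles $p_i$, since that would in general require a separate Ore-style manoeuvre and is not needed. The weaker assertion that every element of $\LSat(S)$ is a multiplicative word in $G \cup U(R)$ is exactly what the phrase ``generated up to units'' ought to mean in a non-commutative factorization domain, and it is what the two-sided saturation of $\LSat(S)$ combined with the existence of factorizations in $R$ delivers straightforwardly.
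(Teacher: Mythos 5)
Your proof is correct and rests on exactly the same two ingredients as the paper's: the factorization-domain hypothesis to write any element of $\LSat(S)$ as a product of irreducibles, and the two-sided saturatedness of $\LSat(S)$ (\Cref{LSat_of_Ore_set_is_saturated}) to conclude that every irreducible factor already lies in $\LSat(S)$, after which one passes to association-class representatives. The only difference is organizational — you build the generating set up from chosen representatives, while the paper whittles down from $\LSat(S)$ itself by iteratively discarding reducible elements and associated duplicates — and your explicit formulation $\LSat(S)=\merz{G\cup U(R)}$, without commuting the units past the irreducibles, is if anything the more careful rendering of ``generated up to units''.
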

\begin{proof}
	Start with $T:=\operatorname{LSat}(S)$ as a trivial generating set of itself.
	Let $s\in T$ be reducible.
	Since $R$ is a factorization domain $s$ can be factorized into finitely many irreducible elements $r_1,\ldots,r_k$.
	But $\operatorname{LSat}(S)$ is saturated, thus all $r_i$ already belong to $\operatorname{LSat}(S)$ and we can omit $s$ from $T$.
	Iteratively we can remove all reducible elements from $T$ and are left with only irreducible elements and units.
	Similarly, given two associated irreducible elements in $\operatorname{LSat}(S)$ one is a unit multiple of the other.
	Since all units are contained in $T$ we can remove one of the two irreducible elements from $T$ and still retain a generating set.
\end{proof}

\begin{prop}
	Let $R$ be a factorization domain.
	Let $\mathcal{M}$ be a set of representatives of the equivalence classes of irreducible elements in $R$ with respect to associativity.
	Let $\mathcal{S}$ be a set of representatives of the equivalence classes of saturated left Ore sets in $R$ with respect to natural isomorphisms of the induced localizations.
	Then $\varphi:\mathcal{S}\rightarrow\operatorname{Pot}(\mathcal{M}),~S\mapsto S\cap\mathcal{M}$ is an injective map.
\end{prop}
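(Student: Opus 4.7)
The plan is to reduce injectivity of $\varphi$ to the set-theoretic identity: if $S_1, S_2$ are saturated left Ore sets with $S_1 \cap \mathcal{M} = S_2 \cap \mathcal{M}$, then $S_1 = S_2$. This reduction is straightforward: by \Cref{characterization_of_R-fixing_isomorphisms_via_LSat}, two saturated left Ore sets induce $R$-fixing isomorphic localizations if and only if $\LSat(S_1) = \LSat(S_2)$, and saturation forces $\LSat(S_i) = S_i$. Consequently, each equivalence class of saturated left Ore sets under the natural-isomorphism relation is a singleton, and the representative map $\mathcal{S} \hookrightarrow \{\text{saturated left Ore sets}\}$ is a bijection. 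So $\varphi$ is injective iff distinct saturated Ore sets have distinct intersections with $\mathcal{M}$.

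Now fix saturated left Ore sets $S_1, S_2$ with $S_1 \cap \mathcal{M} = S_2 \cap \mathcal{M}$. To each $S_i$ I would apply the preceding lemma, which produces a generating set $T_i \subseteq S_i$ consisting of pairwise non-associated irreducible elements such that every element of $S_i$ is a product of units and elements of $T_i$; in formula, $S_i = \merz{T_i \cup U(R)}$. The key intermediate claim is that $T_i$ and $S_i \cap \mathcal{M}$ coincide up to associativity, and therefore generate the same multiplicative set together with $U(R)$. In one direction, every $t \in T_i$ has a unique $\mathcal{M}$-representative $m_t$ with $m_t = u\, t$ for some $u \in U(R)$, and $m_t \in S_i$ because $S_i \supseteq U(R)$ is multiplicatively closed. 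In the opposite direction, any $m \in S_i \cap \mathcal{M}$ is irreducible and decomposes as $m = u\, t_1 \cdots t_k$ with $u \in U(R)$ and $t_j \in T_i$; irreducibility forces $k = 1$, so $m$ is associated to a (unique) $t_1 \in T_i$.

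From these two directions I would conclude
\[
	S_i = \merz{T_i \cup U(R)} = \merz{(S_i \cap \mathcal{M}) \cup U(R)},
\]
where the second equality uses that replacing each generator by an associated one (i.e., differing by a unit already contained in $U(R)$) does not change the multiplicative closure. Applying this to $i = 1, 2$ and using the hypothesis $S_1 \cap \mathcal{M} = S_2 \cap \mathcal{M}$ yields $S_1 = S_2$, giving injectivity.

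The main obstacle I expect is in the second paragraph: carefully converting the phrase ``generated up to units by a set of irreducible, pairwise non-associated elements'' from the previous lemma into the precise equality $S_i = \merz{(S_i \cap \mathcal{M}) \cup U(R)}$. The subtle point is the back-direction, that every irreducible in $S_i$ must be associated to a generator from $T_i$. This is where irreducibility is essential, and it is the only place where the factorization-domain hypothesis (encoded in the earlier lemma) genuinely enters the argument; the rest is formal manipulation with saturated multiplicatively closed sets.
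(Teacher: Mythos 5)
Your proposal is correct and follows essentially the same route as the paper: the paper's proof is the one-line observation that any saturated left Ore set $S$ equals $\merz{(S\cap\mathcal{M})\cup U(R)}$ (via the preceding lemma), so $S$ is reconstructible from $S\cap\mathcal{M}$. You simply spell out the details the paper leaves implicit — that the equivalence classes in $\mathcal{S}$ are singletons, and that the lemma's generating set of irreducibles agrees with $S\cap\mathcal{M}$ up to associates — which is a faithful elaboration rather than a different argument.
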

\begin{proof}
	Any saturated left Ore set in $R$ is generated by $(S\cap\mathcal{M})\cup U(R)$, thus $S$ can be uniquely reconstructed from $S\cap\mathcal{M}$.
\end{proof}

\begin{rem}
	This induces a bijection between $\mathcal{S}$ and the subsets of $\mathcal{M}$ that generate a saturated left Ore set. %\\
	%\textit{Technically I need the Main Theorem of the LSat paper here since two different sets of irreducibles could generate the same localization up to \underline{arbitrary} isomorphism, thus we have to restrict to equivalence classes of left Ore sets w.r.t. $R$-fixing isomorphisms.}
\end{rem}

\begin{cor}
	Let $R$ be a commutative UFD.
	Let $\mathcal{M}$ be a set of representatives of the equivalence classes of irreducible elements in $R$ with respect to associativity.
	Let $\mathcal{S}$ be a set of representatives of the equivalence classes of saturated multiplicative sets in $R$ with respect to natural isomorphisms of the induced localizations.
	Then there is a bijection $\varphi:\mathcal{S}\rightarrow\operatorname{Pot}(\mathcal{M}),~S\mapsto S\cap\mathcal{M}$.
\end{cor}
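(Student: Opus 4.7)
The plan is to build this corollary on top of the preceding proposition. First, I would note that in a commutative domain every multiplicative set trivially satisfies the Ore condition, so \emph{saturated multiplicative set} and \emph{saturated left Ore set} coincide. Moreover, \Cref{characterization_of_R-fixing_isomorphisms_via_LSat} tells us that for two saturated multiplicative sets $S,T$, an $R$-fixing isomorphism between $S\inv R$ and $T\inv R$ exists iff $\LSat(S)=\LSat(T)$, which for saturated sets just means $S=T$. Thus each equivalence class in $\mathcal{S}$ is a singleton and $\mathcal{S}$ can be identified with the collection of all saturated multiplicative sets in $R$. Injectivity of $\varphi$ is then immediate from the previous proposition.

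The new content is surjectivity. Given any $N\subseteq\mathcal{M}$, my candidate preimage is $S_N:=\merz{N\cup U(R)}$, the multiplicative closure. This is a multiplicative set by construction, contains $1$, and avoids $0$ since $R$ is a domain and $0\notin N\cup U(R)$. It remains to show (i) that $S_N$ is saturated and (ii) that $S_N\cap\mathcal{M}=N$.

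For (i), I would use the UFD property crucially. If $ab\in S_N$, then $ab=u\cdot r_1\cdots r_n$ with $u\in U(R)$ and $r_i\in N$ irreducible. By unique factorization, any irreducible factor of $a$ or of $b$ is associated to some $r_i$, hence to an element of $N$. Replacing each irreducible factor of $a$ (resp.\ $b$) by the corresponding element of $N$ at the cost of a unit exhibits $a,b\in S_N$, so $S_N$ is saturated (in fact on both sides, but one-sided suffices in the commutative case). For (ii), any element of $S_N\cap\mathcal{M}$ is irreducible and has a representation as a unit times a product of elements of $N$; by unique factorization this product must consist of a single factor, so the element is associated to a unique $p\in N\subseteq\mathcal{M}$, and by the representative property of $\mathcal{M}$ the element equals $p$. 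The reverse inclusion $N\subseteq S_N\cap\mathcal{M}$ is immediate, giving $\varphi(S_N)=N$.

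The only mildly delicate point is (i), where one must be careful to invoke uniqueness of factorization at the right spot; everything else is bookkeeping. Once (i) and (ii) are in hand, $\varphi$ is a bijection.
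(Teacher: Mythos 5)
Your proposal is correct and follows essentially the same route as the paper: injectivity comes from the preceding proposition, and surjectivity is obtained by sending $N\subseteq\mathcal{M}$ to $\merz{N\cup U(R)}$ and using unique factorization to verify saturation and $\merz{N\cup U(R)}\cap\mathcal{M}=N$. Your write-up merely spells out the details (including why the classes in $\mathcal{S}$ are singletons) more explicitly than the paper does.
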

\begin{proof}
	Let $M$ be a subset of $\mathcal{M}$, then $S:=\merz{M\cup U(R)}$ is a multiplicative set in $R$ that satisfies $M=S\cap\mathcal{M}$.
	Since any element of $S$ up to associativity factors uniquely only into irreducible elements contained in $M$ we have that $S$ is saturated and thus $S\in\mathcal{S}$.
\end{proof}

Before proceeding with further examples, let us give a definition.

\begin{definition}\label{MaxPreMax}
	Let $S$ be a left Ore set in a left Ore domain $R$.
	We call $S$ 
	\begin{itemize} 
		\item
			\emph{maximal} if $S^{-1}R\cong_R\Quot(R)$,
		\item
			\emph{pre-maximal} if $S$ is not maximal, but any left Ore set $T$ with $\LSat(S)\subsetneq T$ is maximal.
			%strictly bigger left Ore set $S'\supsetneq\LSat(S)$  is maximal.
	\end{itemize} 
\end{definition}

From \Cref{characterization_of_R-fixing_isomorphisms_via_LSat} we get an immediate characterization of maximality:

\begin{prop}\label{characterization_of_maximal_Ore_sets}
	Let $S$ be a left Ore set in a left Ore domain $R$.
	Then the following are equivalent:
	\begin{enumerate}[(1)]
		\item
			$S$ is a {maximal} Ore set.
		\item
			$S^{-1}R\cong_R\Quot(R)$.
		\item
			$\LSat(S)=R\setminus\set{0}$.
	\end{enumerate} 
\end{prop}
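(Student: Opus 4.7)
The proposition essentially asks to characterize when the localization at $S$ recovers the full quotient field, so the plan is to reduce everything to the already-proved \Cref{characterization_of_R-fixing_isomorphisms_via_LSat} by taking the other Ore set to be $R\setminus\set{0}$.

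First, the equivalence of \textit{(1)} and \textit{(2)} is immediate from \Cref{MaxPreMax}, which defines maximality precisely as the condition $S\inv R\cong_R\Quot(R)$. So the real content lies in the equivalence of \textit{(2)} and \textit{(3)}.

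For this, I would set $T:=R\setminus\set{0}$. Since $R$ is a left Ore domain, $T$ is by definition a left Ore set in $R$, and $T\inv R=\Quot(R)$. The key observation is that $T$ is itself already saturated: if $ab\in R\setminus\set{0}$ in a domain, then both $a$ and $b$ are non-zero. Hence by \Cref{basic_properties_of_LSat_for_N=T=R}\textit{(b)} we have $\LSat(T)=T=R\setminus\set{0}$. Now \Cref{characterization_of_R-fixing_isomorphisms_via_LSat} applied to the pair $(S,T)$ yields that $S\inv R\cong_R T\inv R=\Quot(R)$ via an $R$-fixing isomorphism if and only if $\LSat(S)=\LSat(T)=R\setminus\set{0}$, which is exactly the equivalence of \textit{(2)} and \textit{(3)}.

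There is essentially no obstacle here: the proof is a one-line application of the corollary once one notices that $R\setminus\set{0}$ is saturated in any domain. The only thing to double-check is that any ring isomorphism $S\inv R\to\Quot(R)$ witnessing maximality really is $R$-fixing so that the corollary applies; this is guaranteed because both localization maps $\rho_{S,R}$ and $\rho_{T,R}$ are the canonical embeddings of $R$, and the uniqueness built into the corollary means that if some $R$-fixing isomorphism exists then $\omega$ is the unique one.
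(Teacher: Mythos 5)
Your proposal is correct and is precisely the argument the paper intends: the proposition is stated as an immediate consequence of \Cref{characterization_of_R-fixing_isomorphisms_via_LSat} applied with $T=R\setminus\set{0}$, which is saturated in any domain so that $\LSat(T)=T$. Nothing to add.
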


\begin{cor}
	Let $S$ be a left Ore set in a left Ore domain $R$.
	Then the following are equivalent:
	\begin{enumerate}[(1)]
		\item
			$S$ is pre-maximal.
		\item
			$S$ is not maximal and for all non-zero $r\in R\setminus\LSat(S)$, any left Ore set containing both $S$ and $r$ is maximal.
	\end{enumerate}
\end{cor}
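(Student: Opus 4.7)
The plan is to prove the two implications directly, essentially by unfolding the definitions of pre-maximality and maximality, and using the key facts that (i) every left Ore set $T$ has the same saturation closure $\LSat(T)$ which is itself a left Ore set, (ii) $T$ and $\LSat(T)$ give $R$-isomorphic localizations (so $T$ is maximal iff $\LSat(T)$ is), and (iii) $T$ is maximal iff $\LSat(T) = R\setminus\set{0}$ by \Cref{characterization_of_maximal_Ore_sets}.

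For \textit{(1)} $\Rightarrow$ \textit{(2)}: The non-maximality of $S$ is immediate from the definition. Fix a non-zero $r \in R\setminus\LSat(S)$ and a left Ore set $T$ containing both $S$ and $r$. The strategy is to pass to $\LSat(T)$: by monotonicity $\LSat(S) \subseteq \LSat(T)$, and since $r \in T \subseteq \LSat(T)$ but $r \notin \LSat(S)$, the inclusion is strict. By pre-maximality of $S$ applied to the left Ore set $\LSat(T)$, this set is maximal, i.e.\ $\LSat(T) = R\setminus\set{0}$. Since $\LSat(\LSat(T)) = \LSat(T)$, applying \Cref{characterization_of_maximal_Ore_sets} back to $T$ yields that $T$ itself is maximal.

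For \textit{(2)} $\Rightarrow$ \textit{(1)}: Again $S$ is not maximal by hypothesis. Let $T$ be a left Ore set with $\LSat(S) \subsetneq T$; we must show $T$ is maximal. Choose any $r \in T \setminus \LSat(S)$. Since $T$ is multiplicative we have $0 \notin T$, so $r \neq 0$; and $r \in R \setminus \LSat(S)$ by construction. Moreover $S \subseteq \LSat(S) \subsetneq T$, so $T$ contains both $S$ and $r$, and hypothesis \textit{(2)} delivers that $T$ is maximal.

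There is no real obstacle here — the argument is essentially bookkeeping. The only subtle point, and the one step worth pointing out explicitly, is that in the first direction we cannot apply the pre-maximality hypothesis to $T$ directly (we only know $S \subseteq T$, not $\LSat(S) \subsetneq T$), which is why we must first replace $T$ by its saturation $\LSat(T)$ and use the fact that maximality is invariant under left saturation.
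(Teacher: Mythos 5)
Your proof is correct and follows essentially the same route as the paper's: in both directions the key step is to compare $\LSat(S)$ with $\LSat(T)$ and use that a left Ore set is maximal precisely when its left saturation equals $R\setminus\set{0}$. The paper's version is terser (it writes ``$\LSat(S)\subsetneq\LSat(T)$, thus $\LSat(T)$ and therefore $T$ is maximal''), but the saturation-passing step you spell out explicitly is exactly what is implicit there.
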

\begin{proof}
	\begin{description}
		\implication{1}{2}
			Let $r\in R\setminus\LSat(S)$ and $T$ a left Ore set containing both $S$ and $r$.
			Then $\LSat(S)\subsetneq\LSat(T)$, thus $\LSat(T)$ and therefore $T$ is maximal.
		\implication{2}{1}
			Let $T$ be a left Ore set with $\LSat(S)\subsetneq T$, then $T$ contains a non-zero element $r\in R\setminus\LSat(S)$.
			Since $T$ contains $S$ and $r$, $T$ is maximal.\qedhere
	\end{description}
\end{proof}

The following characterization of saturated multiplicative sets in commutative rings is a classical result that has been part of many exercise sheets:

\begin{lemma}[e.g. Exercise 7 in Chapter 3 in \cite{atiyah-macdonald}]
	Let $S$ be a multiplicative set in a commutative ring $R$.
	Then $S$ is saturated if and only if its complement $R\setminus S$ is a union of prime ideals of $R$.
\end{lemma}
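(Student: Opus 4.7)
The plan is to prove the two implications separately, with the forward direction using a Zorn's lemma argument to produce a prime ideal through each element of $R\setminus S$, and the backward direction being a short direct argument.

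For the easy direction, assume $R\setminus S = \bigcup_{i\in I} \mf{p}_i$ where each $\mf{p}_i$ is prime, and suppose $st \in S$. If $s \notin S$, then $s \in \mf{p}_i$ for some $i$, so $st \in \mf{p}_i \subseteq R\setminus S$, contradicting $st \in S$. Hence $s \in S$, and symmetrically $t \in S$, so $S$ is saturated.

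For the harder direction, assume $S$ is saturated and fix $x \in R\setminus S$. First I would note that the principal ideal $(x)$ is disjoint from $S$: if $rx \in S$ for some $r \in R$, then saturation of $S$ would force $x \in S$, contradicting the choice of $x$. I then apply Zorn's lemma to the set of ideals containing $x$ and disjoint from $S$, ordered by inclusion; the union of a chain of such ideals remains an ideal disjoint from $S$ and containing $x$, so a maximal element $\mf{p}_x$ exists. The union $\bigcup_{x \in R\setminus S} \mf{p}_x$ then manifestly equals $R\setminus S$, provided each $\mf{p}_x$ is prime.

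The main obstacle, and the only nontrivial step, is verifying that such a maximal element $\mf{p}_x$ is indeed prime. Suppose $a,b \in R$ with $ab \in \mf{p}_x$ but $a,b \notin \mf{p}_x$. Then by maximality, both $\mf{p}_x + (a)$ and $\mf{p}_x + (b)$ meet $S$, so there exist $s_1 = p_1 + ra$ and $s_2 = p_2 + r'b$ in $S$ with $p_1,p_2 \in \mf{p}_x$ and $r,r' \in R$. Expanding $s_1 s_2$ in the commutative ring $R$, every term except $rr'ab$ lies in $\mf{p}_x$, and $rr'ab \in \mf{p}_x$ as well since $ab \in \mf{p}_x$. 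Hence $s_1 s_2 \in \mf{p}_x \cap S$, contradicting $\mf{p}_x \cap S = \emptyset$ (note $s_1 s_2 \in S$ because $S$ is multiplicatively closed). This shows $\mf{p}_x$ is prime and completes the proof.
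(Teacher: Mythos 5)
Your proof is correct. The paper does not supply its own argument for this lemma --- it is quoted as a classical exercise from Atiyah--Macdonald --- and what you give is the standard direct proof: the easy direction from the ideal property of each $\mf{p}_i$, and the hard direction by taking, for each $x\in R\setminus S$, an ideal containing $x$ that is maximal among ideals disjoint from $S$ (Zorn) and checking primality via the expansion of $s_1s_2=(p_1+ra)(p_2+r'b)$. All steps check out, including the key observation that saturation makes $(x)$ disjoint from $S$, and properness of $\mf{p}_x$ is automatic since $1\in S$. (An alternative, slightly slicker route identifies $S$ with the preimage of $U(S\inv R)$ under the localization map and takes the prime ideals to be the contractions of the maximal ideals of $S\inv R$; this connects the lemma to the paper's \Cref{characterization_of_units_in_localization}, but your elementary argument is equally valid and self-contained.)
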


\begin{prop}\label{characterization_of_pre-maximal_m.c._sets}
	Let $S$ be a multiplicative set in a commutative domain $R$.
	Then the following are equivalent:
	\begin{enumerate}[(1)]
		\item
			$S$ is pre-maximal.
		\item
			$\LSat(S)=R\setminus\mf{p}$ for a prime ideal $\mf{p}$ in $R$ of height 1 (in other words, a prime ideal that is minimal among all non-zero prime ideals).
	\end{enumerate}
\end{prop}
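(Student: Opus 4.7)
The plan is to leverage the classical lemma cited immediately above: in a commutative domain a saturated multiplicative set is precisely the complement of a union of prime ideals. Since every multiplicative set in a commutative ring automatically satisfies the left Ore condition, \Cref{LSat_of_Ore_set_is_saturated} tells us that $\LSat(S)$ is saturated, so we may write
\[
R\setminus\LSat(S)=\bigcup_{i\in I}\mf{p}_i
\]
for some family of primes $\mf{p}_i$. Combined with \Cref{characterization_of_maximal_Ore_sets}, a multiplicative set $T$ is maximal if and only if $R\setminus\LSat(T)=\sset{0}$; this is the bookkeeping device I would use throughout.

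For the direction $(1)\Rightarrow(2)$, the non-maximality of $S$ yields a non-zero element in $R\setminus\LSat(S)$ and hence a non-zero prime $\mf{p}$ among the $\mf{p}_i$. The set $T:=R\setminus\mf{p}$ is a saturated multiplicative (hence Ore) set with $\LSat(S)\subseteq T\subsetneq R\setminus\sset{0}$, so $T$ is not maximal; pre-maximality then forces $T=\LSat(S)$, i.e.\ $\LSat(S)=R\setminus\mf{p}$. To show that $\mf{p}$ has height $1$, I would repeat the same trick: any non-zero prime $\mf{q}\subsetneq\mf{p}$ would produce $R\setminus\mf{q}$ as a strictly larger non-maximal saturated Ore set, again contradicting pre-maximality.

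For $(2)\Rightarrow(1)$, the hypothesis $\mf{p}\neq\sset{0}$ immediately gives $\LSat(S)\neq R\setminus\sset{0}$, so $S$ is not maximal. Given any Ore set $T$ with $\LSat(S)\subsetneq T$, choose some $t\in T\cap\mf{p}$ with $t\neq0$. Decomposing $R\setminus\LSat(T)=\bigcup_{j}\mf{q}_j$ into primes, the inclusion $R\setminus\mf{p}\subseteq\LSat(T)$ forces each $\mf{q}_j\subseteq\mf{p}$, while $t\in\LSat(T)$ excludes $t$ from every $\mf{q}_j$ and hence forces $\mf{q}_j\subsetneq\mf{p}$. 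The height-one hypothesis then yields $\mf{q}_j=\sset{0}$ for every $j$, whence $R\setminus\LSat(T)=\sset{0}$ and $T$ is maximal.

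The principal subtlety is in the forward direction: rather than picking a prime in the complement arbitrarily, one must use pre-maximality to exclude every saturated Ore set strictly between $\LSat(S)$ and $R\setminus\sset{0}$. This simultaneously pins down the complement as a single prime $\mf{p}$ and rules out any non-zero prime sitting strictly below it, delivering the height-one condition essentially for free.
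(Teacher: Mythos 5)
Your proof is correct and follows essentially the same route as the paper's: both directions rest on the classical description of saturated multiplicative sets as complements of unions of primes, the saturatedness of $\LSat(S)$, and the characterization of maximality via $\LSat(\cdot)=R\setminus\set{0}$. The only (harmless) variation is in the forward direction, where you complement the single prime $\mf{p}$ directly and invoke pre-maximality to force $R\setminus\mf{p}=\LSat(S)$, whereas the paper removes $\mf{p}$ from the union to conclude that the remaining primes are trivial — your version even sidesteps the paper's implicit appeal to the strictness of that inclusion.
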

\begin{proof}
	Let $S$ be pre-maximal, then $\LSat(S)\subsetneq R\setminus\set{0}$, thus $R\setminus\LSat(S)=\bigcup_{\mf{q}\in P}^{}\mf{q}$, where $P$ is a non-empty set of non-zero prime ideals.
	Let $\mf{p}\in P$, then $T:=R\setminus\bigcup_{\mf{q}\in P\setminus\set{\mf{p}}}^{}\mf{q}$ is a saturated multiplicative set with $\LSat(S)\subsetneq T$, thus $T=R\setminus\set{0}$ which implies $P=\set{\mf{p}}$ and $R\setminus\LSat(S)=\mf{p}$.
	Since for any prime ideal $\mf{q}$ satisfying $\set{0}\subsetneq\mf{q}\subsetneq\mf{p}$ we would have a chain $\LSat(S)=R\setminus\mf{p}\subsetneq R\setminus\mf{q}\subsetneq R\setminus\set{0}$ of saturated multiplicative set in contradiction to $S$ being pre-maximal we have that $\mf{p}$ must be a prime ideal of height 1.
	
	Now let $\mf{p}:=R\setminus\LSat(S)$ be a prime ideal of height 1.
	Then $\LSat(S)=R\setminus\mf{p}\subsetneq R\setminus\set{0}$, thus $S$ is not maximal.
	Let $T$ be a multiplicative set in $R$ such that $R\setminus\mf{p}=\LSat(S)\subsetneq T\subseteq\LSat(T)$.
	In particular, $R\setminus\LSat(T)$ is a union of prime ideals strictly contained in a prime ideal of height one, which implies $R\setminus\LSat(T)=\set{0}$.
	Therefore we have $\LSat(T)=R\setminus\set{0}$ or equivalently that $T$ is maximal.
	Thus, $S$ is pre-maximal.
\end{proof}

\subsection{Localizations of commutative domains}

\begin{lemma}
	Let $A$ and $B$ subsets of a commutative domain such that $W:=\merz{A\cup B}$ is a multiplicative set.
	Then $S:=\merz{A}$ and $T:=\merz{B}$ are multiplicative sets in $R$, while $\rho_{S,R}(T)$ is a multiplicative set in $S\inv R$.
	Furthermore, the map 
	\[
		\varphi:\rho_{S,R}(T)\inv(S\inv R)\rightarrow W\inv R,\quad
		((1,t),(s,r))\mapsto(st,r)
	\]
	is an isomorphism of rings such that $\varphi\circ\rho_{\rho_{S,R}(T),S\inv R}\circ\rho_{S,R}=\rho_{W,R}$.
\end{lemma}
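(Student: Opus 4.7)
The strategy is direct unwinding: everything in sight is a commutative domain, so the equivalence relations defining the localizations collapse to cross-multiplication. I would proceed in four short steps.

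First, I would verify the multiplicative set claims. Since $0 \notin W$ and $A, B \subseteq W$, the monoid closures $S = \merz{A}$ and $T = \merz{B}$ in the domain $R$ avoid $0$, contain $1$, and are closed under products. Then $\rho_{S,R}(T) = \set{(1,t) \mid t \in T}$ contains $1_{S\inv R} = (1,1)$, avoids $0_{S\inv R} = (1,0)$, and is closed under products because $(1, t_1)(1, t_2) = (1, t_1 t_2)$; commutativity of $S\inv R$ automatically yields the Ore condition.

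Next comes the key step: well-definedness of $\varphi$. Since $S\inv R$ is a commutative domain, two pairs $((1,t_1),(s_1,r_1))$ and $((1,t_2),(s_2,r_2))$ represent the same element of $\rho_{S,R}(T)\inv(S\inv R)$ exactly when $(s_1, t_2 r_1) = (s_2, t_1 r_2)$ in $S\inv R$, which (again by cross-multiplication in the domain $R$) is equivalent to $s_2 t_2 r_1 = s_1 t_1 r_2$. But this is precisely the condition for $(s_1 t_1, r_1) = (s_2 t_2, r_2)$ in $W\inv R$, so the formula gives a well-defined map. Preservation of addition and multiplication is then a short verification using the commutative specializations $(u_1, v_1) + (u_2, v_2) = (u_1 u_2, u_2 v_1 + u_1 v_2)$ and $(u_1, v_1)(u_2, v_2) = (u_1 u_2, v_1 v_2)$ of the formulas in \Cref{thm_construction}.

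Finally, I would address bijectivity and compatibility. Injectivity is immediate from the domain property: $\varphi(((1,t),(s,r))) = (st, r) = 0$ forces $r = 0$, whence the preimage is already zero. For surjectivity, given $(w, r) \in W\inv R$, write $w = c_1 \cdots c_n$ with $c_i \in A \cup B$ and rearrange using commutativity of $R$ to obtain $w = st$ with $s \in \merz{A} = S$ and $t \in \merz{B} = T$, whence $(w, r) = \varphi(((1,t),(s,r)))$. Compatibility is the one-line calculation $\varphi(\rho_{\rho_{S,R}(T), S\inv R}(\rho_{S,R}(r))) = \varphi(((1,1),(1,r))) = (1, r) = \rho_{W,R}(r)$. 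The only step needing a moment of thought is the rearrangement in the surjectivity argument, which is where commutativity of $R$ is essential — without it one would need the Ore condition to separate the factors, and the clean product formula $W = ST$ would fail.
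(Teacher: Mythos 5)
Your proposal is correct and follows exactly the route the paper intends: the paper's own proof consists solely of the remark ``Elementary calculations in commutative localizations,'' and your write-up simply supplies those calculations (cross-multiplication criteria for equality in both localizations, the factorization $W=ST$ via commutativity for surjectivity, and the one-line compatibility check). No gaps; the only point worth stating explicitly is the one you already flag, namely that $0\notin S,T$ follows from $S,T\subseteq W$ and $0\notin W$.
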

\begin{proof}
	Elementary calculations in commutative localizations.
\end{proof}

\begin{cor}
	Let $S$ be a multiplicative set in a commutative domain $R$ and $p\in R\setminus\set{0}$.
	Then
	\[
		\varphi:\merz*{\frac{p}{1}}\inv(S\inv R)
		=\merz{\rho_{S,R}(p)}\inv(S\inv R)
		\rightarrow\merz{S\cup\set{p}}\inv R,\quad
		((1,p^k),(s,r))\mapsto(sp^k,r)
	\]
	is an isomorphism of rings such that $\varphi\circ\rho_{\merz{\frac{p}{1}},S\inv R}\circ\rho_{S,R}=\rho_{\merz{S\cup\set{p}},R}$.
\end{cor}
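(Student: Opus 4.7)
The plan is to invoke the preceding lemma with the specialized choices $A := S$ and $B := \set{p}$, since these produce exactly the objects appearing in the statement. With these choices, $\merz{A} = S$ (as $S$ is already multiplicative by assumption), $\merz{B} = \merz{p} = \set{p^k \mid k\in\IN_0}$, and the combined monoid closure $W := \merz{A\cup B}$ coincides with $\merz{S\cup\set{p}}$, which is precisely the multiplicative set on the right-hand side of the statement.

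First I would verify that the hypothesis of the lemma is satisfied, namely that $W$ is a multiplicative set in $R$. By construction $1_R\in W$ and $W$ is closed under multiplication. Moreover $0\notin W$: since $R$ is a domain, $0\notin S$, and $p\neq 0$, every product of elements from $S$ and non-negative powers of $p$ is non-zero. Hence $W$ is a multiplicative set and the lemma applies.

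Next I would unpack the lemma's output and match it to the corollary. The image $\rho_{S,R}(T) = \rho_{S,R}(\merz{p})$ equals $\set{(1,p^k) \mid k\in\IN_0}$, which is exactly the monoid closure $\merz{\rho_{S,R}(p)} = \merz{\frac{p}{1}}$; this justifies the identification of the two notations in the source of $\varphi$. Specializing the isomorphism $((1,t),(s,r))\mapsto(st,r)$ from the lemma to $t = p^k$ yields exactly $((1,p^k),(s,r)) \mapsto (sp^k,r)$, and the compatibility identity $\varphi\circ\rho_{\merz{p/1},S\inv R}\circ\rho_{S,R} = \rho_{\merz{S\cup\set{p}},R}$ is the direct specialization of the analogous identity furnished by the lemma. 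The argument is pure bookkeeping once the specialization is set up, so I do not expect a genuine obstacle; the only thing worth checking carefully is that the expressions $\merz{\frac{p}{1}}\inv(S\inv R)$ and $\merz{\rho_{S,R}(p)}\inv(S\inv R)$ in the statement really denote the same localization, which we have just done above.
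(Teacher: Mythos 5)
Your proposal is correct and is exactly the intended argument: the paper states this as a corollary of the immediately preceding lemma with no written proof, relying on precisely the specialization $A:=S$, $B:=\set{p}$ that you carry out. Your additional checks (that $W=\merz{S\cup\set{p}}$ contains no zero because $R$ is a domain, and that $\merz{\frac{p}{1}}=\merz{\rho_{S,R}(p)}=\rho_{S,R}(\merz{p})$) are the right bookkeeping details.
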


\begin{rem}
	This means there are two equivalent ways of enlarging a localization by a single element: assume we are given a commutative domain $R$ and a multiplicative subset $S$.
	Furthermore, there is an element $p\in R$ that should additionally become invertible.
	Then it does not matter if we localize $R$ at $\merz{S\cup\set{p}}$ or if we localize $R$ at $S$ and then localize the result again at $\merz{\rho_{S,R}(p)}$.
\end{rem}

\begin{lemma}\label{ideals_induce_maximal_m.c._sets}
	Let $I$ be a non-zero ideal in a commutative domain $R$.
	Then $\hat{I}:=(I\setminus\set{0})\cup\set{1}$ is a maximal multiplicative set.
\end{lemma}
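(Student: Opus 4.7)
The plan is to combine two observations: first, that $\hat{I}$ is genuinely a multiplicative set, and second, that any non-zero element of $R$ can be multiplied into $I\setminus\set{0}$ by a suitable non-zero element of $I$, so that $\LSat(\hat{I})=R\setminus\set{0}$. By \Cref{characterization_of_maximal_Ore_sets} this is exactly maximality (and, since $R$ is commutative, the left Ore condition is automatic, so we are free to speak of the localization).

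First I would check the three axioms for $\hat{I}$ to be a multiplicative set. The element $1_R$ belongs to $\hat{I}$ by construction, and $0_R$ has been explicitly removed. For closure under multiplication, take $a,b\in\hat{I}$; the cases where one of them equals $1$ are trivial, and if $a,b\in I\setminus\set{0}$ then $ab\in I$ because $I$ is an ideal, and $ab\neq 0$ because $R$ is a domain. Thus $ab\in\hat{I}$. (Note that adjoining $1$ explicitly is necessary since $I$ need not contain $1$.)

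Next I would invoke \Cref{characterization_of_maximal_Ore_sets}, which reduces maximality of $\hat{I}$ to the equality $\LSat(\hat{I})=R\setminus\set{0}$. The inclusion $\LSat(\hat{I})\subseteq R\setminus\set{0}$ follows because $0\notin\hat{I}$ (using \Cref{basic_properties_of_general_LSat}(b), with $P=\hat{I}$). For the reverse inclusion, fix any non-zero $r\in R$; since $I$ is a non-zero ideal, choose some $i\in I\setminus\set{0}$. Then $ir\in I$ as $I$ is an ideal, and $ir\neq 0$ since $R$ is a domain, so $ir\in\hat{I}$. This witnesses $r\in\LSat(\hat{I})$ and completes the proof.

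There is no serious obstacle here; the only point to be slightly careful about is to justify that $\hat{I}$ is a left Ore set before invoking \Cref{characterization_of_maximal_Ore_sets}, which is immediate from commutativity of $R$. Everything else is a direct application of the definition of $\LSat$ together with the fact that products of non-zero elements in a domain remain non-zero.
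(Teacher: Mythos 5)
Your proof is correct and follows essentially the same route as the paper's: verify multiplicativity by the same case analysis, then show every non-zero $r\in R$ satisfies $fr\in I\setminus\set{0}\subseteq\hat{I}$ for a fixed non-zero $f\in I$, and conclude via \Cref{characterization_of_maximal_Ore_sets}. The extra remarks (that commutativity gives the Ore condition, and that $0\notin\LSat(\hat{I})$) are fine but the paper leaves them implicit.
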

\begin{proof}
	Let $a,b\in\hat{I}$.
	If $a=1$, then $ab=b\in\hat{I}$.
	If $a\neq1$, then $a\in I$ and thus $ab\in I$.
	Since $R$ is a domain we have $ab\neq0$ and thus $ab\in\hat{I}$.
	Let $f\in I\setminus\set{0}$.
	For all $r\in R\setminus\set{0}$ we have $fr\in I\setminus\set{0}$ and thus $r\in\LSat(\hat{I})$, which implies the maximality of $\hat{I}$ by \Cref{characterization_of_maximal_Ore_sets}.
\end{proof}

\subsection{Localizations of the integers}

The prime numbers $\IP$ are a set of representatives of irreducible elements in the UFD $\IZ$.
Thus the saturated multiplicative sets in $\IZ$ can be identified with subsets of $\IP$ by determining which primes become units in the localization.
A representation of all saturated localizations of $\IZ$ as a binary tree is sketched in \Cref{fig_integer_localizations_binary_tree}.

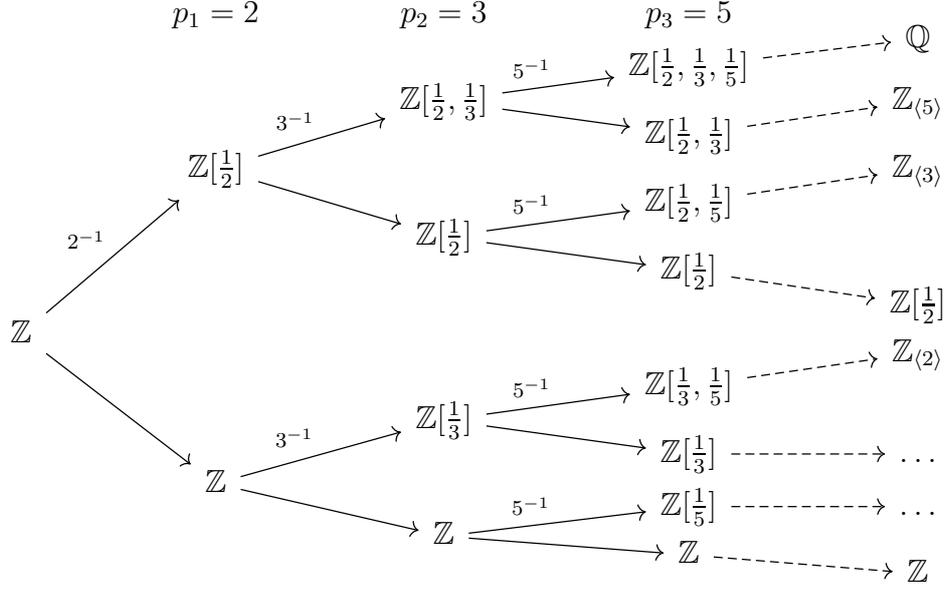
\begin{figure}[ht]
	\[\begin{tikzcd}[row sep = -10pt, column sep = large]
		&p_1=2&p_2=3&p_3=5\\
		&&&&\IQ\\
		&&&\IZ[\frac{1}{2},\frac{1}{3},\frac{1}{5}]\arrow[dashed]{ru}{}\\
		&&\IZ[\frac{1}{2},\frac{1}{3}]\arrow[]{ru}{5\inv}\arrow[]{rd}{}&&\IZ_{\erz{5}}\\
		&&&\IZ[\frac{1}{2},\frac{1}{3}]\arrow[dashed]{ru}{}\\
		&\IZ[\frac{1}{2}]\arrow[]{ruu}{3\inv}\arrow[]{rdd}{}&&&\IZ_{\erz{3}}\\
		&&&\IZ[\frac{1}{2},\frac{1}{5}]\arrow[dashed]{ru}{}\\
		&&\IZ[\frac{1}{2}]\arrow[]{ru}{5\inv}\arrow[]{rd}{}\\
		&&&\IZ[\frac{1}{2}]\arrow[dashed]{rd}{}\\
		&&&&\IZ[\frac{1}{2}]\\
		\IZ\arrow[]{ruuuuu}{2\inv}\arrow[]{rddddd}{}\\
		&&&&\IZ_{\erz{2}}\\
		&&&\IZ[\frac{1}{3},\frac{1}{5}]\arrow[dashed]{ru}{}\\
		&&\IZ[\frac{1}{3}]\arrow[]{ru}{5\inv}\arrow[]{rd}{}\\
		&&&\IZ[\frac{1}{3}]\arrow[dashed]{r}{}&\ldots\\
		&\IZ\arrow[]{ruu}{3\inv}\arrow[]{rdd}{}\\
		&&&\IZ[\frac{1}{5}]\arrow[dashed]{r}{}&\ldots\\
		&&\IZ\arrow[]{ru}{5\inv}\arrow[]{rd}{}\\
		&&&\IZ\arrow[dashed]{rd}{}\\
		&&&&\IZ
%		&&\IZ[\frac{1}{3}]\arrow[]{r}{}&\ldots\\
%		&\IZ\arrow[]{ru}{3\inv}\arrow[]{rd}{}\\
%		&&\IZ\arrow[]{r}{}&\ldots\\
	\end{tikzcd}\]
	\caption{Localizations of $\IZ$ at saturated left Ore sets as a binary tree, where each arrow represents an embedding.
	Each level $i$ corresponds to either making the $i$-th prime invertible or not.}
	\label{fig_integer_localizations_binary_tree}
\end{figure}

Furthermore, from \Cref{characterization_of_R-fixing_homomorphisms_via_LSat} we can see that the saturated localizations of $\IZ$ form a bounded lattice with respect to inclusion, where $\IZ$ is the minimal element and $\IQ$ is the maximal element.
A visualization of a small part of this lattice is given in \Cref{fig_integer_localizations_lattice}.

\begin{figure}[ht]
	\[\begin{tikzcd}[row sep = 0, column sep = large]
		&&\IZ[\frac{1}{2},\frac{1}{5}]\arrow[]{rdd}{3\inv}\\
		&\IZ[\frac{1}{2}]\arrow[]{ru}{5\inv}\arrow[]{rd}{3\inv}\\
		&&\IZ[\frac{1}{2},\frac{1}{3}]\arrow[]{r}{5\inv}&\IZ[\frac{1}{2},\frac{1}{3},\frac{1}{5}]\arrow[dashed]{rd}{}\\
		\IZ\arrow[]{ruu}{2\inv}\arrow[]{r}{3\inv}\arrow[]{rdd}{5\inv}&\IZ[\frac{1}{3}]\arrow[]{ru}{2\inv}\arrow[]{rd}{5\inv}&&&\IQ\\
		&&\IZ[\frac{1}{3},\frac{1}{5}]\arrow[]{ruu}{2\inv}\arrow[]{r}{7\inv}&\IZ[\frac{1}{3},\frac{1}{5},\frac{1}{7}]\arrow[dashed]{ru}{}\\
		&\IZ[\frac{1}{5}]\arrow[]{ru}{3\inv}\\
		%&\vdots\\
	\end{tikzcd}\]
	\caption{Localizations of $\IZ$ at saturated left Ore sets as a bounded lattice with minimum $\IZ$ and maximum $\IQ$, where the partial order is given by embeddings.}
	\label{fig_integer_localizations_lattice}
\end{figure}
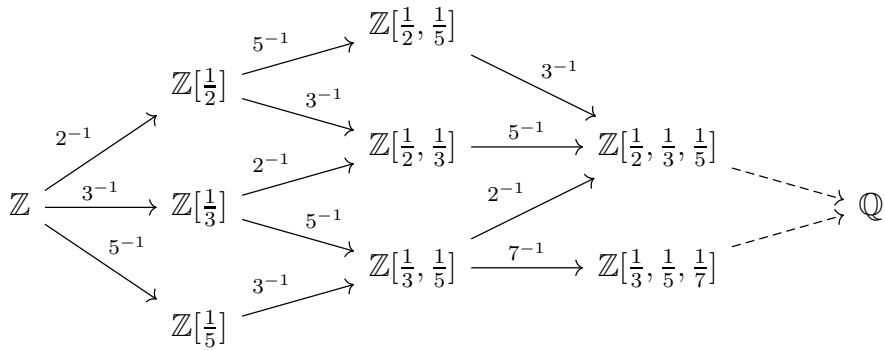

Note that any saturated localization of $\IZ$, which is in bijection with a subset $P\subseteq\IP$, belongs to exactly one of the following three types:
\begin{enumerate}[(i)]
	\item
		$P$ is finite and thus $\IP\setminus P$ is infinite, meaning that only finitely many primes are invertible in the localization.
		\begin{description}
			\item[Special case $P=\emptyset$:]
				This corresponds to the multiplicative set $\units{\IZ}=\set{1,-1}$, which induces the largest localization of $\IZ$ which is still isomorphic to $\IZ$.
			\item[Special case $\abs{P}=1$:]
				These are exactly the smallest non-trivial saturated localizations of $\IZ$, which are given by $\IZ[\frac{1}{p}]:=[p,-1]\inv\IZ\cong_\IZ[p]\inv\IZ$, where $p\in\IP$.
		\end{description}
	\item
		$P$ and $\IP\setminus P$ are infinite.
	\item
		$P$ is infinite and $\IP\setminus P$ is finite, meaning that only finitely many primes are not invertible in the localization.
		\begin{description}
			\item[Special case $\abs{\IP\setminus P}=1$:]
				These are exactly the localizations at pre-maximal multiplicative sets, which by \Cref{characterization_of_pre-maximal_m.c._sets} are given by $\IZ\setminus p\IZ$ for $p\in\IP$.
				The corresponding localizations are thus of the form $\IZ_{\erz{p}}:=(\IZ\setminus p\IZ)\inv\IZ$.
			\item[Special case $P=\IP$:]
				Here we find the localizations at the maximal multiplicative sets in $\IZ$, which include the sets $(n\IZ\setminus\set{0})\cup\set{1}$ where $n\in\IN$.
		\end{description}
\end{enumerate}

Summing up, our machinery shows that the behavior of any localization of the integers depends up to $\IZ$-fixing isomorphisms only on which prime number becomes invertible.

\subsection{Localizations of polynomial rings}

Consider $K[x_1,\ldots,x_n]$ for $n\geq 1$. Recall, that the Gel'fand-Kirillov dimension is considered 
relative to a fixed field, in the rest of our paper we take it to be $K$. 

The crucial difference in the behavior of the Gel'fand-Kirillov dimension over commutative rings to the behavior of the Krull dimension is the following property: for a commutative ring $R$ we have
\[
	\GKdim(R)
	=\sup\set{\GKdim(T)\mid T\text{ is a finitely generated subalgebra of }R}.
\]
Therefore the Gel'fand-Kirillov dimension of a localized commutative ring does not decrease, in particular for a commutative domain and $K$-algebra $R$, $\GKdim(R)=\trdeg_K\Quot(R)$, hence $\GKdim(K)=0$, $\GKdim(\Quot(K[x_1,\ldots,x_n]))=\GKdim(K(x_1,\ldots,x_n))=n$ and thus for any multiplicative set $S$, $\GKdim(S\inv K[x_1,\ldots,x_n])=n$.

The \emph{pre-maximal} sets of $K[x]:=K[x_1,\ldots,x_n]$ are described easily due to \Cref{characterization_of_pre-maximal_m.c._sets}: they are precisely of the form
$K[x]\setminus\mathfrak{p}$, where $\mathfrak{p}$ is a minimal non-zero prime ideal of $K[x]$.

As for \emph{maximal} sets, we have a collection of sets created from taking an ideal and replacing $0$ with $1$ via \Cref{ideals_induce_maximal_m.c._sets}, similarly to the case of $R=\IZ$.
But now there are clearly many more maximal ones: \Cref{BorhoKraft} tells us that
for arbitrary $K$-subalgebra $T\subseteq K[x_1,\ldots,x_n]$,  $\GKdim(T)=n$ implies that $T\setminus\set{0}$ is a maximal Ore set.
Thus for $n=1$ every subalgebra of $K[x]$ deprived of zero except $K$ leads to a maximal Ore set.

\section{Local closure}

While Ore localization of a left module $M$ over a domain $R$ at a left Ore set $S$ can be defined similar to the construction for $S\inv R$ outlined in \Cref{thm_construction}, we introduce the construction in an equivalent but shorter way via a tensor product construction which was inspired by \cite{skoda_2006}:

\begin{definition}
	Let $S$ be a left Ore set in a domain $R$ and $M$ a left $R$-module.
	The \emph{left Ore localization} of $M$ at $S$ is the left $S\inv R$-module $S\inv M:=S\inv R\tensor_RM$.
\end{definition}

Thankfully, when considering elements of $S\inv M$ we only have to look at elementary tensors:

\begin{lemma}[7.3 in \cite{skoda_2006}]
	Let $S$ be a left Ore set in a domain $R$ and $M$ a left $R$-module.
	Every element of $S\inv M$ has a representation $(s,1)\tensor m$ for some $s\in S$ and $m\in M$.
\end{lemma}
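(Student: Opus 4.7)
I would prove this by reducing an arbitrary element of $S\inv M$, which is by construction a finite sum of elementary tensors $\sum_{i=1}^{n}(s_i,r_i)\otimes m_i$, to a single elementary tensor of the desired shape. The plan has three ingredients: moving numerators across the tensor, finding a common left denominator using the Ore condition, and collecting the result into one tensor by $R$-linearity.

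First, for each summand I would exploit that $(s_i,r_i)=(s_i,1)\cdot(1,r_i)=(s_i,1)\cdot\rho_{S,R}(r_i)$ in $S\inv R$. Since the tensor product is balanced over $R$ and the right $R$-action on $S\inv R$ is via $\rho_{S,R}$, this gives $(s_i,r_i)\otimes m_i=(s_i,1)\otimes r_im_i$. Setting $n_i:=r_im_i\in M$, the original element takes the form $\sum_{i=1}^{n}(s_i,1)\otimes n_i$.

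Second, I would put everything over a common denominator. Iterating the left Ore condition (or, equivalently, using that $S$ contains a common left multiple of any finite subset of itself, since $S$ is a left Ore set and $R$ is a domain) I produce $s\in S$ together with elements $x_i\in R$ such that $s=x_is_i$ for every $i$. Then the defining equivalence relation from \Cref{thm_construction} yields $(s_i,1)=(s,x_i)$ in $S\inv R$; indeed, with $\tilde s=\tilde r=1$ we have $1\cdot s=1\cdot x_is_i$ and $1\cdot x_i=1\cdot x_i$. Factoring further, $(s,x_i)=(s,1)\cdot(1,x_i)=(s,1)\cdot\rho_{S,R}(x_i)$, so balancing across the tensor once more gives
\[
	(s_i,1)\otimes n_i=(s,1)\cdot\rho_{S,R}(x_i)\otimes n_i=(s,1)\otimes x_in_i.
\]

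Finally, using bilinearity of the tensor product I collect the summands:
\[
	\sum_{i=1}^{n}(s_i,r_i)\otimes m_i
	=\sum_{i=1}^{n}(s,1)\otimes x_in_i
	=(s,1)\otimes m,
	\quad\text{where }m:=\sum_{i=1}^{n}x_ir_im_i\in M.
\]
The only non-routine step is securing the common denominator $s=x_is_i$; this is a standard consequence of the left Ore condition on $S$, established by induction on $n$ in the same way one clears denominators when adding fractions in $S\inv R$. Everything else is direct manipulation with the tensor product and the explicit equivalence relation of \Cref{thm_construction}.
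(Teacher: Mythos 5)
Your proof is correct and is precisely the standard common-denominator argument that the paper delegates to the cited reference (\v{S}koda, 7.3) without reproducing it. One cosmetic remark: in verifying $(s_i,1)=(s,x_i)$ the witnesses for the equivalence relation should be $\tilde{s}=1$ and $\tilde{r}=x_i$ rather than $\tilde{s}=\tilde{r}=1$ --- or, more directly, this equality is exactly the paper's earlier lemma stating $(s,r)=(ws,wr)$ whenever $ws\in S$, applied with $w=x_i$.
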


Therefore, $S\inv M$ consists of exactly the elements $(s,m):=(s,1)\tensor m$, where $s\in S$ and $m\in M$.
Furthermore, we have $(s,m)=(s,1)\cdot(1,m)$ by the action of $S\inv R$ on $S\inv M$.

Analogously to the localization map $\rho_{S,R}:R\rightarrow S\inv R,~r\mapsto(1,r)$, which embeds a domain $R$ canonically into its localization at $S$, we have the \emph{localization map} for modules:
\[
	\varepsilon_{S,R,M}:M\rightarrow S\inv M,\quad
	m\mapsto(1,1)\tensor m.
\]
This map is a homomorphism of left $R$-modules that is compatible with $\rho_{S,R}$ in the sense that $\varepsilon_{S,R,M}(rm)=\rho_{S,R}(r)\cdot\varepsilon_{S,R,M}(m)$ for all $r\in R$ and $m\in M$.

\begin{rem}
	Let $S$ be a left Ore set in a domain $R$ and $\varphi:M\rightarrow N$ a homomorphism of left $R$-modules.
	Then $S\inv\cdot:=S\inv R\tensor_R\cdot$ becomes an exact covariant functor from the category of left $R$-modules to the category of left $S\inv R$-modules, mapping $M$ to $S\inv M$ and $\varphi$ to
	\[
		S\inv\varphi:S\inv M\rightarrow S\inv N,\quad
		(s,m)\mapsto(s,\varphi(m)).
	\]
\end{rem}

With this notion in mind we turn to another instance of left saturation:

\begin{definition}\label{definition_LSat_in_modules}
	Let $S$ be a left Ore set in a domain $R$ and $P$ a left $R$-submodule of a left $R$-module $M$.
	The $S$-\emph{closure} of $P$ or \emph{local closure} of $P$ at $S$ is defined as
	\[
		P^S
		:=\LSat_S(P)
		=\set{m\in M\mid\exists~s\in S:sm\in P}\supseteq P.
	\]
	%Furthermore, $P$ is called \emph{left} $S$-\emph{closed} if $P=P^S$.
\end{definition}

From \Cref{definition_LSat_in_ring} and \Cref{definition_LSat_in_modules} we get the following:

\begin{lemma}\label{local_closure_is_independent_of_Ore_representative}
	Let $S$ be a left Ore set in a domain $R$ and $P$ a left $R$-submodule of a left $R$-module $M$.
	Then $P^S=P^{\LSat(S)}$.
\end{lemma}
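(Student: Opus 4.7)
The plan is to prove the equality $P^S = P^{\LSat(S)}$ directly from the definitions by verifying both inclusions; no substantial new idea beyond monotonicity of $\LSat$ in its $T$-argument and the defining property of $\LSat(S)$ is required.

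The first inclusion $P^S \subseteq P^{\LSat(S)}$ is immediate: since $1 \in R$, \Cref{basic_properties_of_general_LSat}(a) (applied with $T = R$ to the subset $S \subseteq R$) gives $S \subseteq \LSat(S)$, and $\LSat_T(P)$ is inclusion-preserving in its $T$-argument, so the claim follows at once.

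For the reverse inclusion $P^{\LSat(S)} \subseteq P^S$, I will take an arbitrary $m \in P^{\LSat(S)}$, so there is some $x \in \LSat(S)$ with $xm \in P$. Unpacking the definition of $\LSat(S) = \LSat_R^R(S)$ produces $w \in R$ with $wx \in S$. Here I exploit that $P$ is a left $R$-submodule of $M$: applying $w$ to the element $xm \in P$ keeps us inside $P$, so $(wx)m = w(xm) \in P$, and since $wx \in S$ this exhibits $m \in P^S$.

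The only step that uses more than bookkeeping is the last one, where the submodule property of $P$ is needed to absorb the ring element $w$ into the product; if $P$ were merely an arbitrary subset of $M$, the argument would break. Given that $P$ is assumed to be a submodule, however, I do not anticipate any real obstacle, and the statement is essentially a consequence of unfolding the nested definitions.
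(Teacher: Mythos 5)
Your proof is correct and matches the paper's intent: the paper states this lemma without proof, treating it as an immediate consequence of unfolding Definitions \ref{definition_LSat_in_ring} and \ref{definition_LSat_in_modules}, which is exactly what you do. Your observation that the submodule property of $P$ is the one non-trivial ingredient (to absorb $w$ via $w(xm)=(wx)m\in P$) is accurate and correctly identifies why the statement would fail for arbitrary subsets.
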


There is a strong connection between local closure and the extension-contraction problem with respect to the localization map of the associated localization at $S$.

\begin{definition}\label{definition_extension_contraction_modules}
	Let $R,T$ be rings and $M$ a left $R$-module.
	Additionally, let $\varphi:M\rightarrow N$ a homomorphism of left $R$-modules, where $N$ is also a left $T$-module.
	%Let $R,T$ be rings and $M$ a left $R$-module.
	%Additionally, let $N$ be a left $T$-module that is also a left $R$-module and $\varphi:M\rightarrow N$ a homomorphism of left $R$-modules.
	\begin{itemize}
		\item
			The \emph{extension} of a left $R$-submodule $P$ of $M$ to $T$ with respect to $\varphi$ is $P^e:=\erz{\varphi(P)}$, which is the left $T$-submodule of $N$ that is generated by $\varphi(P)$.
		\item
			The \emph{contraction} of a left $T$-submodule $Q$ of $N$ with respect to $\varphi$ is $Q^c:=\varphi\inv(Q)$, which is a left $R$-submodule of $M$.
	\end{itemize}
\end{definition}

The following result is classical:

\begin{lemma}
	In the situation of \Cref{definition_extension_contraction_modules} we have $P\subseteq(P^e)^c$ and $(Q^c)^e\subseteq Q$.
\end{lemma}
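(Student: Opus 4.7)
The plan is to unfold the two definitions and verify each inclusion directly; this is a routine verification with no genuine obstacle, only a small bit of care about the fact that $P^e$ is defined as a $T$-submodule while $P$ is an $R$-submodule.

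For the first inclusion $P\subseteq(P^e)^c$, I would take an arbitrary $p\in P$ and observe that $\varphi(p)\in\varphi(P)$. Since $P^e=\erz{\varphi(P)}$ is by construction a $T$-submodule of $N$ containing the set $\varphi(P)$, we have $\varphi(p)\in P^e$. By the definition of the contraction, this means $p\in\varphi\inv(P^e)=(P^e)^c$, as desired. No use of $\varphi$ being injective or of any additional structure is required; just that $\varphi(P)$ is contained in the submodule it generates.

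For the second inclusion $(Q^c)^e\subseteq Q$, I would start from $Q^c=\varphi\inv(Q)$, so by the very definition of preimage, $\varphi(Q^c)\subseteq Q$. The key point is that $Q$ is a left $T$-submodule of $N$, hence closed under $T$-linear combinations. Therefore $Q$ contains the left $T$-submodule of $N$ generated by the subset $\varphi(Q^c)$, i.e.\ $(Q^c)^e=\erz{\varphi(Q^c)}\subseteq Q$.

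The only thing worth noting is that for the first inclusion we never need $P$ to be a submodule (merely a subset would suffice), whereas for the second inclusion we crucially use that $Q$ is a $T$-submodule so as to absorb the generated submodule. Both arguments go through verbatim in the concrete setting of interest where $R$ is a domain, $S$ a left Ore set, $T=S\inv R$, $N=S\inv M$ and $\varphi=\varepsilon_{S,R,M}$, so the lemma applies in particular to the extension-contraction situation arising from the localization map.
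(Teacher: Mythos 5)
Your argument is correct and matches the paper's proof essentially verbatim: the first inclusion follows from $P\subseteq\varphi\inv(\varphi(P))\subseteq\varphi\inv(P^e)$, and the second from $\varphi(\varphi\inv(Q))\subseteq Q$ together with $Q$ being a $T$-submodule, so that $\erz{\varphi(Q^c)}\subseteq\erz{Q}=Q$. Your additional remarks on which hypotheses are actually used are accurate but not needed.
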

\begin{proof}
	We have $P\subseteq\varphi\inv(\varphi(P))\subseteq\varphi\inv(P^e)=(P^e)^c$ and $(Q^c)^e=\erz{\varphi(\varphi\inv(Q))}\subseteq\erz{Q}=Q$.
\end{proof}

Now we consider extension and contraction with respect to the localization map:

\begin{lemma}
	Let $S$ be a left Ore set in a domain $R$, $M$ a left $R$-module and $\varepsilon:=\varepsilon_{S,R,M}$.
	\begin{enumerate}[(a)]
		\item
			Let $Q$ be a left $S\inv R$-submodule of $S\inv M$, then $Q=(Q^c)^e$ with respect to $\varepsilon$.
			In particular, any left $S\inv R$-submodule of $S\inv M$ is the extension of a left $R$-submodule of $M$ with respect to $\varepsilon$.
		\item
			Let $P$ be a left $R$-submodule of $M$, then $P^e=S\inv P$ and $(P^e)^c=P^S$ with respect to $\varepsilon$.
			In particular, $P^S$ is a left $R$-submodule of $M$.
	\end{enumerate}
\end{lemma}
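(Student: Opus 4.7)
The plan is to treat the two parts separately: part (a) is essentially formal and reduces to elementary manipulations inside $S\inv R\otimes_R M$, while part (b) requires unwinding the tensor product via the underlying equivalence relation.

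For (a), the inclusion $(Q^c)^e\subseteq Q$ is the general extension-contraction fact from the preceding lemma, so only $Q\subseteq(Q^c)^e$ requires work. For an arbitrary $q\in Q$, the representation lemma cited above (7.3 in Škoda) lets me write $q=(s,1)\otimes m=(s,1)\cdot\varepsilon(m)$ for some $s\in S$ and $m\in M$. Since $Q$ is an $S\inv R$-submodule and $(s,1)$ is invertible in $S\inv R$ with inverse $(1,s)$, I get $\varepsilon(m)=(1,s)\cdot q\in Q$, so $m\in\varepsilon\inv(Q)=Q^c$ and hence $q=(s,1)\cdot\varepsilon(m)\in(Q^c)^e$.

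For (b), I would first show $P^e=S\inv P$, where I identify $S\inv P=\set{(s,1)\otimes p\mid s\in S,p\in P}$ as an $S\inv R$-submodule of $S\inv M$ via the exact functor $S\inv\cdot$ applied to the inclusion $P\hookrightarrow M$. The inclusion $S\inv P\subseteq P^e$ is immediate from $(s,1)\otimes p=(s,1)\cdot\varepsilon(p)\in P^e$; the reverse inclusion holds because $\varepsilon(P)\subseteq S\inv P$ and $S\inv P$ is itself an $S\inv R$-submodule. For the identity $(P^e)^c=P^S$, I need to characterize when $\varepsilon(m)=(1,1)\otimes m$ lies in $S\inv P$, i.e.\ $(1,m)=(s,p)$ in $S\inv M$ for some $s\in S$ and $p\in P$. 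Using the module analog of the equivalence relation in \Cref{thm_construction}, this is equivalent to the existence of $\tilde{s}\in S$ and $\tilde{r}\in R$ with $\tilde{s}s=\tilde{r}$ and $\tilde{s}p=\tilde{r}m$, which forces $(\tilde{s}s)m=\tilde{s}p\in P$ with $\tilde{s}s\in S$, whence $m\in P^S$. The converse uses $\tilde{s}=1$, $\tilde{r}=s$ to realize $\varepsilon(m)=(s,sm)\in S\inv P$ whenever $sm\in P$. The \emph{in particular} then follows since $(P^e)^c$ is the preimage of an $S\inv R$-submodule under the $R$-linear map $\varepsilon$.

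The main obstacle is the second equality in (b): it rests on an explicit description of when two representatives $(s_1,m_1)$ and $(s_2,m_2)$ determine the same element of $S\inv M=S\inv R\otimes_R M$, which is the module analog of \Cref{thm_construction} not spelled out in the excerpt. This can however be derived in the standard way from the tensor product construction together with the left Ore condition on $S$.
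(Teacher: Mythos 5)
Your proposal is correct and follows essentially the same route as the paper's proof: part (a) via writing $q=(s,1)\cdot\varepsilon(m)$ and using that $(s,1)$ is invertible, and part (b) via $P^e=S\inv P$ together with the defining equivalence relation applied to $(1,m)=(s,p)$. The only cosmetic difference is that for $P^e\subseteq S\inv P$ the paper carries out the common-denominator computation explicitly using the Ore condition, whereas you absorb it into the statement that $S\inv P$ is an $S\inv R$-submodule; the module-level equivalence relation you flag as the remaining obstacle is used at exactly the same level of detail (i.e.\ without proof) in the paper itself.
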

\begin{proof}
	\begin{enumerate}[(a)]
		\item
			Let $(s,m)\in Q$, then $\varepsilon(m)=(1,m)=(1,s)\cdot(s,m)\in Q$, thus $m\in\varepsilon\inv(Q)=Q^c$, which implies $\varepsilon(m)\in\varepsilon(Q^c)$.
			Now $(s,m)=(s,1)\cdot(1,m)=(s,1)\cdot\varepsilon(m)\in\erz{\varepsilon(Q^c)}=(Q^c)^e$.
		\item
			Let $(s,p)\in S\inv P$, then $(s,p)=(s,1)\cdot(1,p)=(s,1)\cdot\varepsilon(p)\in\erz{\varepsilon(P)}=P^e$.
			On the other hand, let $x\in P^e$, then $x=\sum_{i=1}^{n}(s_i,r_i)\cdot\ve(p_i)$ for some $s_i\in S$, $r_i\in R$ and $p_i\in P$.
			By the left Ore condition on $S$ there exist $a_i\in R$ and $s\in S$ such that $a_is_i=s$ for all $i$, then
			\[
				x
				=\sum_{i=1}^{n}(s_i,r_i)\cdot\ve(p_i)
				=\sum_{i=1}^{n}(a_is_i,a_ir_i)\cdot(1,p_i)
				=\sum_{i=1}^{n}(s,a_ir_ip_i)
				=\left(s,\sum_{i=1}^{n}a_ir_ip_i\right)
				\in S\inv P,
			\]
			which shows $P^e=S\inv P$.\\
			Let $m\in(P^e)^c$, then $\varepsilon(m)\in P^e=S\inv P$, thus there exist $s\in S$ and $p\in P$ such that $(1,m)=\varepsilon(m)=(s,p)$.
			This in turn implies the existence of $\tilde{s}\in S$ and $\tilde{r}\in R$ such that $\tilde{s}=\tilde{s}\cdot1=\tilde{r}s$ and $\tilde{s}m=\tilde{r}p\in P$, which implies $m\in\LSat_S(P)=P^S$.\\
			Lastly, let $m\in P^S$, then there exists $s\in S$ such that $sm\in P$.
			Now $\varepsilon(m)=(1,m)=(s,sm)\in S\inv P=P^e$, thus $m\in\varepsilon\inv(P^e)=(P^e)^c$.\qedhere
	\end{enumerate}
\end{proof}

The local closure of submodules and ideals is an important object in algebraic analysis, but its computation is notoriously hard.
The multivariate partial differential case corresponding to the $n$-th Weyl algebra over a field and $S=K[x]\setminus\set{0}$ has been thoroughly investigated by Tsai in \cite{tsai_weyl_closure}, culminating in the Weyl closure algorithm.
More recently there have been several papers studying closure in other univariate algebras of operators, but a general algorithm applying to the multivariate case as in the Weyl situation is still unknown.

An important special case of local closure is local torsion, which is the local closure of the trivial submodule:

\begin{definition}
	Let $S$ be a left Ore set in a domain $R$ and $M$ a left $R$-module.
	Then the $S$-\emph{torsion} submodule of $M$ is
	\[
		t_S(M)
		:=\ker(\varepsilon_{S,R,M})
		=\set{m\in M\mid\exists s\in S:sm=0}
		=\LSat_S^M(\set{0}).
	\]
	The module $M$ is called $S$-\emph{torsion-free} if $t_S(M)=\set{0}$ and $S$-\emph{torsion (module)} if $t_S(M)=M$.
\end{definition}

\begin{prop}
	Let $S$ be a left Ore set in a domain $R$ and $M$ a left $R$-module.
	\begin{enumerate}[(a)]
		\item
			$t_S(\cdot)$ is a covariant left-exact functor from the category of left $R$-modules to the category of left $S$-torsion $R$-modules.
			It is functorially isomorphic to $\Tor_1^R(S\inv R/R,\cdot)$.
		\item
			$t_S(M)=t_{\LSat(S)}(M)$.
		\item
			If $M$ is finitely presented via $M\cong R^n/P$, then $t_S(M)=P^S/P$.
			In particular, $M$ is $S$-torsion-free if and only if $P$ is left $S$-saturated.
	\end{enumerate}
\end{prop}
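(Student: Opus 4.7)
For part (a), the plan is to define $t_S(\varphi) : t_S(M) \to t_S(N)$ for a homomorphism $\varphi : M \to N$ as the restriction of $\varphi$; this is well-defined because $sm = 0$ forces $s\varphi(m) = 0$, and functoriality is then routine. For left exactness of $0 \to t_S(M_1) \to t_S(M_2) \to t_S(M_3)$ given an exact $0 \to M_1 \xrightarrow{f} M_2 \xrightarrow{g} M_3$, I would verify it directly: injectivity of the restriction is clear, and if $m \in \ker(t_S(g))$ then $m = f(m_1)$ by exactness and some $s \in S$ annihilates $m$, so injectivity of $f$ yields $sm_1 = 0$ and thus $m_1 \in t_S(M_1)$. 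For the $\Tor$ identification, I would apply $- \tensor_R M$ to the short exact sequence of right $R$-modules $0 \to R \to S\inv R \to S\inv R / R \to 0$. Since $S\inv R$ is flat as a right $R$-module (Ore localization is exact), the long exact sequence collapses to $0 \to \Tor_1^R(S\inv R/R, M) \to M \to S\inv M$, where the right-hand arrow identifies with $\varepsilon_{S,R,M}$ after identifying $R \tensor_R M \cong M$. Hence $\Tor_1^R(S\inv R/R, M) \cong \ker(\varepsilon_{S,R,M}) = t_S(M)$, naturally in $M$.

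For part (b), I would argue directly. The inclusion $t_S(M) \subseteq t_{\LSat(S)}(M)$ is immediate from $S \subseteq \LSat(S)$. For the reverse inclusion, given $x \in \LSat(S)$ with $xm = 0$, pick $w \in R$ with $wx \in S$ by the definition of $\LSat$; then $(wx)m = w(xm) = 0$ and $wx \in S$ show $m \in t_S(M)$. As a sanity check, this is also compatible with the $R$-fixing isomorphism $S\inv R \cong_R \LSat(S)\inv R$ established earlier, since $t_S(M) = \ker(\varepsilon_{S,R,M})$ and the two localization maps factor through isomorphic targets.

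For part (c), I would use that $M = R^n/P$ together with exactness of Ore localization, which yields $S\inv M \cong S\inv R^n / S\inv P$. Writing $\bar r := r + P$, we then have $\bar r \in t_S(M)$ iff $\varepsilon_{S,R,M}(\bar r) = (1,r) + S\inv P$ vanishes, iff $(1,r) \in S\inv P$. Unfolding the latter via the Ore equivalence relation from \Cref{eqrel_characterization} produces $\tilde s \in S$ with $\tilde s \cdot r \in P$, which is exactly the condition $r \in P^S$. This gives $t_S(M) = P^S / P$. The ``in particular'' clause then falls out of \Cref{basic_properties of_LSat_for_qmc}(b), which states that $P = P^S$ is equivalent to $P$ being left $S$-saturated.

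The main technical hurdle is the $\Tor$ identification in (a): one must trace the connecting homomorphism in the long exact sequence and check that it matches $\varepsilon_{S,R,M}$ through the canonical identification $R \tensor_R M \cong M$, as well as verify naturality. The remaining work is largely bookkeeping, and all key ingredients --- flatness of $S\inv R$ as a right $R$-module, exactness of localization, and the dictionary between the tensor description $S\inv M = S\inv R \tensor_R M$ and the explicit fraction description used in (c) --- have already been established earlier in the paper.
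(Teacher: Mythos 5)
Your proposal is correct, and the heart of the proposition --- the identification of $t_S(\cdot)$ with $\Tor_1^R(S\inv R/R,\cdot)$ via the long exact sequence obtained from $0\to R\to S\inv R\to S\inv R/R\to 0$ and the vanishing of $\Tor_1^R(R,M)$ and $\Tor_1^R(S\inv R,M)$ --- is exactly the paper's argument. The differences are in the periphery. For (b) the paper simply cites the earlier lemma $P^S=P^{\LSat(S)}$ applied to $P=\set{0}$, whereas you reprove that lemma directly (pick $w$ with $wx\in S$, so $wx\cdot m=0$); same content, and your version is self-contained. For (c) the paper gives a one-line element computation: $x\in P^S$ with $sx\in P$ gives $s(x+P)=0$ in $R^n/P$, and conversely; you instead route through $S\inv M\cong S\inv R^n/S\inv P$, the kernel description $t_S(M)=\ker(\varepsilon_{S,R,M})$, and the unfolding of $(1,r)\in S\inv P$, which is essentially a reuse of the earlier lemma $(P^e)^c=P^S$. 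Your route is valid but invokes exactness of localization and the extension--contraction machinery where a direct computation with cosets suffices; the paper's version buys brevity, yours makes the link to the localized presentation explicit. Neither part has a gap.
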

\begin{proof}
	\begin{enumerate}[(a)]
		\item
			Since $0\rightarrow R\rightarrow S\inv R\rightarrow S\inv R/R\rightarrow0$ is an exact sequence of $(R,R)$-bimodules, applying the right exact functor $\cdot\tensor_RM$ yields the exact sequence
			\[
				R\tensor_RM\cong M
				\rightarrow S\inv M
				\rightarrow(S\inv R/R)\tensor_RM
				\rightarrow0.
			\]
			Since $\Tor$ is the left derived functor of the tensor product functor, this sequence can be extended to a longer exact sequence as follows:
			\[
				\Tor_1^R(R,M)
				\rightarrow\Tor_1^R(S\inv R,M)
				\rightarrow\Tor_1^R(S\inv R/R,M)
				\rightarrow M
				\rightarrow S\inv M
				\rightarrow\ldots
			\]
			Now $\Tor_1^R(R,M)=\Tor_1^R(S\inv R,M)=0$ since $R$ is a free $R$-module and $S\inv R$ is flat.
			Thus
			\[
				0
				\rightarrow\Tor_1^R(S\inv R/R,M)
				\rightarrow M
				\rightarrow S\inv M
			\]
			is exact, which implies $\Tor_1^R(S\inv R/R,M)\cong\ker(\varepsilon_{S,R,M})=t_S(M)$.
		\item
			With \Cref{local_closure_is_independent_of_Ore_representative} we get $t_S(M)=\set{0}^S=\set{0}^{\LSat(S)}=t_{\LSat(S)}(M)$.
		\item
			For any $x\in P^S$ with $s\in S$ such that $sx\in P$ we have $s(x+P)=sx+sP\subseteq P$, thus $P^S/P\subseteq t_S(M)$.
			On the other hand, for any $x+P\in t_S(M)$ with $s\in S$ such that $s(x+P)\subseteq P$ we have $sx\in P$ and thus $t_S(M)\subseteq P^S/P$.\qedhere
	\end{enumerate}
\end{proof}

\section{Iterated closures}

In this section we describe how to split the complicated problem of computing a closure with respect to $\merz{S\cup T}$ into several possibly easier instances of computing closures with respect to $S$ and $T$.

\begin{lemma}\label{iterated_closure}
	Let $S$ be a multiplicative set in a domain $R$, $M$ a left $R$-module and $P$ a left $R$-submodule of $M$.
	Further, let $\set{S_i}_{i\in I}$ be a family of multiplicative sets in $R$ such that $S=\merz{\bigcup_{i\in I}^{}S_i}$.
	Then $P$ is left $S$-saturated if and only if $P$ is left $S_i$-saturated for all $i\in I$.
\end{lemma}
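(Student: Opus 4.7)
The plan is to establish each direction by unwinding the definition of $T$-saturation, using that every element of $S$ is a finite product of elements from the $S_i$.

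For the forward direction, I would note that if $P$ is left $S$-saturated and $i \in I$ is arbitrary, then $S_i \subseteq S$: indeed, each $s_i \in S_i$ lies in $\bigcup_{j \in I} S_j \subseteq [\bigcup_{j \in I} S_j] = S$. Hence for any $s_i \in S_i$ and $m \in M$ with $s_i m \in P$, we also have $s_i \in S$, so left $S$-saturation of $P$ gives $m \in P$. Thus $P$ is left $S_i$-saturated.

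For the reverse direction, assume $P$ is left $S_i$-saturated for every $i \in I$, and let $s \in S$ and $m \in M$ with $sm \in P$. By definition of $[\bigcup_{i \in I} S_i]$, there exist $n \in \IN_0$, indices $i_1, \dots, i_n \in I$, and elements $s_{i_k} \in S_{i_k}$ such that $s = s_{i_1} s_{i_2} \cdots s_{i_n}$ (with the empty product being $1_R$). I would then proceed by induction on $n$: the case $n = 0$ gives $s = 1$ and $m = sm \in P$ immediately. For the inductive step, write $sm = s_{i_1}(s_{i_2} \cdots s_{i_n} m) \in P$; since $P$ is left $S_{i_1}$-saturated and $s_{i_1} \in S_{i_1}$, it follows that $s_{i_2} \cdots s_{i_n} m \in P$, and the induction hypothesis applied to the shorter product yields $m \in P$. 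This shows $P$ is left $S$-saturated.

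The argument is essentially bookkeeping, and I do not expect any real obstacle. The only subtlety worth stating explicitly is that the empty product case ($n = 0$, $s = 1_R$) must be included, since the multiplicative closure $[\bigcup_i S_i]$ is defined to contain $1_R$; without this, one would need a separate argument to handle $1_R \in S$. Since $P \subseteq M$ is a submodule (hence closed under the trivial scalar $1_R$), this case is immediate, so the induction goes through cleanly.
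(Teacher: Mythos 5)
Your proof is correct and follows essentially the same route as the paper's: the forward direction uses $S_i\subseteq S$, and the reverse direction is an induction on the number of factors $s_{i_1}\cdots s_{i_n}$ of an element of $S$. The only cosmetic difference is that the paper phrases both directions via the closure operator $P^T=\LSat_T(P)$ and its monotonicity rather than unwinding the definition of saturation directly.
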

\begin{proof}
	If $P=P^S$, then $P\subseteq P^{S_i}\subseteq P^S=P$ implies $P=P^{S_i}$ for all $i$.
	Now let $P=P^{S_i}$ for all $i$ and $m\in P^S$, then there exists $w\in S$ such that $wm\in P$.
	By definition, $w=w_{i_1}w_{i_2}\cdot\ldots\cdot w_{i_k}$ for some $i_j\in I$ and $w_{i_j}\in S_{i_j}$.
	Since $P=P^{S_{i_1}}$, from $w_{i_1}w_{i_2}\cdot\ldots\cdot w_{i_k}m\in P$ we get $w_{i_2}\cdot\ldots\cdot w_{i_k}m\in P$.
	By induction we get $m\in P$ and thus $P=P^S$.
\end{proof}

\begin{cor}
	Let $S$ be a multiplicative set in a domain $R$, $M$ a left $R$-module and $P$ a left $R$-submodule of $M$.
	Further, let $S_1,\ldots,S_k$ be multiplicative sets in $R$ such that $S=\merz{\bigcup_{i=1}^{k}S_i}$ and $S_iS_j=S_jS_i$ for all $i,j$.
	Define $P_0:=P$ and $P_i:=P_{i-1}^{S_i}$ for $i\in\nset{k}$, then $P^S=P_k$.
\end{cor}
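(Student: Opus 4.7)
The plan is to use \Cref{iterated_closure} as the main reduction: a left $R$-submodule is left $S$-saturated if and only if it is left $S_i$-saturated for every $i$. With this in hand I can show the two inclusions $P_k \subseteq P^S$ and $P^S \subseteq P_k$ separately.

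For $P_k \subseteq P^S$, I would induct on $i$. The base case $P_0 = P \subseteq P^S$ follows from \Cref{basic_properties_of_general_LSat}(a). For the induction step, observe that $S_i \subseteq \merz{\bigcup_j S_j} = S$, so $P^S$ is automatically $S_i$-saturated; combining this with $P_{i-1} \subseteq P^S$ and monotonicity of $\LSat_{S_i}$ yields $P_i = \LSat_{S_i}(P_{i-1}) \subseteq \LSat_{S_i}(P^S) = P^S$.

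For $P^S \subseteq P_k$, I would argue that $P_k$ itself is left $S$-saturated; combined with $P \subseteq P_k$ this forces $P^S \subseteq P_k$ by the minimality part \Cref{basic_properties of_LSat_for_qmc}(c). By \Cref{iterated_closure}, checking $S$-saturation of $P_k$ reduces to checking $S_i$-saturation for each individual $i$. The key tool is \Cref{basic_properties_of_general_LSat}(e), namely $\LSat_S(\LSat_T(Q)) = \LSat_{TS}(Q)$, which iterated on $k$ yields
\[
	P_k = \LSat_{S_k}(\cdots\LSat_{S_1}(P)) = \LSat_{S_1 S_2 \cdots S_k}(P).
\]
Given a fixed index $i$, I would invoke the pairwise commutativity $S_j S_{j+1} = S_{j+1} S_j$ and bubble-sort the product of subsets of $R$ to move $S_i$ into the rightmost position, obtaining $S_1 \cdots S_k = T_i \cdot S_i$ for $T_i := S_1 \cdots S_{i-1} S_{i+1} \cdots S_k$. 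Splitting the product apart via (e) in the reverse direction then exhibits $P_k = \LSat_{S_i}(\LSat_{T_i}(P))$, which is left $S_i$-saturated by \Cref{basic_properties of_LSat_for_qmc}(a), completing the argument.

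The only piece of real bookkeeping is the set-theoretic rearrangement $S_1 \cdots S_k = T_i \cdot S_i$, which rests on the fact that $A\,S_j S_{j+1}\,B = A\,S_{j+1} S_j\,B$ for arbitrary subsets $A, B$ of $R$ whenever $S_j S_{j+1} = S_{j+1} S_j$; iterating this swap shifts $S_i$ through the product one position at a time. This is the only step that requires genuine verification beyond the formal manipulation of the $\LSat$ operator provided by the results of \Cref{basic_properties_of_general_LSat} and \Cref{basic_properties of_LSat_for_qmc}.
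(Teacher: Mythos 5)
Your proposal is correct and follows essentially the same route as the paper: both arguments rest on \Cref{basic_properties_of_general_LSat}(e) together with the hypothesis $S_iS_j=S_jS_i$ to commute the closure operators, and on \Cref{iterated_closure} plus minimality to identify the result with $P^S$. Your write-up merely makes explicit the induction that the paper leaves implicit, by collapsing $P_k$ to $\LSat_{S_1\cdots S_k}(P)$ and bubble-sorting $S_i$ to the right.
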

\begin{proof}
	If $S_iS_j=S_jS_i$ then $\LSat_{S_i}(\LSat_{S_j}(P))=\LSat_{S_j}(\LSat_{S_i}(P))$, which is contained in $P^S$ as well as both left $S_i$-saturated and left $S_j$-saturated.
	The statement then follows by induction.
\end{proof}

In order to compute the closure with respect to a union of non-commutating left Ore sets, much more work needs to be done:

\begin{prop}\label{nc_iterated_closure}
	Let $R$ be a domain, $M$ a left $R$-module and $P$ a left $R$-submodule of $M$.
	Further, let $\set{S_i}_{i\in I}$ be a countable family of left Ore sets in $R$, $S:=\merz{\bigcup_{i\in I}^{}S_i}$ and $f:\IN\rightarrow I$ an indexing map.
	Define $P_0:=P$ and $P_n:=\LSat_{S_{f(n)}}(P_{n-1})$ for all $n\in\IN$.
	\begin{enumerate}[(a)]
		\item
			For all $n\in\IN$, $P_n=\LSat_{S_{f(1)}S_{f(2)}\cdot\ldots\cdot S_{f(n)}}(P)$.
		\item
			For all $n\in\IN$, 
			%$P\subseteq P_n\subseteq\LSat_S(P)$ and 
			$\LSat_S(P_n)=\LSat_S(P)$.
		\item
			Let $\ell,m\in\IN_0$ such that $\set{f(\ell+j)\mid j\in\nset{m}}=I$ and $P_\ell=P_{\ell+m}$, then $P_\ell=\LSat_S(P)$.
		\item
			Let $I=\nset{k}$, $S_iS_j=S_jS_i$ for all $i,j\in I$ and $f(I)=I$, then $P_k=\LSat_S(P)$.
	\end{enumerate}
\end{prop}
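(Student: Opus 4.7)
The proof splits by part, with each building on the previous. For (a) I would induct on $n$: the base case is the definition, and the inductive step
$P_n = \LSat_{S_{f(n)}}(P_{n-1}) = \LSat_{S_{f(n)}}\left(\LSat_{S_{f(1)}\cdots S_{f(n-1)}}(P)\right) = \LSat_{S_{f(1)}\cdots S_{f(n)}}(P)$
uses the composition identity from \Cref{basic_properties_of_general_LSat}(e). For (b), the product $T_n := S_{f(1)}\cdots S_{f(n)}$ lies inside $S$ because each $S_{f(i)} \subseteq S$ and $S$ is multiplicatively closed; so by (a) and monotonicity of $\LSat$ in its subscript we get $P \subseteq P_n \subseteq \LSat_S(P)$, and \Cref{sufficient_conditions_for_equality_of_closures}(c) delivers $\LSat_S(P_n) = \LSat_S(P)$.

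The main work is in (c). The key observation is that the sequence $(P_n)$ is monotonically increasing, so the single equality $P_\ell = P_{\ell+m}$ forces the whole sandwiched chain to collapse into $P_\ell = P_{\ell+1} = \cdots = P_{\ell+m}$. Each intermediate equality $P_{\ell+j} = P_{\ell+j-1}$ expresses that $P_\ell = P_{\ell+j-1}$ already agrees with its own $S_{f(\ell+j)}$-closure, hence is left $S_{f(\ell+j)}$-saturated by \Cref{basic_properties of_LSat_for_qmc}(b). Since the indices $\{f(\ell+1), \ldots, f(\ell+m)\}$ exhaust $I$, $P_\ell$ is left $S_i$-saturated for every $i \in I$, and \Cref{iterated_closure} upgrades this to left $S$-saturatedness. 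Combined with (b), this yields $P_\ell = \LSat_S(P_\ell) = \LSat_S(P)$.

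For (d) I would bypass (c) and invoke (a) directly. The hypothesis $f(I) = I$ with $I$ finite makes $f|_I$ a permutation of $\{1,\ldots,k\}$, and commutativity $S_iS_j = S_jS_i$ implies $S_{f(1)}\cdots S_{f(k)} = S_1\cdots S_k$ as sets (any product can be reordered using commutativity). A routine check then shows $S_1\cdots S_k = \merz{\bigcup_{i\in I}S_i} = S$: sort any word in the generators by index using commutativity, collapse consecutive same-index factors via multiplicative closure within each $S_i$, and insert $1 \in S_j$ to fill in missing indices. Plugging this into (a) yields $P_k = \LSat_{S}(P)$. The main technical obstacle is the chain-collapse step in (c): turning the single hypothesis $P_\ell = P_{\ell+m}$ into left $S_i$-saturatedness for \emph{every} $i \in I$ requires both the monotonicity of $(P_n)$ and the index-covering hypothesis; everything else reduces to bookkeeping with the identities from \Cref{basic_properties_of_general_LSat}.
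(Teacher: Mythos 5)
Your proof is correct. Parts (a)--(c) coincide with the paper's argument: induction via the composition identity from \Cref{basic_properties_of_general_LSat} for (a), the sandwich $P\subseteq P_n\subseteq\LSat_S(P)$ combined with \Cref{sufficient_conditions_for_equality_of_closures} for (b), and for (c) the collapse of the increasing chain between $P_\ell$ and $P_{\ell+m}$, which makes $P_\ell$ left $S_{f(\ell+j)}$-saturated for every $j$ and hence, by \Cref{iterated_closure}, left $S$-saturated.

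In part (d) you genuinely diverge. The paper argues at the level of closure operators: from $S_iS_j=S_jS_i$ it gets $\LSat_{S_i}(\LSat_{S_j}(P))=\LSat_{S_j}(\LSat_{S_i}(P))$, reorders the nested closures so that $P_k$ is visibly left $S_i$-saturated for each $i$, and then applies \Cref{iterated_closure} a second time. You instead prove the set identity $S_{f(1)}\cdots S_{f(k)}=S_1\cdots S_k=\merz{\bigcup_{i\in I}S_i}=S$ (sorting a word by adjacent swaps, which the set equality $S_iS_j=S_jS_i$ licenses, absorbing repeated indices into each multiplicatively closed $S_i$, and padding missing indices with $1$), and then read off $P_k=\LSat_S(P)$ directly from part (a). Both arguments hinge on the same commutation hypothesis; yours buys a shorter path that bypasses \Cref{iterated_closure} entirely in (d) and isolates a reusable fact about products of pairwise commuting multiplicative sets, whereas the paper's operator-level version stays within the formalism used throughout the section and avoids the combinatorial bookkeeping on words in the generators. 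Either way the conclusion stands.
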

\begin{proof}
	\begin{enumerate}[(a)]
		\item
			Follows via induction, since by \Cref{basic_properties_of_general_LSat} we have
			\[
				P_2
				=\LSat_{S_{f(2)}}(P_1)
				=\LSat_{S_{f(2)}}(\LSat_{S_{f(1)}}(P))
				=\LSat_{S_{f(1)}S_{f(2)}}(P).
			\]
		\item
			Since $S_{f(1)}S_{f(2)}\cdot\ldots\cdot S_{f(n)}\subseteq S$ we have
			\[
				P
				\subseteq P_n
				=\LSat_{S_{f(1)}S_{f(2)}\cdot\ldots\cdot S_{f(n)}}(P)
				\subseteq\LSat_S(P)
			\]
			and thus $\LSat_S(P_n)=\LSat_S(P)$ by \Cref{sufficient_conditions_for_equality_of_closures}.
		\item
			By assumption, for any $i\in I$ there is a $j\in\nset{m}$ such that $f(\ell+j)=i$.
			Then $P_\ell\subseteq P_{\ell+j}\subseteq P_{\ell+m}=P_\ell$, thus $P_\ell=P_{\ell+j}=\LSat_{S_i}(P_{\ell+j-1})$ is left $S_i$-saturated.
			By \Cref{iterated_closure} we have $P_\ell=\LSat_S(P_\ell)=\LSat_S(P)$.
		\item
			If $S_iS_j=S_jS_i$, then
			\[
				\LSat_{S_i}(\LSat_{S_j}(P))
				=\LSat_{S_jS_i}(P)
				=\LSat_{S_iS_j}(P)
				=\LSat_{S_j}(\LSat_{S_i}(P))
			\]
			by \Cref{basic_properties_of_general_LSat}, so by induction we have that $P_k$ is left $S_i$-saturated for all $i\in I$.
			From \Cref{iterated_closure} we get $P_k=\LSat_S(P_k)=\LSat_S(P)$.\qedhere
	\end{enumerate}
\end{proof}

Note that the ``termination condition'' given in part (c) can only be satisfied in the case of a finite index set $I$.

\begin{ex}
	Let $S_x:=\merz{x}$, $S_{\partial}:=\merz{\partial}$ and $S:=\merz{S_x\cup S_{\partial}}$ in $\mc{D}$. In \Cref{Theta_is_left_Ore} we have seen that $\LSat(S)=\merz{(\theta+\IZ)\cup\set{x,\partial}}$.
	The theory, developed by Tsai \cite{tsai_weyl_closure} applies to the closure of a module with respect to $S_x$ and, as we will see below, also with respect to $S_{\partial}$.
	But no method, known up to now, is able to compute the closure with respect to $S$.
	This is now possible thanks to \Cref{nc_iterated_closure}.
	
	Let $L$ be the left ideal generated by $\partial\cdot(x\partial+3)\cdot(3x\partial+1)\cdot(x+\partial)$. From the considerations above, it is clear that $\erz{(3x\partial+1)\cdot(x+\partial)}\subseteq L^S$.
	In order to prove the equality we need to perform computations.
	Let us denote by $\tau:\mc{D}\rightarrow\mc{D}$ the automorphism defined by $x\mapsto-\d$ and $\d\mapsto x$, which is called the \emph{Fourier transform}.
	
	By \cite{tsai_weyl_closure} we can compute closures with respect to $S_x$, and concrete computations show that $L^{S_x}=L$, even though the leading term of the generator is $3x^2\cdot \d^4$.
	Then, since $S_{\partial} = \tau(S_x)$, we obtain with further computations that 
	\[
		L_1
		:=L^{S_{\partial}}
		=\tau({\tau(L)}^{S_x})
		=\erz{
			3x^{4}\d^{2}+(3x^{5}+x^{3})\d+4x^{4},
			3x^{2}\d^{3}+(3x^{3}+13x)\d^{2}+(19x^{2}+3)\d+16x
		}.
	\]
	Then $L_2 := L_1^{S_x} = \langle 3x\d^2+(3x^2+1)\d+4x \rangle = \langle (3x\d+1)(\d+x) \rangle$
	and $L_2^{S_{\partial}} = L_2$ shows that $L_2$ is the closure $L^S$.
\end{ex}

This approach of breaking denominator sets into more easily manageable subsets motivates the following partial classification of standard building blocks, which are inspired by the most common localizations in the world of commutative algebra.

\begin{definition}\label{localization_types}
	Let $S$ be a left denominator set in a ring $R$.
	Then $S$ (and by extension, the localization $S\inv R$) might belong to one (or multiple) of the following types.
	\begin{description}
		\item[Monoidal:]
			$S$ is generated as a multiplicative monoid by at most countably many elements.
		\item[Geometric:]
			$S=T\setminus\mf{p}$ for a commutative subring $T$ of $R$ with $\mf{p}$ being a prime ideal in $T$.
		\item[Rational:]
			$S\cup\set{0}$ is a subring of $R$.
	\end{description}
\end{definition}

\begin{ex}\label{example_types}
	Consider the following localizations of the first Weyl algebra $\mc{D}$, which are all subrings of the rational localization $\Quot(\mc{D})=(\mc{D}\setminus\set{0})\inv\mc{D}$:
	\begin{itemize}
		\item
			Taking $S=\merz{x}$ leads to the monoidal localization
			\[
				\merz{x}\inv\mc{D}
				\cong K\erz{x,x\inv,\partial\mid\partial x=x\partial+1,\partial x\inv=x\inv\partial+x^{-2},xx\inv=x\inv x=1},
			\]
			which can be seen as a Weyl algebra with Laurent polynomial coefficients.
		\item
			Let $p\in K$ and consider the maximal ideal $\mf{m}_p:=\erz{x-p}$ in the commutative subring $K[x]$ of $\mc{D}$.
			The geometric localization
			\[
				(K[x]\setminus\mf{m}_p)\inv\mc{D}
				\cong K[x]_{\mf{m}_p}\erz{\partial\mid\partial f=f\partial+\frac{\text{d}f}{\text{d}x}\text{ for all }f\in K[x]_{\mf{m}_p}\subseteq K(x)}
			\]
			is a so-called \emph{local (algebraic) Weyl algebra}, which is of importance in $D$-module theory.
		\item
			Rational localization at $K[x]\setminus\set{0}$ allows us to pass from the polynomial Weyl algebra $\mc{D}$ to the rational Weyl algebra
			\[
				(K[x]\setminus\set{0})\inv\mc{D}
				\cong K(x)\erz*{\partial\mid\partial f=f\partial+\frac{\text{d}f}{\text{d}x}\text{ for all }f\in K(x)}.
			\]
	\end{itemize}
	Note, that all of algebras above are of Gel'fand-Kirillov dimension 2 over any field.
	These examples can be easily generalized for multivariate Weyl algebras.
\end{ex}

An implementation of basic arithmetic in Ore-localized $G$-algebras is available for certain special cases of the types defined in \Cref{localization_types} in the library \texttt{olga.lib} for the computer algebra system \textsc{Singular:Plural} (\cite{Plural}), including the localizations in \Cref{example_types}.
Details can be found in \cite{HL_ISSAC17}.

\section{Conclusion}

The theory of \emph{left saturation} developed in this paper yields insight into two important applications of Ore localizations that seem to have little in common at first glance.

Applied to a left Ore set $S$ in a (possibly non-commutative) domain $R$ it serves as the \emph{canonical description} of the localization $S\inv R$ up to a unique isomorphism and thus reveals the underlying structure: $\LSat(S)$ is a saturated left Ore superset of $S$ that describes all units in $S\inv R$ such that $S\inv R$ is canonically isomorphic to $\LSat(S)\inv R$.
We have shown at some classical commutative examples how this machinery can be used to classify all localizations of a given domain.

Furthermore, left saturation sheds new light onto the problem of \emph{local closure}, i.e. of localizing a submodule and then contracting it back again.
We have given a general algorithmic approach to split the computation of a local closure into several easier computations, which allows us to tackle important problems that were not computable before.

\section{Acknowledgements}

The authors are grateful to Vladimir Bavula (Sheffield) and Eva Zerz (Aachen) for fruitful discussions.

The second author has been supported by Project II.6 of SFB-TRR 195 ``Symbolic Tools in Mathematics and their Applications'' of the German Research Foundation (DFG).

Both authors are partially supported by Research Training Group 1632 ``Experimental and Constructive Algebra'' of the DFG.

\newpage
\appendix

\section{Proof of \texorpdfstring{\Cref{omega_lemma}}{Lemma 6.2}}\label{appendix_omega_lemma}

\omegalemma*
\begin{proof}
	Let $(s,r),(s_1,r_1),(s_2,r_2)\in S\inv R$.
	\begin{description}
		\item[Independence of the choice of the $w_s$:]
			Let $(s_1,r_1)=(s_2,r_2)$, then there exist $s_a\in S$ and $r_a\in R$ such that
			\begin{align}
				s_as_2
				&=r_as_1\label{eq:mon_module_LSat_incl_well_1},\\
				s_ar_2
				&=r_ar_1\label{eq:mon_module_LSat_incl_well_2}.
			\end{align}
			Furthermore, let $w_1,w_2\in R$ such that $w_1s_1,w_2s_2\in T$.
			We have to show that
			\[
				(w_1s_1,w_1r_1)
				=(w_2s_2,w_2r_2)
			\]
			in $T\inv R$.
			By the left Ore condition on $T$ there exist $t_b\in T$ and $r_b\in R$ such that
			\begin{equation}
				t_bw_2s_2
				=r_bw_1s_1\label{eq:mon_module_LSat_incl_well_3}.
			\end{equation}
			By the left Ore condition on $S$ there exist $s_c\in S$ and $r_c\in R$ such that
			\begin{equation}
				s_ct_bw_2
				=r_cs_a\label{eq:mon_module_LSat_incl_well_4}.
			\end{equation}
			Since $s_c\in S\subseteq\LSat(T)$ there exists $w_c\in R$ such that $w_cs_c\in T$.
			Now
			\[
				w_cs_cr_bw_1s_1
				\overset{\eqref{eq:mon_module_LSat_incl_well_3}}{=}
					w_cs_ct_bw_2s_2
				\overset{\eqref{eq:mon_module_LSat_incl_well_4}}{=}
					w_cr_cs_as_2
				\overset{\eqref{eq:mon_module_LSat_incl_well_1}}{=}
					w_cr_cr_as_1
			\]
			implies
			\begin{equation}
				w_cs_cr_bw_1
				=w_cr_cr_a\label{eq:mon_module_LSat_incl_well_5},
			\end{equation}
			since $s_1\neq0$ and $R$ is a domain.
			Define $\mathring{t}:=w_cs_ct_b\in T$ and $\mathring{r}:=w_cs_cr_b\in R$, then
			\[
				\mathring{t}w_2s_2
				=w_cs_ct_bw_2s_2
				\overset{\eqref{eq:mon_module_LSat_incl_well_3}}{=}
					w_cs_cr_bw_1s_1
				=\mathring{r}w_1s_1
			\]
			and
			\[
				\mathring{t}w_2m_2
				=w_cs_ct_bw_2m_2
				\overset{\eqref{eq:mon_module_LSat_incl_well_4}}{=}
					w_cr_cs_am_2
				\overset{\eqref{eq:mon_module_LSat_incl_well_2}}{=}
					w_cr_cr_am_1
				\overset{\eqref{eq:mon_module_LSat_incl_well_5}}{=}
					w_cs_cr_bw_1m_1
				=\mathring{r}w_1m_1
			\]
			show that $(w_1s_1,w_1m_1)=(w_2s_2,w_2m_2)$.
		\item[Neutral element/$R$-fixing:]
			Let $r\in R$.
			Since $1_R\cdot1_R=1_R\in T$ we have
			\[
				\omega(\rho_{S,R}(r))
				=\omega((1_R,r))
				=(1_R\cdot1_R,1_R\cdot r)
				=(1_R,r)
				=\rho_{T,R}(r).
			\]
			In particular, we have $\omega(1_{S\inv R})=\omega(\rho_{S,R}(1_R))=\rho_{T,R}(1_R)=1_{T\inv R}$.
		\item[Additivity:]
			%Let $(s_1,r_1),(s_2,r_2)\in S\inv R$, then
			We have
			\[
				\omega((s_1,r_1)+(s_2,r_2))
				=\omega((s_as_1,s_ar_1+r_ar_2))
				=(w_as_as_1,w_as_ar_1+w_ar_ar_2)
			\]
			and
			\[
				\omega((s_1,r_1))+\omega((s_2,r_2))
				=(w_1s_1,w_1r_1)+(w_2s_2,w_2r_2)
				=(t_bw_1s_1,t_bw_1r_1+r_bw_2r_2),
			\]
			where $s_a\in S$, $r_a,r_b,w_a,w_1,w_2\in R$ and $t_b\in T$ satisfy
			\begin{equation}
				s_as_1
				=r_as_2,\label{eq:LSat_inclusion_add_1}
			\end{equation}
			$w_as_as_1\in T$, $w_1s_1\in T$, $w_2s_2\in T$ and
			\begin{equation}
				t_bw_1s_1
				=r_bw_2s_2.\label{eq:LSat_inclusion_add_2}
			\end{equation}
			We have to show $(w_as_as_1,w_as_ar_1+w_ar_ar_2)=(t_bw_1s_1,t_bw_1r_1+r_bw_2r_2)$.
			By the left Ore condition on $T$ there exist $\mathring{t}\in T$ and $\mathring{r}\in R$ such that
			\[
				\mathring{t}t_bw_1s_1
				=\mathring{r}w_as_as_1,
			\]
			which implies
			\begin{equation}
				\mathring{t}t_bw_1
				=\mathring{r}w_as_a\label{eq:LSat_inclusion_add_4},
			\end{equation}
			since $s_1\neq0$ and $R$ is a domain.
			Now
			\[
				\mathring{t}r_bw_2s_2
				\overset{\eqref{eq:LSat_inclusion_add_2}}{=}\mathring{t}t_bw_1s_1
				\overset{\eqref{eq:LSat_inclusion_add_4}}{=}\mathring{r}w_as_as_1
				\overset{\eqref{eq:LSat_inclusion_add_1}}{=}\mathring{r}w_ar_as_2
			\]
			implies
			\begin{equation}
				\mathring{t}r_bw_2
				=\mathring{r}w_ar_a\label{eq:LSat_inclusion_add_5},
			\end{equation}
			since $s_2\neq0$ and $R$ is a domain.
			Now
			\[\begin{split}
				\mathring{t}(t_bw_1r_1+r_bw_2r_2)
				&=\mathring{t}t_bw_1r_1+\mathring{t}r_bw_2r_2
				\overset{\eqref{eq:LSat_inclusion_add_4}}{=}
					\mathring{r}w_as_ar_1+\mathring{t}r_bw_2r_2\\
				&\overset{\eqref{eq:LSat_inclusion_add_5}}{=}
					\mathring{r}w_as_ar_1+\mathring{r}w_ar_ar_2
				=\mathring{r}(w_as_ar_1+w_ar_ar_2)
			\end{split}\]
			shows that $(w_as_as_1,w_as_am_1+w_ar_am_2)=(t_bw_1s_1,t_bw_1m_1+r_bw_2m_2)$, thus $\omega$ is additive.
		\item[Multiplicativity:]
			%Let $(s_1,r_1),(s_2,r_2)\in S\inv R$.
			We have
			\[
				\omega((s_1,r_1)\cdot(s_2,r_2))
				=\omega((s_as_1,r_a,r_2))
				=(w_as_as_1,w_ar_ar_2)
			\]
			and
			\[
				\omega((s_1,r_1))\cdot\omega((s_2,r_2))
				=(w_1s_1,w_1r_1)\cdot(w_2s_2,w_2r_2)
				=(t_bw_1s_1,r_bw_2r_2)
			\]
			where $s_a\in S$, $r_a,r_b,w_a,w_1,w_2\in R$ and $t_b\in T$ satisfy
			\begin{equation}
				s_ar_1
				=r_as_2,\label{eq:LSat_inclusion_mult_1}
			\end{equation}
			$w_as_as_1\in T$, $w_1s_1\in T$, $w_2s_2\in T$ and
			\begin{equation}
				t_bw_1r_1
				=r_bw_2s_2.\label{eq:LSat_inclusion_mult_2}
			\end{equation}
			We have to show $(w_as_as_1,w_ar_ar_2)=(t_bw_1s_1,r_bw_2r_2)$ in $T\inv R$.
			By the left Ore condition on $T$ there exist $\mathring{t}\in T$ and $\mathring{r}\in R$ such that
			\[
				\mathring{t}t_bw_1s_1
				=\mathring{r}w_as_as_1,
			\]
			which implies
			\begin{equation}
				\mathring{t}t_bw_1
				=\mathring{r}w_as_a\label{eq:LSat_inclusion_mult_4},
			\end{equation}
			since $s_1\neq0$ and $R$ is a domain.
			Now
			\[
				\mathring{t}r_bw_2s_2
				\overset{\eqref{eq:LSat_inclusion_mult_2}}{=}
					\mathring{t}t_bw_1r_1
				\overset{\eqref{eq:LSat_inclusion_mult_4}}{=}
					\mathring{r}w_as_ar_1
				\overset{\eqref{eq:LSat_inclusion_mult_1}}{=}
					\mathring{r}w_ar_as_2
			\]
			implies
			\begin{equation}
				\mathring{t}r_bw_2
				=\mathring{r}w_ar_a\label{eq:LSat_inclusion_mult_5},
			\end{equation}
			since $s_2\neq0$ and $R$ is a domain.
			Now
			\[
				\mathring{t}r_bw_2r_2
				=t_cr_bw_2r_2
				\overset{\eqref{eq:LSat_inclusion_mult_5}}{=}
					r_cw_ar_ar_2
				=\mathring{r}w_ar_ar_2
			\]
			shows that $(w_as_as_1,w_ar_ar_2)=(t_bw_1s_1,r_bw_2r_2)$.
		\item[Injectivity:]
			Let $0=\omega((s,r))=(ws,wr)$, where $w\in R$ satisfies $ws\in T$, in particular, we have $w\neq0$.
			Now $wr=0$ and $w\neq0$ imply $r=0$ and thus $(s,r)=0$, since $R$ is a domain.\qedhere
	\end{description}
\end{proof}

\newpage

\nocite{*}
\bibliography{literatur}
\bibliographystyle{plain}

\end{document}